\documentclass[12pt, twoside, fleqn, a4paper]{article}

\usepackage{latexsym}  
\usepackage{amscd}    
\usepackage{theorem}  
\usepackage{pifont}  
\usepackage{mathbbol} 
\usepackage{amsfonts}  
\usepackage{xspace}  
\usepackage{amssymb} 
\usepackage{fancyhdr} 
\usepackage{mathrsfs} 
\usepackage{amsmath} 
\usepackage{graphics} 
\usepackage{graphicx} 
\usepackage{euscript}
\usepackage{psfrag}
\usepackage{empheq} 
\usepackage{mathabx} 
\usepackage[parfill]{parskip}     
\usepackage[colorlinks=true, allcolors=blue]{hyperref}
\usepackage{cancel} 
\usepackage{relsize} 

\title{A multiplicative ergodic theorem in \\ random and simultaneous dynamics}
\author{Thirupathi Perumal\footnote{Indian Institute of Science Education and Research Thiruvananthapuram (IISER-TVM), Maruthamala P.O., Vithura, Kerala, India. PIN 695 551. \\ ORCID: 0000 0001 7951 8360\ \ email: \texttt{thirupathip23@iisertvm.ac.in} \\ The first named author thanks the support provided by NBHM through a fellowship with grant No. 0203/5(40)/2024-R\&D-II/11404}\ \ and\ \ Shrihari Sridharan\footnote{Indian Institute of Science Education and Research Thiruvananthapuram (IISER-TVM), Maruthamala P.O., Vithura, Kerala, India. PIN 695 551. \\ ORCID: 0000 0003 2434 4767\ \ email: \texttt{shrihari@iisertvm.ac.in} (Corresponding Author). \\ The second named author thanks the support provided by NBHM through a research grant No. 02011/35/2025/NBHM(R.P)/R\&D II/9832}}

\DeclareFontFamily{OT1}{pzc}{}
\DeclareFontShape{OT1}{pzc}{m}{it}%
              {<-> s * [0.900] pzcmi7t}{}
\DeclareMathAlphabet{\mathpzc}{OT1}{pzc}%
                                 {m}{it}

{\theorembodyfont{\slshape} \newtheorem{theorem}{Theorem}[section]}
{\theorembodyfont{\slshape} \newtheorem{definition}[theorem]{Definition}}
{\theorembodyfont{\slshape} \newtheorem{lemma}[theorem]{Lemma}}
{\theorembodyfont{\slshape} \newtheorem{proposition}[theorem]{Proposition}}
{\theorembodyfont{\slshape} }
{\theorembodyfont{\slshape} \newtheorem{corollary}[theorem]{Corollary}} 
{\theorembodyfont{\slshape} } 
{\theorembodyfont{\slshape} }
{\theorembodyfont{\slshape} }

\numberwithin{equation}{section}

\newenvironment{proof}{\paragraph{Proof:}}{\hfill$\bullet$}

\begin{document}

\maketitle

\begin{abstract} 
\noindent 
In this manuscript, we consider finitely many maps defined on a compact probability space; each of which preserves the probability measure. We consider two aspects of growth of dynamical systems in this scenario; the first setting where we let some infinite lettered word determine the path through which the orbit of points evolve, while the second setting where we consider all possible paths through which the orbit of a set may evolve. Our aim in this paper is to provide a full description of the celebrated multiplicative ergodic theorem, originally due to Oseledets, for such dynamical systems that is point-valued (as in the first case) and set-valued (as in the second case). 
\end{abstract}

\begin{tabular}{l l l} 
{\bf Keywords} & \hspace{+1cm} & Multiplicative ergodic theorem, \\ 
& & Random dynamics, \\ 
& & Simultaneous dynamics, \\ 
& & Lyapunov exponents. \\ 
& & \\
{\bf MSC Subject} & & 37H15, 37H12, 37C35, 37A05. \\ 
{\bf Classifications} & & \\ 
\end{tabular}

\bigskip 

\newpage 

\section{Introduction} 
\label{setting}

The multiplicative ergodic theorem, originally established by Oseledets in \cite{osel:1968}, is one of the cornerstone results in ergodic theory and dynamical systems. Building on an earlier work of Furstenberg and Kesten, as found in \cite{fust:1960}, concerning random products of matrices, the Oseledets' theorem provides a rigorous framework for describing the asymptotic behaviour of products of linear operators along trajectories of dynamical systems. In particular, it establishes the existence of Lyapunov exponents and invariant subspaces, which characterise the exponential growth rates, geometric structure of the underlying dynamics and thereby, gives us vital information concerning the stability of typical orbits. 

Since its introduction, the theorem has undergone several significant extensions and generalisations. Ruelle, in \cite{ruelle:1982} extended the Oseledets' theorem to infinite-dimensional Hilbert spaces, thereby broadening its applicability to a wide range of infinite-dimensional dynamical systems. A geometric interpretation of the theorem was later provided by Kaimanovich, in \cite{kaim:1987}. More recently, Bowen, Hayes and Lin, in \cite{bl:2021}, extended the multiplicative ergodic theorem to cocycles taking values in von Neumann algebras. Further developments and related results can be found in \cite{al:1998, sf:2016} and the references therein. 

In this paper, we study the multiplicative ergodic theorem in the setting of finitely many measure-preserving maps defined on a compact space. We consider two distinct situations. First, we fix an infinite word over a finite alphabet set and study the dynamics generated along the corresponding path. Second, we consider the collection of all possible paths, leading naturally to a set-valued dynamical system, as we shall explain soon. Following the classical approach of Oseledets, we establish analogous multiplicative ergodic results in both settings. The existence of Lyapunov exponents for these two cases has been established by the authors in \cite{ts:pp}, and this work further develops the corresponding ergodic framework and invariant structures. 

The paper is organised as follows: For the remainder of this section, we explain the setting of point-wise dynamics of points that we shall call random dynamical systems and the set-valued dynamics of sets that we shall call simultaneous dynamical systems that we are interested in, throughout this paper. In Section \ref{sec:prelim}, we state a few preliminary results in the field of random dynamical systems and simultaneous dynamical systems; especially focussing on the interesting ones that we state as Theorem \ref{thm:fkomega} and Theorem \ref{thm:fkomegafree}, where the authors obtain an expression for the Lyapunov exponents in both the settings. We state the main results of this paper in Section \ref{main}. As interested readers may find, Theorem \ref{thm:oselomega} and Theorem \ref{thm:oselnoomega} are nothing but the multiplicative ergodic theorem, stated for random and simultaneous dynamical systems respectively. Following up with a few simple lemmas in Section \ref{sec:afsl}, we embark on the proof of Theorem \ref{thm:oselomega} in Section \ref{sec:potomegaab}. The results in the next four sections, namely Section \ref{kingmananalogue}, Section \ref{markov}, Section \ref{invariantmsb} and Section \ref{action} come in handy to complete the proof of Theorem \ref{thm:oselomega} in Section \ref{partcomega}. In the next two sections, namely Section \ref{sec:potnoomegaab} and Section \ref{sec:potnoomegac}, we concern ourselves with the proof of the multiplicative ergodic theorem for the set-valued simultaneous dynamical system, the proof of each statement therein, running in similar spirit to their counterparts in the setting of random dynamical system. 

Consider finitely many maps $T_{1}, T_{2}, \cdots, T_{N}$, all of which act on $\left( X, \mathcal{B}_{X}, \mu \right)$, where $X$ is a compact space, $\mathcal{B}_{X}$ the $\sigma$-algebra of subsets of $X$ and $\mu$ a probability measure supported on $X$, that remains invariant under the action of $T_{i}$ for every $1 \le i \le N$. The dynamics that we investigate in this paper, grows as follows on $X$: Any point $x_{0} \in X$ at time $0$ has an equal probability of being at $x_{n} \in X$ at time $n \in \mathbb{Z}_{+}$, where 
\[ \displaystyle{x_{n} \in \bigg\{ \left( T_{\omega_{n}} \circ \cdots \circ T_{\omega_{1}} \right) (x_{0})\ :\ \omega_{j} \in \left\{ 1, 2, \cdots, N \right\} \bigg\}}. \] 
It is then a simple observation that we have a possibility of $N^{n}$ points, that one may consider as the $n$-th order forward image of $ x_{0}$, thus rendering a richer dynamical system than dealing with a single map on $X$. 

As one may observe, the choice of the combinatorial $n$-lettered word from the set of letters $\{ 1, 2, \cdots, N \}$ dictates the position of the evolution of the forward orbit of $x_{0}$. Hence, defining 
\[ \Sigma_{N}^{n}\ :=\ \left\{ \omega^{n} = \left( \omega^{n}_{1} \omega^{n}_{2} \cdots \omega^{n}_{n} \right)\ :\ \omega^{n}_{j} \in \left\{ 1, 2, \cdots, N \right\} \right\}, \] 
one may consider a specific $n$-lettered word $\omega^{n} \in \Sigma_{N}^{n}$ and study the evolution of the orbit of points in $X$ under the map $T_{\omega^{n}}$ given by $T_{\omega^{n}_{n}} \circ \cdots \circ T_{\omega^{n}_{1}}$ or consider the various possibilities of the point $x_{n} \in X$ pertaining to every $x_{0} \in X$ and $\omega^{n} \in \Sigma_{N}^{n}$. Finally, we let $n \to \infty$ to learn about the eventual evolution of the orbit of $x_{0}$ in $X$, and thus shall also deal with the space of infinitely long words that we denote by $\Sigma_{N}^{+}$ and define as 
\[ \Sigma_{N}^{+}\ :=\ \left\{ \omega = \left( \omega_{1} \omega_{2} \cdots \right)\ :\ \omega_{j} \in \left\{ 1, 2, \cdots, N \right\} \right\}. \] 

For any given $\omega \in \Sigma_{N}^{+}$, let $\pi_{j} \left( \omega \right) = \left( \omega_{1} \omega_{2} \cdots \omega_{j} \right) \in \Sigma_{N}^{j}$, whenever $0 < j < \infty$. One can analogously define $\pi_{j} \left( \omega^{n} \right)$ for any $\omega^{n} \in \Sigma_{N}^{n}$, whenever $0 < j \le n$. Even though $\pi_{0}$ does not make any sense when defined on the space $\Sigma_{N}^{+}$ and $\Sigma_{N}^{n}$, we shall understand $\pi_{0} \left( \cdot \right)$ to be the empty word, that has no letters. It is, of course easy to verify that given any $\omega \in \Sigma_{N}^{+}$, there exists a sequence of $n$-lettered words, $\left\{ \pi_{n} \left( \omega \right) \in \Sigma_{N}^{n} \right\}_{n\, \in\, \mathbb{Z}_{+} \cup \{0\}}$ that grows to become to $\omega$, as $n \to \infty$. Alternatively, considering $\omega^{n} = \pi_{n} \left( \omega \right) \in \Sigma_{N}^{n}$ and denoting the infinite concatenation of $\omega^{n}$ with itself by $\overline{\omega^{n}}$, one can observe that the sequence $\displaystyle{\left\{ \overline{\omega^{n}} \right\}_{n\, \in\, \mathbb{Z}_{+}}}$ converges to the given $\omega$ in $\Sigma_{N}^{+}$. The cylinder sets form the basis for a topology on $\Sigma_{N}^{+}$ and one can consider $\Sigma_{N}^{+}$ to be a measure space by defining the Bernoulli measure $\mathfrak{B}$ dependent on any given probability vector in $\mathbb{R}^{N}$. For more details on $\Sigma_{N}^{+}$, see \cite{kit:98, pw:2000, sb:2020}. 

Let $L : X \longrightarrow {\rm GL}_{d} \left( \mathbb{R} \right)$ be a measurable map. We consider the operator norm on ${\rm GL}_{d} \left( \mathbb{R} \right)$ given by 
\[ \| A \|_{{\rm op}}\ \ :=\ \ \sup_{v\, \in\, \mathbb{R}^{d}\; :\; \|v\|\, =\, 1} \|Av\|,\ \ \ \text{where}\ \ \| \cdot \|\ \text{denotes the Euclidean norm on}\ \mathbb{R}^{d}. \] 
By $\left( L(x) \right)^{\pm 1}$, we denote the matrix $L(x)$ or its inverse, as appropriate. We further assume that $\log^{+} \left\| \left( L(x) \right)^{\pm 1} \right\|_{{\rm op}}$ is an integrable function with respect to the measure $\mu$, where $\log^{+} \left\| A \right\|_{{\rm op}} = \max \left\{ 0, \log \left\| A \right\|_{{\rm op}} \right\}$. 

Define $S : \Sigma_{N}^{+} \times X \times \mathbb{R}^{d} \circlearrowright$ be a map given by 
\begin{equation} 
\label{actionofS} 
S \left( \omega,\, x,\, v \right)\ \ =\ \ \left( \sigma \omega,\, T_{\pi_{1} \left( \omega \right)} x,\, L(x) v \right), 
\end{equation} 
where $\sigma : \Sigma_{N}^{+} \circlearrowright$ is the shift map that shifts the terms in any infinite sequence by one place to the left, dropping the first letter. By virtue of this definition of $S$, for any $n \in \mathbb{Z}_{+}$, one can find the $n$-th iterate of $S$ as 
\begin{eqnarray*} 
S^{n} \left( \omega,\, x,\, v \right) & = & \left( \sigma^{n} \omega,\, T_{\pi_{n} \left( \omega \right)} x,\, \left( L \left( \left( T_{\pi_{n - 1} \left( \omega \right)} \right) x \right) \cdots L \left( \left( T_{\pi_{1} \left( \omega \right)} \right) x \right) L(x) \right) v \right) \\ 
& = & \left( \sigma^{n} \omega,\, T_{\pi_{n} \left( \omega \right)} x,\, \prod_{j\, =\, 0}^{n - 1} \left( L \circ T_{\pi_{j} \left( \omega \right)} (x) \right) v \right), 
\end{eqnarray*} 
where we use the notation $T_{\pi_{j} \left( \omega \right)} = T_{\omega_{j}} \circ \cdots \circ T_{\omega_{1}}$ for $1 \le j \le n$ and $T_{\pi_{0} \left( \omega \right)} (x) \equiv x$. As is customary, we consider 
\begin{equation} 
\label{eqn:s^0} 
S^{0} \left( \omega,\, x,\, v \right)\ \ \equiv\ \ \left( \omega,\, x,\, v \right)\ \ =\ \ \left( \sigma^{0} (\omega),\, T_{\pi_{0} \left( \omega \right)} (x),\, I_{d} (v) \right), 
\end{equation} 
where $I_{d}$ is the identity matrix in ${\rm GL}_{d} \left( \mathbb{R} \right)$. Further, since matrix multiplication is not commutative, we shall always multiply the matrices in the natural order arising from the dynamical system, meaning 
\[ \prod_{j\, =\, 0}^{n - 1} \left( L \circ T_{\pi_{j} \left( \omega \right)} (x) \right)\ \ =\ \ \left( L \circ T_{\pi_{n - 1} \left( \omega \right)} (x) \right) \cdots \left( L \circ T_{\pi_{1} \left( \omega \right)} (x) \right) \left( L \circ T_{\pi_{0} \left( \omega \right)} (x) \right). \] 

Our objective in this paper is to understand the dynamics that arises due to the map $S$ defined on $\Sigma_{N}^{+} \times X \times \mathbb{R}^{d}$, and use the same to investigate the set-valued dynamics due to $R$ that we shall now define on subsets of $X \times \mathbb{R}^{d}$. For any set $E \subseteq X$ and $F \subseteq \mathbb{R}^{d}$, we define 
\begin{equation} 
\label{actionofR} 
R \left( E,\, F \right)\ \ =\ \ \bigcup_{\omega_{1}\, =\, 1}^{N} \bigcup_{x\, \in\, E} \bigcup_{v\, \in\, F} \big\{ \left( T_{\omega_{1}} x,\, \left( L (x) \right) v \right) \big\}, 
\end{equation} 
so that the action of $R^{n}$ on the ordered pair $\left( E,\, F \right)$ can be recursively defined for any $n \in \mathbb{Z}_{+}$ and given by 
\[ R^{n} \left( E,\, F \right)\ \ =\ \ \bigcup_{\omega^{n}\, \in\, \Sigma_{N}^{n}} \bigcup_{x\, \in\, E} \bigcup_{v\, \in\, F} \left\{ \left( T_{\omega^{n}} x,\, \prod_{j\, =\, 0}^{n - 1} \left( L \circ T_{\pi_{j} \left( \omega^{n} \right)} (x) \right) v \right) \right\}. \]

\section{Preliminary results} 
\label{sec:prelim} 

The Birkhoff's ergodic theorem is a useful tool in dynamical systems, that relates the local time average of generic orbits to the global space average by employing a real-valued observable function defined on the domain of dynamics. The theorem states thus: 

\begin{theorem}\cite{pw:2000} 
\label{thm:bet} 
Consider the dynamical system governed by the map $T : X \circlearrowright$ that preserves the probability measure $\mu$ supported on a compact space $X$. For any real-valued observable $f \in \mathscr{L}^{1} (\mu)$, we have 
\[ \lim_{n\, \to\, \infty} \frac{1}{n} \sum_{j\, =\, 0}^{n - 1} \left( f \circ T^{j} x \right)\ \ =\ \ \widetilde{f} (x)\ \ \text{that satisfies}\ \ \int_{X} \widetilde{f} \mathrm{d}\mu\ =\ \int_{X} f \mathrm{d}\mu. \] 
\end{theorem}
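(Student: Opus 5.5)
The plan is the classical route: first the maximal ergodic inequality, then an oscillation argument showing that the upper and lower time averages agree almost everywhere, and finally a passage from bounded truncations to general $f \in \mathscr{L}^{1}(\mu)$ to get integrability and the identity of integrals. Write $S_{n} f = \sum_{j=0}^{n-1} f \circ T^{j}$ for the Birkhoff sums.

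\emph{Step 1 (maximal inequality).} For $f \in \mathscr{L}^{1}(\mu)$ set $F_{n} = \max_{0 \le k \le n} S_{k} f$, with $S_{0} f = 0$. Put $A = \{ \sup_{n} S_{n} f > 0 \}$; we aim to show $\int_{A} f \, \mathrm{d}\mu \ge 0$.
\begin{itemize}
\item On $\{F_{n} > 0\}$ one has $F_{n} \le f + F_{n} \circ T$, since $S_{k+1} f = f + S_{k} f \circ T \le f + F_{n} \circ T$ for $0 \le k < n$.
\item Integrating over $\{F_{n} > 0\}$ and using $T$-invariance of $\mu$ together with $F_{n} \circ T \ge 0$ gives
\[ \int_{\{F_{n} > 0\}} f \,\mathrm{d}\mu \ \ge\ \int_{X} F_{n} \,\mathrm{d}\mu - \int_{X} F_{n} \circ T \,\mathrm{d}\mu\ =\ 0. \]
\item Letting $n \to \infty$, the sets $\{F_{n} > 0\}$ increase to $A$, and dominated convergence yields $\int_{A} f \,\mathrm{d}\mu \ge 0$.
\end{itemize}
Applying this to $f - \beta$ restricted to a $T$-invariant set $E$ gives $\beta\, \mu(E \cap \{\sup_{n} \tfrac{1}{n} S_{n} f > \beta\}) \le \int_{E \cap \{\cdots\}} f \,\mathrm{d}\mu$.

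\emph{Step 2 (existence of the limit).} Let $\overline{f} = \limsup_{n} \tfrac{1}{n} S_{n} f$ and $\underline{f} = \liminf_{n} \tfrac{1}{n} S_{n} f$.
\begin{itemize}
\item Both are $T$-invariant, because $\tfrac{1}{n} S_{n} f \circ T = \tfrac{n+1}{n} \cdot \tfrac{1}{n+1} S_{n+1} f - \tfrac{1}{n} f$ and $f/n \to 0$ almost everywhere. The latter follows from Borel--Cantelli via invariance, since $\sum_{n} \mu(|f| > n\epsilon) < \infty$ for integrable $f$.
\item For rationals $\alpha < \beta$, the set $E_{\alpha,\beta} = \{\underline{f} < \alpha < \beta < \overline{f}\}$ is invariant.
\item Applying the Step 1 consequence on $E_{\alpha,\beta}$ to $f$ and to $-f$ gives $\beta\, \mu(E_{\alpha,\beta}) \le \int_{E_{\alpha,\beta}} f \,\mathrm{d}\mu \le \alpha\, \mu(E_{\alpha,\beta})$. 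Hence $\mu(E_{\alpha,\beta}) = 0$.
\end{itemize}
Taking the countable union over rationals shows that $\widetilde{f} = \lim_{n} \tfrac{1}{n} S_{n} f$ exists almost everywhere in $[-\infty, \infty]$.

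\emph{Step 3 (integrability and equality of integrals).} Fatou's lemma and invariance give $\int |\widetilde{f}| \,\mathrm{d}\mu \le \liminf_{n} \int \tfrac{1}{n} S_{n} |f| \,\mathrm{d}\mu = \int |f| \,\mathrm{d}\mu$. So $\widetilde{f}$ is finite almost everywhere and integrable.
\begin{itemize}
\item For bounded $g$, bounded convergence gives $\int \widetilde{g} \,\mathrm{d}\mu = \int g \,\mathrm{d}\mu$.
\item For general $f$, approximate by bounded $g$ with $\|f - g\|_{1} < \epsilon$. Apply the Fatou bound to $f - g$: $\|\widetilde{f} - \widetilde{g}\|_{1} \le \|f - g\|_{1} < \epsilon$.
\item Hence $\left| \int \widetilde{f} \,\mathrm{d}\mu - \int f \,\mathrm{d}\mu \right| < 2\epsilon$, and letting $\epsilon \to 0$ gives the identity.
\end{itemize}

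The main obstacle is the maximal inequality in Step 1, specifically the pointwise inequality $F_{n} \le f + F_{n} \circ T$ on $\{F_{n} > 0\}$. Everything afterwards is routine measure theory; compactness of $X$ plays no role.
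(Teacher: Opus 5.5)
The paper does not prove this statement. It is quoted as a preliminary from Walters' textbook, so there is no in-paper argument to measure yours against. Your proposal is a correct and complete proof along the standard lines: the maximal inequality via $F_{n} \le f + F_{n} \circ T$ on $\{F_{n} > 0\}$, vanishing of the oscillation sets $E_{\alpha,\beta}$, then Fatou. This is essentially the argument in the cited reference.

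The one real variation is in Step 3. The textbook obtains $\int \widetilde{f}\,\mathrm{d}\mu = \int f\,\mathrm{d}\mu$ by slicing $X$ into the invariant level sets $\{k/n \le \widetilde{f} < (k+1)/n\}$ and reapplying the maximal inequality on each slice. Your route instead uses bounded convergence for bounded $g$, then the contraction $\|\widetilde{f} - \widetilde{g}\|_{1} \le \|f - g\|_{1}$ from Fatou. It is equally valid and a little shorter. Note that it relies on $\widetilde{f - g} = \widetilde{f} - \widetilde{g}$ almost everywhere, which holds because all three limits exist and are finite almost everywhere.

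One small correction in Step 2: the appeal to Borel--Cantelli is misplaced. In your identity the error term is $f(x)/n$, not $f(T^{n}x)/n$. Since $f$ is real-valued, $f(x)/n \to 0$ at every point. Borel--Cantelli would only be needed for the alternative identity $\frac{1}{n}S_{n}f \circ T = \frac{1}{n}S_{n}f + \frac{1}{n}\bigl(f \circ T^{n} - f\bigr)$.

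A side benefit is that $\overline{f}$ and $\underline{f}$ are then invariant everywhere. So $T^{-1}E_{\alpha,\beta} = E_{\alpha,\beta}$ holds exactly, and $S_{n}\bigl((f - \beta)\mathbf{1}_{E}\bigr) = \mathbf{1}_{E}\,(S_{n}f - n\beta)$ holds pointwise, which is what your Step 1 consequence uses. Your remark that compactness of $X$ plays no role is also correct.
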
 

Our primary focus in this paper is to consider finitely maps $T_{i}$ acting on $X$, as has been explained in Section \ref{setting}. For such systems, we initially fix $\omega \in \Sigma_{N}^{+}$ and consider the growth of orbits focusing on the action that takes place in the space $X$ and $\mathbb{R}^{d}$ separately. The analogue of the Kingman ergodic theorem, as proved in \cite{ts:pp}, focuses on the space $X$. 

\begin{theorem}\cite{ts:pp} 
\label{thm:ketomega} 
Consider the maps $T_{i} : X \circlearrowright,\ 1 \le i \le N < \infty$ each of which preserves the probability measure $\mu$ supported on the compact space $X$. Let $f \in \mathscr{L}^{1} (\mu)$. Then, for any fixed $\omega \in \Sigma_{N}^{+}$, the sequence $\displaystyle{\left\{ \dfrac{1}{n} \sum\limits_{j\, =\, 0}^{n - 1} \left( f \circ T_{\pi_{j} \left( \omega \right)} x \right) \right\}_{n\, \in \mathbb{Z}_{+}}}$ converges for $\mu$-almost every $x \in X$ to some measurable function $f_{\omega} : X \longrightarrow [-\infty, \infty)$ that satisfies $f_{\omega} = f_{\omega} \circ T_{\pi_{n} \left( \omega \right)}$ for every $n \in \mathbb{Z}_{+}$ and for $\mu$-almost every $x \in X$. Moreover, 
\[ \int f_{\omega} \mathrm{d}\mu\ \ =\ \ \lim_{n\, \to\, \infty} \frac{1}{n} \int \sum\limits_{j\, =\, 0}^{n - 1} \left( f \circ T_{\pi_{j} \left( \omega \right)} \right) \mathrm{d}\mu\ \ =\ \ \inf_{n\, \ge\, 1} \frac{1}{n} \int \sum\limits_{j\, =\, 0}^{n - 1} \left( f \circ T_{\pi_{j} \left( \omega \right)} \right) \mathrm{d}\mu. \] 
\end{theorem}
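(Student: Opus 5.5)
The plan is to reduce everything to a non-autonomous Birkhoff sum and exploit the fact that every composition $T_{\pi_{j}(\omega)}$ preserves $\mu$. Fix $\omega \in \Sigma_{N}^{+}$ and write $a_{n}^{\omega}(x) := \sum_{j=0}^{n-1} f \circ T_{\pi_{j}(\omega)}(x)$.

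\textbf{Step 1: the integral identities.} I would settle these first, because they are elementary. Each $T_{\pi_{j}(\omega)}$ is a composition of $\mu$-preserving maps, so $\int f \circ T_{\pi_{j}(\omega)}\,\mathrm{d}\mu = \int f\,\mathrm{d}\mu$. Hence $\frac{1}{n}\int a_{n}^{\omega}\,\mathrm{d}\mu = \int f\,\mathrm{d}\mu$ for every $n$. The limit and the infimum in the statement therefore both equal $\int f\,\mathrm{d}\mu$. It then remains to show $\int f_{\omega}\,\mathrm{d}\mu = \int f\,\mathrm{d}\mu$ once $f_{\omega}$ has been constructed.

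\textbf{Step 2: the cocycle identity and the candidate limits.} I would record the additive identity
\[ a^{\omega}_{n+m}(x) \;=\; a^{\omega}_{n}(x) + a^{\sigma^{n}\omega}_{m}\left(T_{\pi_{n}(\omega)}x\right), \]
which plays the role of the subadditivity relation in Kingman's proof, here holding with equality. I would then set
\[ \overline{f}_{\omega} := \limsup_{n} \tfrac{1}{n} a^{\omega}_{n}, \qquad \underline{f}_{\omega} := \liminf_{n} \tfrac{1}{n} a^{\omega}_{n}. \]
Splitting off the first $k$ terms, whose contribution is $o(n)$ pointwise because $f$ is finite $\mu$-a.e., shows how $\overline{f}_{\omega}$ and $\underline{f}_{\omega}$ interact with $T_{\pi_{k}(\omega)}$. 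The target is the invariance $\overline{f}_{\omega} \circ T_{\pi_{k}(\omega)} = \overline{f}_{\omega}$ as worded in the statement, and likewise for $\underline{f}_{\omega}$.

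\textbf{Step 3: a maximal inequality along the word.} This is the key step. For bounded $f$, I would prove a maximal inequality
\[ \int_{\{\sup_{n} a^{\omega}_{n} > 0\}} f\,\mathrm{d}\mu \;\ge\; 0 \]
by the Garsia trick. Set $M_{n} = \max_{k\le n} a^{\omega}_{k}$ and compare $M_{n}$ with $f + M_{n}' \circ T_{\omega_{1}}$, where $M_{n}'$ is the corresponding maximal function for $\sigma\omega$. Integrating and using that $T_{\omega_{1}}$ preserves $\mu$ gives the estimate uniformly over the shifted words. Applying this to $f - \beta$ on the set $\{\underline{f}_{\omega} < \alpha < \beta < \overline{f}_{\omega}\}$ yields $\mu$-a.e. convergence, provided that set has the invariance from Step 2. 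I would then pass from bounded $f$ to $f \in \mathscr{L}^{1}(\mu)$ by truncation, using $\frac{1}{n}\int |a_{n}^{\omega}(f) - a_{n}^{\omega}(g)|\,\mathrm{d}\mu \le \|f-g\|_{1}$, again by measure preservation.

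\textbf{Step 4: the integral of the limit.} For bounded $f$, dominated convergence together with Step 1 gives $\int f_{\omega}\,\mathrm{d}\mu = \int f\,\mathrm{d}\mu$. For general $f$, Fatou's lemma and the truncation estimate give the same identity, with $f_{\omega}$ allowed to take values in $[-\infty,\infty)$.

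\textbf{Main obstacle.} The difficulty sits in Steps 2 and 3. The natural invariance produced by the cocycle identity is twisted: it relates $\overline{f}_{\omega}$ to $\overline{f}_{\sigma^{k}\omega} \circ T_{\pi_{k}(\omega)}$. The statement instead asserts untwisted invariance under $T_{\pi_{k}(\omega)}$ for a single fixed $\omega$. The maximal-inequality argument needs the set $\{\underline{f}_{\omega} < \alpha < \beta < \overline{f}_{\omega}\}$ to be invariant in exactly this sense. My first attempt would be to prove the maximal inequality simultaneously for the whole family $\{\sigma^{k}\omega\}_{k \ge 0}$ and show that the exceptional sets agree modulo $\mu$-null sets. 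If that fails, I would have to impose an extra hypothesis, such as commuting maps $T_{i}$ or a common ergodic decomposition, to close the gap.
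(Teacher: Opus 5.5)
The paper gives no proof of this statement (it is quoted from \cite{ts:pp}), so your proposal has to stand on its own. It has a genuine gap exactly where you suspect, in Steps 2 and 3.

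\textbf{Where the argument breaks.} Splitting off the first $k$ terms only gives the twisted relation $\overline{f}_{\omega} = \overline{f}_{\sigma^{k}\omega} \circ T_{\pi_{k}(\omega)}$. Likewise, the Garsia comparison $M^{\omega}_{n} \le f + (M^{\sigma\omega}_{n})^{+} \circ T_{\omega_{1}}$ only yields $\int_{\{M^{\omega}_{n} > 0\}} f \,\mathrm{d}\mu \ge \int (M^{\omega}_{n})^{+} \,\mathrm{d}\mu - \int (M^{\sigma\omega}_{n})^{+} \,\mathrm{d}\mu$. Once $\sigma\omega \neq \omega$, the right-hand side has no sign.

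\textbf{Why it cannot be patched.} For an arbitrary fixed $\omega$ the statement is false. Take $X = \mathbb{R}/\mathbb{Z}$ with Lebesgue measure, $T_{1} = \mathrm{id}$, $T_{2}x = x + \alpha$ with $\alpha$ irrational, and $f(x) = \cos(2\pi x)$.
\begin{enumerate}
\item For $\omega = 2\,1\,1\,1\cdots$ the averages converge to $f_{\omega}(x) = f(x + \alpha)$. This limit is not invariant under $T_{\pi_{1}(\omega)} = T_{2}$.
\item For $\omega = 1^{L_{0}}\,2\,1^{L_{1}}\,2\,1^{L_{2}}\cdots$ with $L_{m}/(L_{0} + \cdots + L_{m-1} + m) \to \infty$, the average at the end of the block $1^{L_{m}}$ is $f(x + m\alpha) + o(1)$. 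So the sequence diverges for every $x$.
\end{enumerate}
These two maps commute, so your first fallback does not rescue the statement. Neither does a common ergodic decomposition. Take $T_{1}x = x - \alpha$ and $T_{2}x = x + \alpha$, both ergodic, and blocks $(1\,2)^{L_{m}}$ separated by single $1$'s, with the same growth condition. The averages then track $\cos(\pi\alpha)\cos\bigl(2\pi(x - (m + \tfrac{1}{2})\alpha)\bigr)$, which again diverges. Your Step 1 is correct (all three quantities equal $\int f\,\mathrm{d}\mu$), but it only settles the trivial part.

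\textbf{What is true, and the missing idea.} The correct statement is for $\mathfrak{B}$-almost every $\omega$; the paper itself argues this way in the proof of Corollary \ref{cor:nustar}. The missing idea is to work on the skew product rather than along a single word.
\begin{enumerate}
\item The map $F(\omega, x) = (\sigma\omega, T_{\omega_{1}}x)$ preserves $\mathfrak{B} \times \mu$, because $\mathfrak{B}$ is a product measure and each $T_{i}$ preserves $\mu$.
\item Birkhoff's theorem for $F$ together with Fubini gives $\mu$-a.e.\ convergence of your averages to $\widetilde{f}(\omega, \cdot)$ for $\mathfrak{B}$-a.e.\ $\omega$. At this stage you only have the twisted invariance $\widetilde{f} = \widetilde{f} \circ F$.
\item The untwisted invariance comes from Kakutani's observation that an $F$-invariant $h \in \mathscr{L}^{2}(\mathfrak{B} \times \mu)$ does not depend on $\omega$. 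Set $g(y) = \int h(\eta, y)\,\mathrm{d}\mathfrak{B}(\eta)$. Since $\sigma^{n}\omega$ is independent of $(\omega_{1}, \ldots, \omega_{n})$, the conditional expectation of $h = h \circ F^{n}$ given $(\omega_{1}, \ldots, \omega_{n}, x)$ is $g(T_{\pi_{n}(\omega)}x)$. Its $\mathscr{L}^{2}$-norm is $\|g\|_{2}$ for every $n$. Martingale convergence then gives $\|h\|_{2} = \|g\|_{2}$. Since $g$ is the conditional expectation of $h$ given $x$ alone, this forces $h = g$.
\item Apply this to bounded truncations of $\widetilde{f}$. It shows $f_{\omega} = g$ is independent of $\omega$ and is $T_{i}$-invariant for every letter of positive weight. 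Hence $f_{\omega}$ is $T_{\pi_{n}(\omega)}$-invariant for all $n$ and $\mathfrak{B}$-a.e.\ $\omega$, since such $\omega$ use only letters of positive weight.
\item Finally, $\int f_{\omega}\,\mathrm{d}\mu = \int\int \widetilde{f}\,\mathrm{d}\mu\,\mathrm{d}\mathfrak{B} = \int f\,\mathrm{d}\mu$, and your Step 1 supplies the remaining identities.
\end{enumerate}
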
 

For the same setting as in Theorem \ref{thm:ketomega}, we now look at the effect of the dynamics on $\mathbb{R}^{d}$, as stated in the following theorem that is analogous to the Furstenberg-Kesten theorem. 

\begin{theorem}\cite{ts:pp} 
\label{thm:fkomega} 
Consider the maps $T_{i} : X \circlearrowright,\ 1 \le i \le N < \infty$ each of which preserves the probability measure $\mu$ supported on the compact space $X$. Suppose $L : X \longrightarrow {\rm GL}_{d} \left( \mathbb{R} \right)$ is a measurable map such that $\log^{+} \left\| L(x)^{\pm 1} \right\|_{{\rm op}} \in \mathscr{L}^{1} (\mu)$. Then, for any fixed $\omega \in \Sigma_{N}^{+}$, the extremal Lyapunov exponents given by 
\begin{equation} 
\label{lambdaplmiomega} 
\lambda_{+}^{\omega} (x)\ =\ \lim_{n\, \to\, \infty} \frac{1}{n} \log \left\| \prod_{j\, =\, 0}^{n - 1} \left( L \left( T_{\pi_{j} \left( \omega \right)} x \right) \right) \right\|_{{\rm op}}\ \ \text{and}\ \ \lambda_{-}^{\omega} (x)\ =\ \lim_{n\, \to\, \infty} \frac{1}{n} \log \left\| \prod_{j\, =\, 0}^{n - 1} \left( L \left( T_{\pi_{j} \left( \omega \right)} x \right) \right)^{-1} \right\|_{{\rm op}}^{-1}, 
\end{equation} 
exist for $\mu$-almost every $x \in X$. The functions $\lambda_{\pm}^{\omega}$ satisfy $\lambda_{\pm}^{\omega} = \lambda_{\pm}^{\omega} \circ T_{\pi_{n} \left( \omega \right)}$ for every $n \in \mathbb{Z}_{+}$ and for $\mu$-almost every $x \in X$. Further, 
\begin{eqnarray*} 
\int \lambda_{+}^{\omega} \mathrm{d}\mu & = & \lim_{n\, \to\, \infty} \frac{1}{n} \int \log \left\| \prod_{j\, =\, 0}^{n - 1} \left( L \circ T_{\pi_{j} \left( \omega \right)} \right) \right\|_{{\rm op}} \mathrm{d}\mu\ \ \text{and} \\ 
\int \lambda_{-}^{\omega} \mathrm{d}\mu & = & \lim_{n\, \to\, \infty} \frac{1}{n} \int \log \left\| \prod_{j\, =\, 0}^{n - 1} \left( L \circ T_{\pi_{j} \left( \omega \right)} \right)^{-1} \right\|_{{\rm op}}^{-1} \mathrm{d}\mu. 
\end{eqnarray*} 
\end{theorem}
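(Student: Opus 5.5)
The plan is to derive Theorem \ref{thm:fkomega} from a subadditive statement along the fixed path $\omega$, in the spirit of Theorem \ref{thm:ketomega}. Fix $\omega \in \Sigma_{N}^{+}$ and put
\[ A_{n}(x) := \prod_{j\, =\, 0}^{n - 1} L \left( T_{\pi_{j} \left( \omega \right)} x \right), \qquad \phi_{n}(x) := \log \left\| A_{n}(x) \right\|_{{\rm op}}. \]
The cocycle identity $A_{n + m}(x) = A_{m}^{\sigma^{n} \omega} \left( T_{\pi_{n} \left( \omega \right)} x \right) A_{n}(x)$ holds, where the superscript indicates that the shifted word $\sigma^{n} \omega$ is used. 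Together with submultiplicativity of the operator norm, this gives
\[ \phi_{n + m}^{\omega}(x) \le \phi_{n}^{\omega}(x) + \phi_{m}^{\sigma^{n} \omega} \left( T_{\pi_{n} \left( \omega \right)} x \right). \]
This is a subadditive cocycle over the skew product $(\omega, x) \mapsto \left( \sigma \omega, T_{\pi_{1} \left( \omega \right)} x \right)$, but with $\omega$ frozen. Integrability comes from the hypothesis: $\phi_{1}^{+} \le \log^{+} \| L \|_{{\rm op}} \in \mathscr{L}^{1}(\mu)$, and every $T_{i}$ preserves $\mu$, so $\int \left( \phi_{1}^{\sigma^{j} \omega} \circ T_{\pi_{j} \left( \omega \right)} \right)^{+} \mathrm{d}\mu = \int \log^{+} \| L \|_{{\rm op}} \mathrm{d}\mu$ uniformly in $j$.

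The core step is a Kingman-type theorem along $\omega$, i.e.\ the subadditive version of Theorem \ref{thm:ketomega}. The proof in \cite{ts:pp} of that theorem is stated for Birkhoff sums, so I would redo the standard Kingman argument, for instance the Avila--Bochi or Steele proof, in this non-autonomous setting. First, the integrals $a_{n} := \int \phi_{n}^{\omega} \mathrm{d}\mu$ satisfy $a_{n + m} \le a_{n} + \int \phi_{m}^{\sigma^{n} \omega} \mathrm{d}\mu$ by invariance of $\mu$ under $T_{\pi_{n} \left( \omega \right)}$. Second, set $\overline{\lambda} = \limsup \frac{1}{n} \phi_{n}$ and $\underline{\lambda} = \liminf \frac{1}{n} \phi_{n}$. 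Subadditivity with $n = 1$ gives $\overline{\lambda}^{\omega}(x) \le \overline{\lambda}^{\sigma \omega} \left( T_{\omega_{1}} x \right)$, and similarly for $\underline{\lambda}$. Since both sides have equal $\mu$-integral (upper-truncated), this yields the invariance $\lambda^{\omega} = \lambda^{\sigma^{n} \omega} \circ T_{\pi_{n} \left( \omega \right)}$ a.e. Third, the almost sure convergence $\underline{\lambda} = \overline{\lambda}$ follows from a stopping-time covering argument, with the integral identity $\int \lambda_{+}^{\omega} \mathrm{d}\mu = \lim \frac{1}{n} a_{n}$ as a byproduct.

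Once $\lambda_{+}^{\omega}$ is established, $\lambda_{-}^{\omega}$ follows by applying the same result to $\psi_{n}(x) = \log \left\| A_{n}(x)^{-1} \right\|_{{\rm op}}$, which is also subadditive since $A_{n + m}^{-1} = A_{n}^{-1} \left( A_{m}^{\sigma^{n} \omega} \circ T_{\pi_{n} \left( \omega \right)} \right)^{-1}$, and is integrable via $\log^{+} \| L^{-1} \|_{{\rm op}}$. Then $\lambda_{-}^{\omega} = -\lim \frac{1}{n} \psi_{n}$. The bound $\| A \|_{{\rm op}} \ge \| A^{-1} \|_{{\rm op}}^{-1}$ together with $\log \| A_{n} \|_{{\rm op}} \ge -\sum \log^{+} \| L^{-1} \|_{{\rm op}} \circ T_{\pi_{j} \left( \omega \right)}$ shows that both limits are finite a.e., not merely in $[-\infty, \infty)$, by Theorem \ref{thm:ketomega} applied to $\log^{+} \| L^{-1} \|_{{\rm op}}$.

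The main obstacle is the statement as written: invariance $\lambda_{\pm}^{\omega} = \lambda_{\pm}^{\omega} \circ T_{\pi_{n} \left( \omega \right)}$ with the same $\omega$ on both sides. Honestly, subadditivity only relates $\omega$ to $\sigma^{n} \omega$. The identity holds directly for the tail-type limit $\limsup_{m} \frac{1}{m} \phi_{m}^{\sigma^{n} \omega} \left( T_{\pi_{n} \left( \omega \right)} x \right)$. To reconcile the two, I would use the convention of Theorem \ref{thm:ketomega}, where the invariance is asserted in the same form, and read $\lambda^{\omega} \circ T_{\pi_{n} \left( \omega \right)}$ as evaluation along the continued path. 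This reduces invariance to $\lambda^{\omega}(x) = \lim_{m} \frac{1}{m} \phi_{m}^{\sigma^{n} \omega} \left( T_{\pi_{n} \left( \omega \right)} x \right)$. That limit follows from the cocycle identity, since $\frac{1}{m} \left| \phi_{n + m}^{\omega} - \phi_{m}^{\sigma^{n} \omega} \circ T_{\pi_{n} \left( \omega \right)} \right| \le \frac{1}{m} \left( \| \log A_{n} \| \text{-terms} \right) \to 0$ for fixed $n$. This bound is valid because $A_{n}(x)$ is a fixed invertible matrix.
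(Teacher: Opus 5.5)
The paper gives no proof of this statement; it is imported from the authors' earlier work. So your plan has to stand on its own, and it has a gap that cannot be closed. The step you defer to a stopping-time covering argument, almost sure convergence of $\frac{1}{n}\phi_n^{\omega}$ for a \emph{frozen} word $\omega$, is false for general $\omega$.

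Take $X=\{a,b\}$ with the uniform measure, $N=2$, $T_1=\mathrm{id}$, $T_2$ the swap, $d=1$, $L(a)=e$ and $L(b)=1$. Then $\frac{1}{n}\log|A_n(x)|$ is the proportion of indices $0\le j<n$ with $T_{\pi_j(\omega)}x=a$. Choose $\omega=1^{\ell_1}\,2\,1^{\ell_2}\,2\,1^{\ell_3}\cdots$ with $\ell_m\ge m(\ell_1+\cdots+\ell_{m-1}+m)$. This proportion is then at least $1-\frac{1}{m}$ at the end of every other block and at most $\frac{1}{m}$ at the end of the remaining ones, at both points of $X$. So $\lambda_{+}^{\omega}$ exists nowhere. (A non-atomic version: replace the swap by $x\mapsto 2x$ on $\mathbb{R}/\mathbb{Z}$ with Lebesgue measure and take $L(x)=e^{\cos 2\pi x}$. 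The same word gives $\frac{1}{n}\log|A_n(x)|=\cos(2\pi\, 2^{m-1}x)+O(\frac{1}{m})$ at the end of the $m$-th block, which oscillates for almost every $x$.)

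The same example refutes Theorem~\ref{thm:ketomega} for an arbitrary fixed word, so your finiteness argument, which leans on it, inherits the defect. Your unease about the literal invariance is also justified. For $\omega=2\,1\,1\,1\cdots$ the limits do exist, but $\lambda_{+}^{\omega}(x)=\log L(T_2x)$ whereas $\lambda_{+}^{\omega}(T_{\pi_1(\omega)}x)=\log L(x)$.

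The structural reason is that Kingman's covering step is a frequency estimate. The orbit must enter the set where the stopping time exceeds $M$ only with small density, and that is supplied by Birkhoff's theorem for one measure-preserving map. With $\omega$ frozen, the relevant orbit is that of $(\omega,x)$ under $\mathcal{T}(\omega,x)=(\sigma\omega,T_{\omega_1}x)$, started on the single slice $\{\omega\}\times X$. That slice is $(\mathfrak{B}\times\mu)$-null whenever $\mathfrak{B}$ is non-atomic, so no ergodic theorem controls it. The relation $\overline{\lambda}^{\omega}=\overline{\lambda}^{\sigma\omega}\circ T_{\omega_1}$ from your second step cannot substitute for this; it holds pointwise anyway, since $L(x)$ is invertible. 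Nor is $a_{n+m}\le a_n+\int\phi_m^{\sigma^n\omega}\,\mathrm{d}\mu$ a subadditive sequence.

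The repair is to unfreeze $\omega$. Since $\mathfrak{B}$ is Bernoulli and each $T_i$ preserves $\mu$, $\mathcal{T}$ preserves $\mathfrak{B}\times\mu$. Setting $\phi_n(\omega,x):=\phi_n^{\omega}(x)$ gives $\phi_{n+m}\le\phi_n+\phi_m\circ\mathcal{T}^{n}$, which is exactly your cocycle inequality. The classical Kingman theorem for $\mathcal{T}$ then applies verbatim, and:
\begin{itemize}
\item Fubini gives convergence for $\mu$-a.e.\ $x$ for $\mathfrak{B}$-a.e.\ $\omega$, not for every $\omega$;
\item the invariance comes out in the form you isolated, $\lambda(\omega,x)=\lambda(\sigma^{n}\omega,T_{\pi_n(\omega)}x)$;
\item your inverse-cocycle argument handles $\lambda_{-}$.
\end{itemize}
This is the version the later sections effectively use (compare Theorem~\ref{thm:kingmanomega}). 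The integral formula for a fixed typical $\omega$ needs one extra uniform-integrability step. For instance, dominate $|\frac{1}{n}\phi_n^{\omega}|$ by $\frac{1}{n}\sum_{j<n}g\circ T_{\pi_j(\omega)}$ with $g=\log^{+}\|L\|_{\mathrm{op}}+\log^{+}\|L^{-1}\|_{\mathrm{op}}$. These averages converge in $L^{1}(\mu)$ for $\mathfrak{B}$-a.e.\ $\omega$, by Birkhoff's theorem for $\mathcal{T}$, Fatou's lemma and Scheff\'e's lemma.
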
 

The authors in \cite{aswin:2024}, studied the case where all maps $T_{i};\ 1 \le i \le N < \infty$ act on the unit interval $[0, 1)$ simultaneously, and obtained the analogue of Birkhoff's ergodic theorem as thus: 

\begin{theorem}\cite{aswin:2024} 
\label{thm:betm} 
Let $T_{i} : [0, 1) \circlearrowright,\ 1 \le i \le N < \infty$ be a finite collection of interval maps given by $T_{i} (x) = (i + 1) x (\mod 1)$. Then, for any real-valued function $f \in \mathscr{L}^{1} (\ell)$, we have 
\[ \lim_{n\, \to\, \infty} \frac{1}{n} \frac{1}{N^{n}} \sum_{\omega^{n}\, \in\, \Sigma_{N}^{n}}\; \sum_{j\, =\, 0}^{n - 1} \left( f \circ T_{\pi_{j} \left( \omega^{n} \right)} x \right)\ \ =\ \ \int_{[0, 1)} f \mathrm{d}\ell, \] 
for $\ell$-almost every $x \in [0, 1)$, where $\ell$ denotes the Lebesgue measure. 
\end{theorem}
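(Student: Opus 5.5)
The plan is to turn the average over all words into an ergodic average for a single Markov operator, and then apply an operator ergodic theorem.

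\textbf{Reformulation.} Define the averaging operator
\[ P g\ :=\ \frac{1}{N} \sum_{i\, =\, 1}^{N} g \circ T_{i}. \]
For $j \le n$, each $j$-lettered prefix $\pi_{j} ( \omega^{n} )$ occurs exactly $N^{n - j}$ times in $\Sigma_{N}^{n}$. Exchanging the two sums therefore gives
\[ \frac{1}{N^{n}} \sum_{\omega^{n}\, \in\, \Sigma_{N}^{n}} \sum_{j\, =\, 0}^{n - 1} f \circ T_{\pi_{j} ( \omega^{n} )}\ =\ \sum_{j\, =\, 0}^{n - 1} \frac{1}{N^{j}} \sum_{\omega^{j}\, \in\, \Sigma_{N}^{j}} f \circ T_{\omega^{j}}\ =\ \sum_{j\, =\, 0}^{n - 1} P^{j} f. \]
The last equality uses $P^{j} f = N^{-j} \sum_{\omega^{j}} f \circ T_{\omega^{j}}$, which follows by induction on $j$. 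So the claim is exactly that the Cesàro averages $\frac{1}{n} \sum_{j < n} P^{j} f$ converge $\ell$-a.e. to $\int f \, \mathrm{d}\ell$.

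\textbf{Convergence.} Each $T_{i}(x) = (i+1)x \pmod 1$ preserves $\ell$. Hence $P$ is a positive operator with $P1 = 1$ and $\int Pg \, \mathrm{d}\ell = \int g \, \mathrm{d}\ell$, so it is a contraction on $\mathscr{L}^{1}(\ell)$ and on $\mathscr{L}^{\infty}(\ell)$. The Dunford--Schwartz (Hopf) pointwise ergodic theorem then gives, for every $f \in \mathscr{L}^{1}(\ell)$, an a.e. and $\mathscr{L}^{1}$ limit $f^{*}$ of the Cesàro averages. This limit satisfies $P f^{*} = f^{*}$ and $\int f^{*} \, \mathrm{d}\ell = \int f \, \mathrm{d}\ell$. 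It remains to show that $f^{*}$ is constant; the constant is then forced to be $\int f \, \mathrm{d}\ell$.

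\textbf{Identifying the limit (main step).} I would first treat bounded $f$. Then $f^{*}$ is bounded, and in particular lies in $\mathscr{L}^{2}(\ell)$. By the triangle inequality,
\[ \| P f^{*} \|_{2}\ \le\ \frac{1}{N} \sum_{i} \| f^{*} \circ T_{i} \|_{2}\ =\ \| f^{*} \|_{2}. \]
Since $P f^{*} = f^{*}$, equality holds throughout. By strict convexity of the Hilbert norm, equality forces all the $f^{*} \circ T_{i}$ to coincide a.e., and hence each equals their average $P f^{*} = f^{*}$. In particular $f^{*} \circ T_{1} = f^{*}$, where $T_{1}$ is the doubling map, which is ergodic for $\ell$. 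So $f^{*}$ is a.e. constant.

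For general $f \in \mathscr{L}^{1}(\ell)$, I would approximate by bounded $g$ with $\| f - g \|_{1} < \varepsilon$. Write $A_{n} = \frac{1}{n} \sum_{j < n} P^{j}$. The maximal inequality accompanying the Dunford--Schwartz theorem, $\ell\{ \sup_{n} |A_{n} h| > t \} \le \| h \|_{1} / t$, controls $A_{n}(f - g)$ uniformly in $n$. Together with the bounded case this gives
\[ \limsup_{n} \left| A_{n} f - \int f \, \mathrm{d}\ell \right|\ \le\ \sup_{n} |A_{n}(f - g)| + |\!\int (f - g) \, \mathrm{d}\ell|, \]
which is small outside a set of small measure. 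Letting $\varepsilon \to 0$ finishes the proof.

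The step I expect to need the most care is the identification of $P$-invariant functions with constants. The equality-case argument in $\mathscr{L}^{2}$, plus ergodicity of a single $T_{i}$, is what I would try first. If the maximal-inequality bookkeeping proves awkward, I would instead cite the Dunford--Schwartz conclusion that the limit is the projection onto $\ker(I - P)$ and use that this kernel consists of constants.
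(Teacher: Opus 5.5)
The paper gives no proof of this statement; it is imported from \cite{aswin:2024}. The closest route inside the paper would be the simultaneous Kingman theorem, Theorem \ref{kingmanomegafreep}, applied to the additive sequence. That theorem yields an a.e.\ limit $F$ with $F = F \circ T_{i}$ for every $i$ and $\int F \, \mathrm{d}\ell = \int f \, \mathrm{d}\ell$. Ergodicity of the doubling map $T_{1}$ then forces $F \equiv \int f \, \mathrm{d}\ell$.

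Your proof is correct and gets there by a different route. The identity $\frac{1}{N^{n}} \sum_{\omega^{n}} \sum_{j < n} f \circ T_{\pi_{j} ( \omega^{n} )} = \sum_{j < n} P^{j} f$ is exact. It reduces everything to Ces\`aro averages of a single doubly stochastic Markov operator, so the limit exists by the classical Dunford--Schwartz theorem rather than a new Kingman-type result. Your strict-convexity step is precisely what upgrades $P f^{*} = f^{*}$ to the joint invariance $f^{*} \circ T_{i} = f^{*}$ that Theorem \ref{kingmanomegafreep} asserts directly. One can also see this from $\frac{1}{N} \sum_{i} \| f^{*} \circ T_{i} - f^{*} \|_{2}^{2} = 2 \| f^{*} \|_{2}^{2} - 2 \langle P f^{*}, f^{*} \rangle = 0$.

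What your route buys is transparency and generality. It shows the only inputs are that each $T_{i}$ preserves $\ell$ and that no nontrivial set is invariant under all $T_{i}$ simultaneously; here a single ergodic $T_{i}$ guarantees the latter. In general the limit is the conditional expectation onto that jointly invariant $\sigma$-algebra, which is exactly the generalisation the paper alludes to right after the statement. What it does not give by itself is the subadditive version the paper later needs for Lyapunov exponents, since $P$ acts linearly and $\log \| L_{\omega^{n}}^{n} (x) \|_{{\rm op}}$ is not additive.

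For the $\mathscr{L}^{1}$ extension, your maximal inequality does hold with constant $1$ here, because $P$ is positive with $P 1 = 1$. It is quicker, though, to note that $f \mapsto f^{*}$ is an $\mathscr{L}^{1}$-contraction, by the $\mathscr{L}^{1}$ convergence of the averages. You can then pass to the limit from bounded approximants.
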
 

Even though the results in \cite{aswin:2024} have been proved for simultaneous action of interval maps, one can generalise the same to simultaneous dynamics as we consider in this paper. Moreover, the authors have proved the Kingman ergodic theorem, in such generalised setting, in \cite{ts:pp}, that we now state.

\begin{theorem}\cite{ts:pp} 
\label{kingmanomegafreep} 
Consider the maps $T_{i} : X \longrightarrow X,\ 1 \le i \le N < \infty$ each of which preserves the probability measure $\mu$ supported on the compact space $X$. Let $f \in \mathscr{L}^{1} (\mu)$. Then, the sequence $\displaystyle{\left\{ \dfrac{1}{n} \frac{1}{N^{n}} \sum_{\omega^{n}\, \in\, \Sigma_{N}^{n}}\; \sum_{j\, =\, 0}^{n - 1} \left( f \circ T_{\pi_{j} \left( \omega^{n} \right)} x \right) \right\}_{n\, \in \mathbb{Z}_{+}}}$ converges for $\mu$-almost every $x \in X$ to some measurable function $F : X \longrightarrow [-\infty, \infty)$ that satisfies $F = F \circ T_{i}$ for every $i \in \left\{ 1, 2, \cdots, N \right\}$ and for $\mu$-almost every $x \in X$. Moreover, 
\[ \int F \mathrm{d}\mu\ =\ \lim_{n\, \to\, \infty} \frac{1}{n} \frac{1}{N^{n}} \sum_{\omega^{n}\, \in\, \Sigma_{N}^{n}} \int \sum_{j\, =\, 0}^{n - 1} \left( f \circ T_{\pi_{j} \left( \omega^{n} \right)} \right) \mathrm{d}\mu\ =\ \inf_{n\, \ge\, 1} \frac{1}{n} \frac{1}{N^{n}} \sum_{\omega^{n}\, \in\, \Sigma_{N}^{n}} \int \sum_{j\, =\, 0}^{n - 1} \left( f \circ T_{\pi_{j} \left( \omega^{n} \right)} \right) \mathrm{d}\mu. \]
\end{theorem}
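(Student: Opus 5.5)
The plan is to reduce the statement to an ergodic theorem for a single averaging operator. Define on $\mathscr{L}^{1}(\mu)$ the operator
\[ P g\ :=\ \frac{1}{N} \sum_{i\, =\, 1}^{N} g \circ T_{i}. \]
The term $f \circ T_{\pi_{j}(\omega^{n})}$ depends only on the first $j$ letters of $\omega^{n}$. Summing over the remaining $n-j$ letters therefore only contributes a factor $N^{n-j}$, and an induction on $j$ gives
\[ \frac{1}{N^{n}} \sum_{\omega^{n}\, \in\, \Sigma_{N}^{n}} f \circ T_{\pi_{j}(\omega^{n})}\ =\ P^{j} f. \]
So the sequence in the statement is exactly the Cesàro average $\frac{1}{n}\sum_{j=0}^{n-1} P^{j} f(x)$.

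Next we verify the hypotheses of an operator ergodic theorem. $P$ is positive and $P\mathbf{1} = \mathbf{1}$. Since each $T_{i}$ preserves $\mu$, we have $\|g \circ T_{i}\|_{1} = \|g\|_{1}$, and hence $\|Pg\|_{1} \le \|g\|_{1}$. Also $\|Pg\|_{\infty} \le \|g\|_{\infty}$. Thus $P$ is a Dunford--Schwartz operator. The Dunford--Schwartz pointwise ergodic theorem (equivalently Hopf, or Chacon--Ornstein with $P\mathbf{1}=\mathbf{1}$) then gives a function $F$ with $\frac{1}{n}\sum_{j<n} P^{j} f \to F$ $\mu$-a.e. and in $\mathscr{L}^{1}$. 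In particular $F$ is finite a.e., so it takes values in $[-\infty,\infty)$ trivially, and $PF = F$.

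I expect the a.e. convergence to be the main obstacle, and I would not try to avoid Dunford--Schwartz. The obvious alternative is Birkhoff's theorem (Theorem \ref{thm:bet}) for the skew product $(\omega,x) \mapsto (\sigma\omega, T_{\omega_{1}}x)$ on $\Sigma_{N}^{+} \times X$ with measure $\mathfrak{B} \times \mu$, where $\mathfrak{B}$ is the uniform Bernoulli measure. By Fubini this yields convergence for a.e. $(\omega,x)$. However, integrating out $\omega$ does not commute with pointwise limits without extra domination, so this route does not directly give what we need.

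Invariance of $F$ under each $T_{i}$ comes from $PF = F$. First truncate: set $F_{c} = \max(\min(F,c),-c)$. Positivity and $P\mathbf{1}=\mathbf{1}$ give $PF_{c} \le F_{c}$ on the set where $F$ is truncated from above, with the symmetric inequality from below. Since $\int Pg\,\mathrm{d}\mu = \int g\,\mathrm{d}\mu$, these inequalities are forced to be equalities, so $PF_{c} = F_{c}$ a.e. Now $F_{c} \in \mathscr{L}^{2}(\mu)$ and $\|F_{c}\circ T_{i}\|_{2} = \|F_{c}\|_{2}$ for every $i$. Thus $F_{c}$ is an average of $N$ vectors of the same norm as itself, and strict convexity of $\mathscr{L}^{2}$ forces $F_{c}\circ T_{i} = F_{c}$ for each $i$. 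Letting $c\to\infty$ gives $F = F\circ T_{i}$ a.e.

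Finally, the integral identities follow from $\int P^{j} f\,\mathrm{d}\mu = \int f\,\mathrm{d}\mu$. Each expression $\frac{1}{n}\frac{1}{N^{n}}\sum_{\omega^{n}}\int \sum_{j<n} f\circ T_{\pi_{j}(\omega^{n})}\,\mathrm{d}\mu$ equals $\int f\,\mathrm{d}\mu$ for every $n$. So the limit and the infimum both equal $\int f\,\mathrm{d}\mu$, and by the $\mathscr{L}^{1}$ convergence this is $\int F\,\mathrm{d}\mu$.
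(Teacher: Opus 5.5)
Your reduction is correct. The identity $\frac{1}{N^{n}}\sum_{\omega^{n}\in\Sigma_{N}^{n}} f\circ T_{\pi_{j}(\omega^{n})}=P^{j}f$ holds, and $P$ is a positive $\mathscr{L}^{1}$--$\mathscr{L}^{\infty}$ contraction. The Dunford--Schwartz theorem, together with density of $\mathscr{L}^{\infty}$ in $\mathscr{L}^{1}$ on a probability space, gives a.e.\ and $\mathscr{L}^{1}$ convergence, and hence $PF=F$.

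The step that does not work as written is the truncation. You get $PF_{c}\le F_{c}$ on $\{F\ge c\}$ and $PF_{c}\ge F_{c}$ on $\{F\le -c\}$, but nothing on $\{|F|<c\}$. Inequalities of opposite signs on disjoint sets cannot be forced into equalities by $\int PF_{c}\,\mathrm{d}\mu=\int F_{c}\,\mathrm{d}\mu$. In fact this reasoning never uses $PF=F$. Applied to an arbitrary bounded $g$ with $c>\|g\|_{\infty}$, both sets are null and $g_{c}=g$, so it would `prove' $Pg=g$, which is false.

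The missing input is $PF=F$, used globally. Since $\min\{F,c\}\le F$ and $\min\{F,c\}\le c$, positivity gives $P\min\{F,c\}\le PF=F$ and $P\min\{F,c\}\le c$. Hence $P\min\{F,c\}\le\min\{F,c\}$ everywhere, and now equality of integrals does force $P\min\{F,c\}=\min\{F,c\}$ a.e. Repeat the argument with $G=\min\{F,c\}$, which satisfies $PG=G$, using $\max\{G,-c\}\ge G$ and $\max\{G,-c\}\ge -c$. This gives $PF_{c}=F_{c}$, after which your $\mathscr{L}^{2}$ strict-convexity argument goes through.

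Alternatively, you can bypass truncation entirely. Take the strictly convex Lipschitz function $\phi(t)=\sqrt{1+t^{2}}$. From $F=\frac{1}{N}\sum_{i}F\circ T_{i}$ you get $\phi(F)\le\frac{1}{N}\sum_{i}\phi(F\circ T_{i})$ pointwise. Both sides have the same integral, so equality holds a.e., and strict convexity forces $F\circ T_{i}=F$ a.e.\ for every $i$.

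With this repair your proof is complete. It takes a genuinely different route from the paper, which gives no proof of this statement. The paper only quotes it from \cite{ts:pp} as a Kingman-type theorem. That framing explains the infimum formula and the range $[-\infty,\infty)$, which matter for the subadditive sequences of Theorem~\ref{kingmanomegafreel} used later.

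Your linearisation through the averaging operator $P$ is more economical for this additive statement and yields more. $F$ is finite a.e., and the convergence also holds in $\mathscr{L}^{1}$. The limit and the infimum in the final display are both trivially $\int f\,\mathrm{d}\mu$, since every term of that sequence equals $\int f\,\mathrm{d}\mu$. What your route does not give is the subadditive version, where the sequence is no longer a Ces\`aro average of $P^{j}f$.
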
 

Finally, we state the Furstenberg-Kesten theorem for such simultaneous actions as was considered in Theorem \ref{kingmanomegafreep}. 

\begin{theorem}\cite{ts:pp} 
\label{thm:fkomegafree} 
Consider the maps $T_{i} : X \circlearrowright,\ 1 \le i \le N < \infty$ each of which preserves the probability measure $\mu$ supported on the compact space $X$. Let $\log^{+} \left\| L(x)^{\pm 1} \right\|_{{\rm op}} \in \mathscr{L}^{1} (\mu)$. Then, the extremal Lyapunov exponents of the simultaneous dynamical system governed by the finite set of maps under consideration, 
\begin{eqnarray*} 
\Lambda_{+} (x) & = & \lim_{n\, \to\, \infty} \frac{1}{n} \frac{1}{N^{n}} \sum_{\omega^{n} \in \Sigma_{N}^{n}} \log \left\| \prod_{j\, =\, 0}^{n - 1} \left( L \left( T_{\pi_{j} \left( \omega^{n} \right)} x \right) \right) \right\|_{{\rm op}}\ \ \ \ \text{and} \\ 
\Lambda_{-} (x) & = & \lim_{n\, \to\, \infty} \frac{1}{n} \frac{1}{N^{n}} \sum_{\omega^{n} \in \Sigma_{N}^{n}} \log \left\| \prod_{j\, =\, 0}^{n - 1} \left( L \left( T_{\pi_{j} \left( \omega^{n} \right)} x \right) \right)^{-1} \right\|_{{\rm op}}^{-1}, 
\end{eqnarray*} 
exist for $\mu$-almost every $x \in X$. Further, the functions $\Lambda_{\pm}$ satisfy $\Lambda_{\pm} = \Lambda_{\pm} \circ T_{i}$ for every $i \in \left\{ 1, 2, \cdots, N \right\}$. Moreover, 
\begin{eqnarray*} 
\int \Lambda_{+} \mathrm{d}\mu & = & \lim_{n\, \to\, \infty} \frac{1}{n} \frac{1}{N^{n}} \int \sum_{\omega^{n} \in \Sigma_{N}^{n}} \log \left\| \prod_{j\, =\, 0}^{n - 1} \left( L \left( T_{\pi_{j} \left( \omega^{n} \right)} x \right) \right) \right\|_{{\rm op}} \mathrm{d}\mu\ \ \ \ \text{and} \\ 
\int \Lambda_{-} \mathrm{d}\mu & = & \lim_{n\, \to\, \infty} \frac{1}{n} \frac{1}{N^{n}} \int \sum_{\omega^{n} \in \Sigma_{N}^{n}} \log \left\| \prod_{j\, =\, 0}^{n - 1} \left( L \left( T_{\pi_{j} \left( \omega^{n} \right)} x \right) \right)^{-1} \right\|_{{\rm op}}^{-1} \mathrm{d}\mu. 
\end{eqnarray*} 
\end{theorem}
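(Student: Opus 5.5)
The statement in Theorem \ref{thm:fkomegafree} should follow from the simultaneous Kingman theorem, Theorem \ref{kingmanomegafreep}, once it is reformulated for subadditive sequences. I would run the classical Furstenberg--Kesten argument on the averaged quantities.

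\textbf{Step 1: the averaged cocycle.} For $x \in X$ and $n \in \mathbb{Z}_{+}$ set
\[ a_{n}(x)\ =\ \frac{1}{N^{n}} \sum_{\omega^{n} \in \Sigma_{N}^{n}} \log \left\| \prod_{j\, =\, 0}^{n - 1} \left( L \left( T_{\pi_{j} \left( \omega^{n} \right)} x \right) \right) \right\|_{{\rm op}}. \]
Write any $\omega^{n+m}$ as the concatenation of $\pi_{n}(\omega^{n+m})$ with a word $\eta^{m} \in \Sigma_{N}^{m}$. Submultiplicativity of the operator norm then gives
\[ a_{n+m}(x)\ \le\ a_{n}(x) + \frac{1}{N^{n}} \sum_{\omega^{n} \in \Sigma_{N}^{n}} a_{m} \left( T_{\omega^{n}} x \right). \]
This is subadditivity with respect to the averaging (Markov) operator $Pf = \frac{1}{N} \sum_{i} f \circ T_{i}$, namely $a_{n+m} \le a_{n} + P^{n} a_{m}$. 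Integrability holds because $|a_{1}| \le \log^{+} \|L\|_{{\rm op}} + \log^{+} \|L^{-1}\|_{{\rm op}} \in \mathscr{L}^{1}(\mu)$, and each $T_{i}$ preserves $\mu$.

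\textbf{Step 2: Kingman.} I take Theorem \ref{kingmanomegafreep} to be proved in \cite{ts:pp} for subadditive sequences of exactly this averaged type; its additive formulation is the model case with $a_{n}$ equal to an averaged Birkhoff sum. Applying it yields an a.e.\ limit $\Lambda_{+}(x) = \lim a_{n}(x)/n$. It also gives $\int \Lambda_{+}\,\mathrm{d}\mu = \lim \frac{1}{n}\int a_{n}\,\mathrm{d}\mu = \inf_{n} \frac{1}{n}\int a_{n}\,\mathrm{d}\mu$. The invariance $\Lambda_{+} = \Lambda_{+} \circ T_{i}$ for each $i$ I would get by comparing $a_{n+1}(x)$ with $a_{n}(T_{i}x)$: the $n$-word average from $T_{i}x$ sits inside the $(n+1)$-word average from $x$ over words starting with $i$. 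This gives $\Lambda_{+} \ge P\Lambda_{+}$, and $\mu$-invariance forces equality a.e. Upgrading from equality for the average to equality for each $T_{i}$ separately is the delicate point. I would use the strict-subinvariance argument: $\int (\Lambda_{+} - \Lambda_{+}\circ T_{i})\,\mathrm{d}\mu = 0$, combined with a one-sided inequality obtained from the fact that $\Lambda_{+}$ is also an infimum-type limit along each branch.

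\textbf{Step 3: the lower exponent.} Apply the same argument to the sequence
\[ b_{n}(x) = \frac{1}{N^{n}}\sum_{\omega^{n}} \log \left\| \left(\prod_{j\, =\, 0}^{n - 1} L(T_{\pi_{j}(\omega^{n})}x)\right)^{-1} \right\|_{{\rm op}}. \]
For $b_{n}$ the inverse reverses the order of the product, but submultiplicativity still holds, so $b_{n+m} \le b_{n} + P^{n} b_{m}$ as before. Setting $\Lambda_{-} = -\lim b_{n}/n$ then gives existence, invariance and the integral formula for $\Lambda_{-}$.

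\textbf{Main obstacle.} I expect the hardest step to be the invariance under each individual $T_{i}$ rather than merely under the averaged operator $P$. The limit is only naturally subinvariant for $P$, and each branch $T_{i}$ carries weight $1/N$. If the direct one-sided inequality fails, I would instead realise the system as the skew product on $\Sigma_{N}^{+} \times X$ with the Bernoulli measure $\mathfrak{B}\times\mu$. There $\Lambda_{+}(x)$ appears as the conditional expectation of the limit in Theorem \ref{thm:fkomega}, and invariance can be checked there.
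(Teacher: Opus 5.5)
The paper only quotes this theorem from the authors' earlier work, so there is no in-paper proof to compare with. Your Steps 1 and 3 are correct. Splitting $\omega^{n+m}$ gives $a_{n+m}\le a_n+P^n a_m$ with $Pf=\frac1N\sum_i f\circ T_i$, and reversing the order of the inverse gives the same inequality for $b_n$.

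For existence and the integral formula, Theorem \ref{kingmanomegafreep} is not enough. It is only the additive statement, namely a.e.\ convergence of $\frac1n\sum_{j<n}P^j f$. What you need is the subadditive version recorded later as Theorem \ref{kingmanomegafreel}, read in your normalised form $a_{n+m}\le a_n+P^n a_m$. As printed, \eqref{Nsubadd} drops a factor $N^{p}$ in front of $(\Phi_n)^*$.

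The genuine gap is invariance under each individual $T_i$. You correctly single this out but do not resolve it.

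Comparing $a_{n+1}$ with the $a_n\circ T_i$ in both directions gives $|a_{n+1}-Pa_n|\le g:=\log^{+}\|L\|_{{\rm op}}+\log^{+}\|L^{-1}\|_{{\rm op}}$. This yields only $\Lambda_+=P\Lambda_+=\frac1N\sum_i\Lambda_+\circ T_i$ a.e. There is no branch-wise one-sided inequality such as $\Lambda_+\ge\Lambda_+\circ T_i$ to combine with $\int(\Lambda_+-\Lambda_+\circ T_i)\,\mathrm{d}\mu=0$. The subadditive structure only ever sees the average $P$, and a single branch of weight $1/N$ can be compensated by the others.

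The skew-product fallback merely restates the problem. Take $\mathfrak{B}$ to be the uniform Bernoulli measure and grant $\Lambda_+(x)=\int\lambda_+(\omega,x)\,\mathrm{d}\mathfrak{B}(\omega)$. Invariance of $\lambda_+$ under $(\omega,x)\mapsto(\sigma\omega,T_{\omega_1}x)$ gives $\Lambda_+(T_ix)=\int\lambda_+(i\omega,x)\,\mathrm{d}\mathfrak{B}(\omega)$, which is the average over the cylinder $[i]$. Showing that this is independent of $i$ is exactly the original question.

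The missing idea is strict convexity. For bounded $F$ with $PF=F$, measure preservation gives $\|F\circ T_i\|_2=\|F\|_2$, and hence
\[ \frac{1}{N}\sum_{i=1}^{N}\left\|F\circ T_i-F\right\|_2^2\ =\ 2\|F\|_2^2-2\langle PF,F\rangle\ =\ 0. \]
To apply this to $\Lambda_+$, first note that $\Lambda_+\in\mathscr{L}^1(\mu)$. This follows from $|a_n|\le\sum_{j<n}P^j g$ together with Fatou's lemma. Next, the truncations $\max\{\min\{\Lambda_+,c\},-c\}$ are still $P$-harmonic, because $P$ is positive, fixes constants and preserves $\int\cdot\,\mathrm{d}\mu$. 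For instance, $P\min\{\Lambda_+,c\}\le\min\{\Lambda_+,c\}$ with equal integrals forces equality, and the lower truncation is handled the same way. The identity above then applies to each truncation, and letting $c\to\infty$ gives $\Lambda_+\circ T_i=\Lambda_+$ for every $i$. The same argument handles $\Lambda_-$.
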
 

\section{Main theorems} 
\label{main} 

Equipped with the necessary dynamical notions that we are interested in, we state the main results of this manuscript in this section. We begin with an elementary definition, as may be found in \cite{mv:2014}, and follow it up with a theorem due to Oseledets, called the \emph{multiplicative ergodic theorem}, as may be found in \cite{osel:1968, mv:2014}. 

\begin{definition} 
A finite increasing sequence of vector subspaces of $\mathbb{R}^{d}$, say $\left\{ 0 \right\} \subsetneq V_{1} \subsetneq \cdots \subsetneq V_{r}$ is called a \emph{flag}. If $r \equiv d$ and $\dim (V_{k}) = k$ for all $1 \le k \le r$, the flag is said to be \emph{complete}. 
\end{definition} 

\begin{theorem}\cite{osel:1968, mv:2014} 
\label{thm:osel}
Consider the map $T : X \circlearrowright$ that preserves the probability measure $\mu$ supported on the compact space $X$. For $L : X \longrightarrow {\rm GL}_{d} \left( \mathbb{R} \right)$, assume that the function $\log^{+} \| \left( L(x) \right)^{\pm 1} \|_{{\rm op}} \in \mathscr{L}^{1} (\mu)$. Then, for $\mu$-almost every $x \in X$, there exists 
\begin{enumerate} 
\item a positive integer, say $r \equiv r \left( x \right)$; 
\item real numbers ordered as $\lambda_{1} \left( x \right)\ <\ \lambda_{2} \left( x \right)\ <\ \cdots\ <\ \lambda_{r} \left( x \right)$; and 
\item a flag $V_{\left( x \right)}^{0} \equiv \left\{ 0 \right\} \subsetneq V_{\left( x \right)}^{1} \subsetneq V_{\left( x \right)}^{2} \subsetneq \cdots \subsetneq V_{\left( x \right)}^{r} = \mathbb{R}^{d}$, 
\end{enumerate} 
such that for all $1 \le k \le r$, we have the following: 
\begin{enumerate} 
\item[A.] $(a)\ r \left( T x \right) \equiv r \left( x \right);\ \ \ \ (b)\ \lambda_{k} \left( T x \right) \equiv \lambda_{k} \left( x \right)\ \ \text{and}\ \ \ \ (c)\ L(x) V_{\left( x \right)}^{k} \equiv V_{\left( T x \right)}^{k}$, 
\item[B.] the maps $(a)\ x \longmapsto r(x),\ \ \ \ (b)\ x \longmapsto \lambda_{k} (x)\ \ \text{and}\ \ \ \ (c)\ x \longmapsto V_{(x)}^{k}$ are measurable, 
\item[C.] for every $v \in V_{\left( x \right)}^{k} \setminus V_{\left( x \right)}^{k - 1}$, we have 
\[ \lim\limits_{n\, \to\, \infty} \dfrac{1}{n} \log \| L^{n} (x) v \|\ \ =\ \ \lambda_{k} (x),\ \ \ \text{where}\ \ L^{n} (x)\ =\ \prod_{j\, =\, 0}^{n - 1} L \left( T^{j} (x) \right). \]  
\end{enumerate} 
\end{theorem}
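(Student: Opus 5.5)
The statement is the classical Oseledets theorem for a single map, so I would follow the standard Furstenberg--Kesten/exterior-power route, as in \cite{mv:2014}. Write $L^{n}(x) = \prod_{j=0}^{n-1} L(T^{j}x)$, so that $L^{m+n}(x) = L^{n}(T^{m}x) L^{m}(x)$. For each $1 \le k \le d$, consider the $k$-th exterior power $\Lambda^{k} L^{n}(x)$. Its norm equals $\sigma_{1}\cdots\sigma_{k}$, the product of the $k$ largest singular values of $L^{n}(x)$, and $\log\|\Lambda^{k}L^{n}(x)\|_{\rm op}$ is subadditive along the cocycle. The integrability of $\log^{+}\|L^{\pm1}\|_{\rm op}$ makes the first term integrable. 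Kingman's theorem (the single-map case $N=1$ of Theorem~\ref{thm:ketomega}) then gives almost-sure limits $\chi_{1}+\cdots+\chi_{k}$ of $\frac1n\log\|\Lambda^{k}L^{n}(x)\|_{\rm op}$. From these I obtain the limits $\chi_{k}(x)=\lim\frac1n\log\sigma_{k}(L^{n}(x))$, which are $T$-invariant and measurable. I would then define $\lambda_{1}<\cdots<\lambda_{r}$ as the distinct values among the $\chi_{k}$ and $r(x)$ as their number. This settles A(a), A(b), B(a) and B(b), since invariance of the Kingman limits passes to these functions.

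Next I would build the flag. Let $U_{n,k}(x)$ be the span of the singular directions of $L^{n}(x)$ whose singular values are at most $e^{n\lambda_{k}}$ to first order, namely the eigenspaces of $(L^{n}(x)^{*}L^{n}(x))^{1/2}$ belonging to the exponents $\chi_{i}\le\lambda_{k}$. The key estimate is that these subspaces converge exponentially fast in the Grassmannian. This comes from comparing $L^{n+1}(x)=L(T^{n}x)L^{n}(x)$ with $L^{n}(x)$. Since $\frac1n\log^{+}\|L(T^{n}x)^{\pm1}\|_{\rm op}\to0$ almost surely (by Birkhoff, Theorem~\ref{thm:bet}, applied to an integrable function), the perturbation is subexponential. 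The gap $\lambda_{k+1}-\lambda_{k}>0$ then yields an angle bound $\angle(U_{n,k},U_{n+1,k})\lesssim e^{-n(\lambda_{k+1}-\lambda_{k}-\varepsilon)}$, which is summable. I would set $V^{k}_{(x)}:=\lim_{n}U_{n,k}(x)$; measurability (B(c)) follows because it is a pointwise limit of measurable maps into the Grassmannian.

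Then I would prove the growth statement C. For $v\in V^{k}_{(x)}$, decompose $v$ along $U_{n,k}$ and its orthogonal complement. The component outside has norm at most $\sum_{m\ge n}$ of the angle bounds, which is of order $e^{-n(\lambda_{k+1}-\lambda_{k}-\varepsilon)}$. This gives $\|L^{n}(x)v\|\le e^{n(\lambda_{k}+2\varepsilon)}\|v\|$. For $v\notin V^{k-1}_{(x)}$, the component of $v$ orthogonal to $U_{n,k-1}$ stays bounded below for large $n$. That component is stretched by at least $e^{n(\lambda_{k}-\varepsilon)}$, which gives the matching lower bound. Finally, invariance A(c) follows from C: $L(x)V^{k}_{(x)}$ is exactly the set of vectors at $Tx$ with growth rate at most $\lambda_{k}$, because $L^{n}(Tx)L(x)=L^{n+1}(x)$ and the exponents agree at $x$ and $Tx$. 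That set is $V^{k}_{(Tx)}$.

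The main obstacle is the convergence estimate for $U_{n,k}(x)$. It requires careful linear algebra on singular value decompositions, together with the subexponential control of $\|L(T^{n}x)^{\pm1}\|_{\rm op}$, to turn the spectral gap into a summable bound on the angles. The first approach I would try is to estimate $\|\pi_{U_{n+1,k}^{\perp}}|_{U_{n,k}}\|$ directly by applying $L^{n+1}(x)$ and comparing singular values at times $n$ and $n+1$.
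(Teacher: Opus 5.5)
Your route differs from the paper's. The paper only quotes this theorem. Its proof of the $N$-map generalisation, Theorem~\ref{thm:oselomega}, follows Viana: $V^{k}$ is defined as a sublevel set of $\limsup_{n}\frac{1}{n}\log\|L^{n}(x)v\|$, so A and the upper bound in C hold by construction. The real work there is showing the limit exists, via Furstenberg--Kesten on invariant sub-bundles and a block-triangular argument.

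Your singular-value (Raghunathan--Ruelle) route is a legitimate alternative. Your first paragraph, the convergence of $U_{n,k}(x)$ and the lower bound in C are fine. You should cite Kingman's subadditive theorem directly, though, since Theorem~\ref{thm:ketomega} with $N=1$ is just Birkhoff's theorem.

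In your route the upper bound in C is the hard half, and your argument for it has a genuine gap. Take $v\in V^{k}_{(x)}$ with $\|v\|=1$, and let $b_{n}$ be its component orthogonal to $U_{n,k}(x)$. Summing the angle bounds gives only $\|b_{n}\|\le Ce^{-n(\lambda_{k+1}-\lambda_{k}-\varepsilon)}$. But $b_{n}$ lies in the span of all the faster singular directions, which $L^{n}(x)$ may stretch by $e^{n(\lambda_{r}+\varepsilon)}$. So you only get $\|L^{n}(x)b_{n}\|\le Ce^{n(\lambda_{k}+\lambda_{r}-\lambda_{k+1}+2\varepsilon)}$, which exceeds $e^{n(\lambda_{k}+2\varepsilon)}$ whenever $k\le r-2$. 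Hence $\limsup_{n}\frac{1}{n}\log\|L^{n}(x)v\|\le\lambda_{k}$ is unproved for those $k$. The same goes for A(c), which you derive from C.

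The missing ingredient is a layer-by-layer estimate. Let $Z_{n,j}=U_{n,j}(x)\cap U_{n,j-1}(x)^{\perp}$ be the singular directions of $L^{n}(x)$ with exponent $\lambda_{j}$, and let $P_{Z}$ be orthogonal projection onto $Z$. You need, for all $j>k$ and $m\ge n\ge n_{0}(x,\varepsilon)$,
\[ \| P_{Z_{n,j}}|_{U_{m,k}(x)} \|\ \le\ C\,e^{-n(\lambda_{j}-\lambda_{k}-\varepsilon)}. \]
Letting $m\to\infty$ gives the same bound on $V^{k}_{(x)}$. Since $L^{n}(x)$ maps the $Z_{n,i}$ to mutually orthogonal subspaces, each fast component then contributes at most $Ce^{n(\lambda_{k}+2\varepsilon)}$, and the sharp upper bound follows.

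The case $m=n+1$ is your one-step argument run backwards. Compare $\|L^{n}(x)w\|$ with $\|L^{n+1}(x)w\|$ for $w\in U_{n+1,k}(x)$, using that $\|L(T^{n}x)^{-1}\|_{\rm op}$ is subexponential.

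The general case does not follow from summing the consecutive angles between $U_{l,k}(x)$ and $U_{l+1,k}(x)$, since that only ever yields the rate $\lambda_{k+1}-\lambda_{k}$. Instead, argue by induction on $j-k$. At each intermediate time $l$, insert $P_{U_{l,k}}+\sum_{i>k}P_{Z_{l,i}}$. Use the one-step bound for $(k,i)$ and, when $k<i<j$, the induction hypothesis for $(i,j)$. The increments are summable in $l\ge n$ because $e^{-n(\lambda_{j}-\lambda_{i})}e^{-l(\lambda_{i}-\lambda_{k})}=e^{-n(\lambda_{j}-\lambda_{k})}e^{-(l-n)(\lambda_{i}-\lambda_{k})}$.

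This multi-layer lemma, not the one-step convergence of $U_{n,k}$ you flagged, is the real technical core of your approach.
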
 

We now state the multiplicative ergodic theorem for the dynamical system arising from $S$ and the set-valued dynamics arising from $R$ defined on $\Sigma_{N}^{+} \times X \times \mathbb{R}^{d}$ and $X \times \mathbb{R}^{d}$ respectively, that we shall prove one after another in this paper. 

\begin{theorem} 
\label{thm:oselomega} 
Consider the maps $T_{i} : X \circlearrowright,\ 1 \le i \le N < \infty$ each of which preserves the probability measure $\mu$ supported on the compact space $X$. Let $S$ be defined on the product space $\Sigma_{N}^{+} \times X \times \mathbb{R}^{d}$, as defined in Equation \eqref{actionofS}. Let $L : X \longrightarrow {\rm GL}_{d} \left( \mathbb{R} \right)$ be such that $\log^{+} \| \left( L(x) \right)^{\pm 1} \|_{{\rm op}} \in \mathscr{L}^{1} (\mu)$. Then, for $\left( \mathfrak{B} \times \mu \right)$-almost every $\left( \omega,\, x \right) \in \Sigma_{N}^{+} \times X$, there exists 
\begin{enumerate} 
\item a positive integer, say $r \equiv r \left( \omega, x \right)$; 
\item real numbers ordered as $\lambda_{1} \left( \omega, x \right)\ <\ \lambda_{2} \left( \omega, x \right)\ <\ \cdots\ <\ \lambda_{r} \left( \omega, x \right)$; and 
\item a flag $V_{\left(\omega,\, x\right)}^{0} \equiv \left\{ 0 \right\} \subsetneq V_{\left(\omega,\, x\right)}^{1} \subsetneq V_{\left(\omega,\, x\right)}^{2} \subsetneq \cdots \subsetneq V_{\left(\omega,\, x\right)}^{r} = \mathbb{R}^{d}$, 
\end{enumerate} 
such that for all $1 \le k \le r$, we have the following: 
\begin{enumerate} 
\item[A.] \begin{enumerate} \item $r \left( \omega, x \right) \equiv r \left( \sigma \omega, T_{\pi_{1} \left( \omega \right)} x \right) \equiv r \left( \sigma^{2} \omega, T_{\pi_{2} \left( \omega \right)} x \right) \equiv \cdots$; 
\item $\lambda_{k} \left( \omega, x \right) \equiv \lambda_{k} \left( \sigma \omega, T_{\pi_{1} \left( \omega \right)} x \right) \equiv \lambda_{k} \left( \sigma^{2} \omega, T_{\pi_{2} \left( \omega \right)} x \right) \equiv \cdots$ and 
\item $L \left( T_{\pi_{j} \left( \omega \right)} x \right) V_{\left( \sigma^{j} \omega,\, T_{\pi_{j} \left( \omega \right)} x \right)}^{k} \equiv V_{\left( \sigma^{j + 1} \omega,\, T_{\pi_{j + 1} \left( \omega \right)} x \right)}^{k}$, for every $j \ge 0$, 
\end{enumerate} 
\item[B.] The maps $(a)\ (\omega, x) \longmapsto r(\omega, x),\ \ \ (b)\ (\omega, x) \longmapsto \lambda_{k} (\omega, x)\ \ \text{and}\ \ \ (c)\ (\omega, x) \longmapsto V_{\left( \omega,\, x \right)}^{k}$ are measurable, 
\item[C.] for every $v \in V_{\left( \omega,\, x \right)}^{k} \setminus V_{\left( \omega,\, x \right)}^{k - 1}$, we have 
\[ \lim\limits_{n\, \to\, \infty} \dfrac{1}{n} \log \| L_{\omega}^{n} (x) v \|\ \ =\ \ \lambda_{k} \left( \omega, x \right),\ \ \text{where}\ \ L_{\omega}^{n} (x) = \prod_{j\, =\, 0}^{n - 1} L \left( T_{\pi_{j} \left( \omega \right)} (x) \right). \] 
\end{enumerate} 
\end{theorem}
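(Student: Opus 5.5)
The natural approach is to reduce Theorem \ref{thm:oselomega} to the classical Theorem \ref{thm:osel} by passing to the skew-product. Define $\tau : \Sigma_{N}^{+} \times X \circlearrowright$ by $\tau(\omega, x) = (\sigma\omega, T_{\pi_{1}(\omega)} x)$. This is exactly the first two coordinates of $S$ in Equation \eqref{actionofS}. Define the cocycle generator $\widehat{L}(\omega, x) := L(x)$. Then $\tau^{j}(\omega,x) = (\sigma^{j}\omega, T_{\pi_{j}(\omega)}x)$, and the product $\prod_{j=0}^{n-1}\widehat{L}(\tau^{j}(\omega,x))$ coincides with $L^{n}_{\omega}(x)$.

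The first step is to check that $\tau$ preserves $\mathfrak{B} \times \mu$. Take a measurable rectangle $[a] \times A$, where $[a]$ is a cylinder. Its preimage under $\tau$ is $\bigcup_{i=1}^{N} [i\,a] \times T_{i}^{-1}A$. Its measure is $\sum_{i} p_{i}\,\mathfrak{B}([a])\,\mu(T_{i}^{-1}A) = \mathfrak{B}([a])\mu(A)$, using the product structure of the Bernoulli measure and the $T_{i}$-invariance of $\mu$. The rectangles form a $\pi$-system generating the product $\sigma$-algebra, so invariance extends to all measurable sets. Here "compact space" is only used to make $\Sigma_{N}^{+}\times X$ again a compact probability space, so that Theorem \ref{thm:osel} formally applies.

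The second step is integrability. We have $\log^{+}\|\widehat{L}(\omega,x)^{\pm1}\|_{\rm op} = \log^{+}\|L(x)^{\pm1}\|_{\rm op}$. Integrating against $\mathfrak{B}\times\mu$ gives $\int\log^{+}\|L^{\pm1}\|_{\rm op}\,\mathrm{d}\mu < \infty$ by Fubini.

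Theorem \ref{thm:osel} then applies to $(\tau, \widehat{L}, \mathfrak{B}\times\mu)$. For a.e. $(\omega,x)$ it yields $r(\omega,x)$, the exponents $\lambda_{k}(\omega,x)$ and a flag $V^{k}_{(\omega,x)}$. Conclusion A of Theorem \ref{thm:osel}, applied at $\tau^{j}(\omega,x)$, gives A(a), A(b) and A(c) of Theorem \ref{thm:oselomega}. Iteration is legitimate because the full-measure set can be replaced by $\bigcap_{j}\tau^{-j}$ of itself, which still has full measure by invariance. Conclusion B transfers verbatim, and C becomes C once the products are identified.

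I expect the main obstacle to be the measure-preservation step, and specifically whether the paper's $\mathfrak{B}$ is compatible with it. Invariance holds for any Bernoulli measure, including nonuniform probability vectors, since $\sum_{i}p_{i}=1$ is all that is used. I would nonetheless verify the cylinder computation carefully and invoke the $\pi$–$\lambda$ theorem.

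A secondary point is that Theorem \ref{thm:osel} is stated for compact $X$, while $\Sigma_{N}^{+}\times X$ is compact because it is a product of compact spaces. Should one instead want a self-contained argument following Oseledets' route, the plan would be different. One would use the exterior powers $\Lambda^{k}L^{n}_{\omega}$ together with Theorem \ref{thm:fkomega} to get the sums $\lambda_{1}+\dots+\lambda_{k}$. One would then build the flag as limits of the singular subspaces of $(L^{n}_{\omega})^{*}L^{n}_{\omega}$, with the convergence controlled by the $\frac1n\log\|L(\tau^{n}\cdot)\|\to0$ estimate from Birkhoff's theorem on the skew product. I would rely on the reduction above.
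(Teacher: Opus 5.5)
Your reduction is correct and complete, and it takes a genuinely different route from the paper's. You treat the random system as a single measure-preserving skew product $\tau(\omega,x)=(\sigma\omega,T_{\pi_{1}(\omega)}x)$ on $(\Sigma_{N}^{+}\times X,\mathfrak{B}\times\mu)$, with generator $\widehat{L}(\omega,x)=L(x)$. You check invariance on cylinder rectangles and integrability by Fubini, read off A, B and C from Theorem~\ref{thm:osel}, and propagate A along the whole orbit via $\bigcap_{j}\tau^{-j}G$.

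The paper never applies Theorem~\ref{thm:osel} to $\tau$, although it uses exactly this invariance when it invokes Birkhoff's theorem in the proof of Corollary~\ref{cor:nustar}. Instead it reruns Oseledets' argument from scratch, which amounts to re-proving Theorem~\ref{thm:osel} for $\tau$:
\begin{enumerate}
\item the flag is built as sublevel sets of $\lambda(\omega,x,\cdot)$ via Lemma~\ref{fonrd};
\item measurability comes from Kuratowski--Ryll-Nardzewski and the projection lemma;
\item Part C uses a Kingman-type theorem for $\sup/\inf$ over projective space, invariant measures built from Markov kernels (Proposition~\ref{nustar}), and growth rates on invariant sub-bundles (Proposition~\ref{prop:usein10});
\item it finishes with the block-triangular decomposition, tempered estimates (Lemmas~\ref{lem:twoparts} and~\ref{lem:estimates}), and induction on $k$.
\end{enumerate}

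Your route buys brevity and inherits full rigour from the classical theorem. It is also more general: the invariance computation only uses $\sum_{i}\mathfrak{B}([i\,a])=\mathfrak{B}(\sigma^{-1}[a])=\mathfrak{B}([a])$, so any $\sigma$-invariant measure on $\Sigma_{N}^{+}$ works, not just Bernoulli ones.

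It also needs only almost-everywhere statements in $(\omega,x)$. The paper instead leans on Theorems~\ref{thm:ketomega} and~\ref{thm:fkomega}, which are phrased for every fixed $\omega$, and that is more than holds in general. For instance, take $X$ the circle with Lebesgue measure, $T_{1}=\mathrm{id}$, $T_{2}$ an irrational rotation, and $\omega$ made of alternating blocks of $1$'s and $2$'s of rapidly increasing length. The averages of a non-constant continuous function then oscillate on a set of positive measure. Your skew-product viewpoint yields, via Fubini, exactly the $\mathfrak{B}$-a.e.\ versions that are true.

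What the paper's self-contained machinery buys is a template it then reruns for Theorem~\ref{thm:oselnoomega}. There the exponents are averages over all words and the flag must depend on $x$ alone, and that result does not follow from the skew-product theorem by mere relabelling.
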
 

Finally, we state the multiplicative ergodic theorem for simultaneous dynamics that we intend to study under $R$ defined on $X \times \mathbb{R}^{d}$, where we no longer have the luxury provided by $\Sigma_{N}^{+}$, that essentially led us in the direction of the evolution of dynamics. 

\begin{theorem} 
\label{thm:oselnoomega}
Consider the maps $T_{i} : X \circlearrowright,\ 1 \le i \le N < \infty$ each of which preserves the probability measure $\mu$ supported on the compact space $X$. Let $R$ be the set-valued map defined on subsets of the product space $X \times \mathbb{R}^{d}$, as given in Equation \eqref{actionofR}. Let $L : X \longrightarrow {\rm GL}_{d} \left( \mathbb{R} \right)$ be such that $\log^{+} \left\| \left( L(x) \right)^{\pm 1} \right\|_{{\rm op}} \in \mathscr{L}^{1} (\mu)$. Then, for $\mu$-a.e. $x \in X$, defining $E_{0} = \left\{ x \right\}$, there exists 
\begin{enumerate} 
\item a positive integer, say $r \equiv r \left( E_{0} \right)$; 
\item real numbers ordered as $\Lambda_{1} \left( E_{0} \right)\ <\ \Lambda_{2} \left( E_{0} \right)\ <\ \cdots\ <\ \Lambda_{r} \left( E_{0} \right)$; and 
\item a flag $V_{E_{0}}^{0} \equiv \left\{ 0 \right\} \subsetneq V_{E_{0}}^{1} \subsetneq V_{E_{0}}^{2} \subsetneq \cdots \subsetneq V_{E_{0}}^{r} = \mathbb{R}^{d}$, 
\end{enumerate} 
such that for all $1 \le k \le r$, we have the following: Suppose $E_{l} = \bigcup\limits_{\omega^{l}\, \in \Sigma_{N}^{l}} T_{\omega^{l}} \left( x \right)$ for every $l \in \mathbb{Z}_{+}$, then 
\begin{enumerate} 
\item[A.] \begin{enumerate} \item $r \left( E_{0} \right) \equiv r \left( E_{1} \right) \equiv r \left( E_{2} \right) \equiv \cdots$; 
\item $\Lambda_{k} \left( E_{0} \right) \equiv \Lambda_{k} \left( E_{1} \right) \equiv \Lambda_{k} \left( E_{2} \right) \equiv \cdots$;  
\item $\sum\limits_{\omega^{l}\, \in\, \Sigma_{N}^{l}} L \left( T_{\omega^{l}} x \right) V_{E_{l}}^{k} \equiv V_{E_{l + 1}}^{k}$, for every $l \in \mathbb{Z}_{+} \cup \{ 0 \}$, 
\end{enumerate} 
\item[B.] The maps $(a)\ x \longmapsto r \left( E_{0} \right),\ \ \ (b)\ x \longmapsto \Lambda_{k} \left( E_{0} \right)\ \ \text{and}\ \ \ (c)\ x \longmapsto V_{E_{0}}^{k}$ are measurable, 
\item[C.] for every $v \in V_{E_{0}}^{k} \setminus V_{E_{0}}^{k - 1}$, we have 
\[ \lim\limits_{n\, \to\, \infty} \dfrac{1}{n} \dfrac{1}{N^{n}} \sum_{\omega^{n}\, \in\, \Sigma_{N}^{n}} \log \left\| \left( L_{\omega^{n}}^{n} (x) \right) v \right\|\ \ =\ \ \Lambda_{k} \left( x \right),\ \ \text{where}\ \ L_{\omega^{n}}^{n} (x) = \prod_{j\, =\, 0}^{n - 1} L \left( T_{\pi_{j} \left( \omega^{n} \right)} (x) \right). \] 
\end{enumerate} 
\end{theorem}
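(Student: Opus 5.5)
We reduce Theorem \ref{thm:oselnoomega} to Theorem \ref{thm:oselomega} by averaging over words. The averaged quantity $\frac{1}{N^{n}}\sum_{\omega^{n}\in\Sigma_{N}^{n}} \log\|L^{n}_{\omega^{n}}(x)v\|$ is exactly $\int_{\Sigma_{N}^{+}} \log\|L^{n}_{\omega}(x)v\|\,\mathrm{d}\mathfrak{B}(\omega)$ for the uniform Bernoulli measure $\mathfrak{B}$, because $L^{n}_{\omega}$ depends only on $\pi_{n}(\omega)$. So we work on the skew product $\Sigma_{N}^{+}\times X$ with $\mathfrak{B}\times\mu$, where Theorem \ref{thm:oselomega} supplies, for a.e. $(\omega,x)$, exponents $\lambda_{k}(\omega,x)$ and flags $V^{k}_{(\omega,x)}$. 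We then define the set-valued objects by integrating in $\omega$.

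First we fix $x$ in the full-measure set where the $\omega$-section of the good set from Theorem \ref{thm:oselomega} has full $\mathfrak{B}$-measure; Fubini gives this. For each $v\neq 0$ we define $\chi(\omega,x,v)=\lim_{n}\frac1n\log\|L^{n}_{\omega}(x)v\|$, which exists by part C of Theorem \ref{thm:oselomega} and equals some $\lambda_{k}(\omega,x)$. We then set $\Lambda(x,v)=\int\chi(\omega,x,v)\,\mathrm{d}\mathfrak{B}(\omega)$. To pass the limit through the average, we use the sandwich
\[ -\frac1n\sum_{j=0}^{n-1}\log^{+}\|L(T_{\pi_{j}(\omega)}x)^{-1}\|_{\rm op}\ \le\ \frac1n\log\frac{\|L^{n}_{\omega}(x)v\|}{\|v\|}\ \le\ \frac1n\sum_{j=0}^{n-1}\log^{+}\|L(T_{\pi_{j}(\omega)}x)\|_{\rm op}. \]
The two bounding averages converge, after averaging over words, by Theorem \ref{kingmanomegafreep}, and in $\mathscr{L}^{1}(\mathfrak{B})$ by the $\mathscr{L}^{1}$ form of the Birkhoff theorem on the skew product, since $\mathfrak{B}\times\mu$ is invariant under $(\omega,x)\mapsto(\sigma\omega,T_{\omega_{1}}x)$. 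A generalised dominated (Vitali-type) convergence argument then yields
\[ \lim_{n}\frac{1}{N^{n}}\sum_{\omega^{n}}\frac1n\log\|L^{n}_{\omega^{n}}(x)v\|=\Lambda(x,v). \]

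Next we build the flag. The function $v\mapsto\Lambda(x,v)$ takes finitely many values, since $\chi$ takes at most $d$ values for each $\omega$ and the level structure is governed by which $V^{k}_{(\omega,x)}$ contains $v$. We want the sublevel sets $\{v:\Lambda(x,v)\le c\}\cup\{0\}$ to be subspaces. This needs $\chi(\omega,x,u+w)\le\max\{\chi(\omega,x,u),\chi(\omega,x,w)\}$ pointwise in $\omega$, with equality when the two values differ. Integration only preserves the inequality, so this is the main obstacle. Our first attempt is to define $V^{k}_{E_{0}}$ as the set of $v$ whose exponent function $\omega\mapsto\chi(\omega,x,v)$ is dominated a.e. by a fixed level function $\omega\mapsto\lambda_{k}(\omega,x)$, which is a subspace for each $k$. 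Ergodic-decomposition considerations (the exponents are invariant along the skew product) let us identify that the relevant flags are indexed consistently in $\omega$. Strictness and the ordering $\Lambda_{1}<\dots<\Lambda_{r}$ then follow by discarding repeated values. Setting $\Lambda_{k}(E_{0})=\int\lambda_{k}(\omega,x)\,\mathrm{d}\mathfrak{B}$ on the appropriate index set proves C.

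Invariance (A) comes from A of Theorem \ref{thm:oselomega}, together with $T_{i}$-invariance of the averaged limits as in Theorem \ref{thm:fkomegafree}. Splitting $\omega=\omega_{1}\sigma\omega$ gives $\Lambda(x,v)=\frac1N\sum_{i}\Lambda(T_{i}x,L(x)v)$. Combined with invariance of the exponents along $T_{i}$, this yields $r(E_{l})$ and $\Lambda_{k}(E_{l})$ constant in $l$, and it also gives the sum-of-images identity for the subspaces in A(c), since the sum of subspaces $L(T_{\omega^{l}}x)V^{k}_{E_{l}}$ is again characterised by the same sublevel condition. Measurability (B) follows because every object is an integral over $\omega$, or a sublevel set, of measurable functions from part B of Theorem \ref{thm:oselomega}, and because of measurable selection on the Grassmannian.
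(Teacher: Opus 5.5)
Your route differs from the paper's. The paper never passes through Theorem \ref{thm:oselomega}: it defines the word-averaged $\limsup$ $\Lambda(E,F)$ directly and asserts the ultrametric property \eqref{Lamdav1v2}. It then applies Lemma \ref{fonrd} to obtain $r$, the $\Lambda_k$ and the flag, and redoes Sections \ref{markov}--\ref{partcomega} in averaged form (Propositions \ref{Ginvariantmeasure} and \ref{operatoronvx}).

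Your first step identifies $\lim_n \frac{1}{N^n}\sum_{\omega^n}\frac{1}{n}\log\|L^n_{\omega^n}(x)v\|$ with $\int\chi(\omega,x,v)\,\mathrm{d}\mathfrak{B}(\omega)$ for every $v\neq 0$. This is sound in outline, and it is a real advantage of your route: existence of the averaged limit comes straight from Theorem \ref{thm:oselomega}. It needs one repair. The $\mathscr{L}^1$ Birkhoff theorem on the skew product gives convergence in $\mathscr{L}^1(\mathfrak{B}\times\mu)$, not in $\mathscr{L}^1(\mathfrak{B})$ for a fixed $x$. For fixed $x$, use Scheff\'e instead:
\begin{enumerate}
\item the nonnegative bounding averages $h_n$ converge $\mathfrak{B}$-a.s.;
\item their $\mathfrak{B}$-integrals converge by Theorem \ref{kingmanomegafreep};
\item Fatou gives $\int\lim_n h_n\,\mathrm{d}\mathfrak{B}\le\lim_n\int h_n\,\mathrm{d}\mathfrak{B}$, with equality for $\mu$-a.e. $x$ because both sides have the same $\mu$-integral.
\end{enumerate}

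The genuine gap is the construction of the flag and part C. Integration does not preserve the inequality, contrary to what you say: $\int\max\{f,g\}\,\mathrm{d}\mathfrak{B}\ge\max\{\int f\,\mathrm{d}\mathfrak{B},\int g\,\mathrm{d}\mathfrak{B}\}$. So neither the ultrametric inequality nor its equality case passes from $\chi(\omega,x,\cdot)$ to $\Lambda(x,\cdot)$. Nothing you say shows that $\Lambda(x,\cdot)$ takes finitely many values either. Indeed $\Lambda(x,v)=\sum_j\lambda_j(x)\,\mathfrak{B}\{\omega : v\in V^j_{(\omega,x)}\setminus V^{j-1}_{(\omega,x)}\}$, and these probabilities may a priori vary continuously with $v$.

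Your workaround $V^k_{E_0}=\{v : v\in V^k_{(\omega,x)}\ \text{for}\ \mathfrak{B}\text{-a.e.}\ \omega\}$ does give subspaces. But for $v\in V^k_{E_0}\setminus V^{k-1}_{E_0}$ you only know $\chi(\omega,x,v)=\lambda_k(x)$ on a set of $\omega$ of \emph{positive} measure. So $\int\chi(\omega,x,v)\,\mathrm{d}\mathfrak{B}$ can be strictly below $\lambda_k(x)$, and C does not follow.

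What is missing is a zero--one law: for $\mu$-a.e. $x$, every $v\neq 0$ and every $j$, $\mathfrak{B}\{\omega : v\in V^j_{(\omega,x)}\}\in\{0,1\}$. This is the non-randomness of the filtration in the Furstenberg--Kifer theorem for random matrix products and its extensions to random compositions of maps. Invariance and ergodic decomposition cannot supply it. They do make $r$ and the $\lambda_k$ independent of $\omega$, since invariant functions of $(\omega,x)\mapsto(\sigma\omega,T_{\omega_1}x)$ depend on $x$ alone. But the subspaces $V^k_{(\omega,x)}$ are only equivariant and in general depend on the whole future word.

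The function $p(x,v)=\mathfrak{B}\{\omega : v\in V^j_{(\omega,x)}\}$ is harmonic for the chain $(T_{\pi_n(\omega)}x,[L^n_\omega(x)v])$. By martingale convergence it tends to $0$ or $1$ along a.e. path, but proving $p\in\{0,1\}$ is a genuine additional theorem.

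The paper's route meets the same obstacle. Its inequality \eqref{Lamdav1v2} for the averaged $\limsup$ is asserted as easily verified, but it does not follow from \eqref{lamda(v1+v2)} by averaging, for exactly the reason above. Following the paper would therefore not close the gap either.
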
 

\section{A few simple lemmas} 
\label{sec:afsl} 

In this section, we concentrate on the action of $S$ on the product space $\Sigma_{N}^{+} \times X \times \mathbb{R}^{d}$, as described in Equation \eqref{actionofS}. We start by defining the following quantity. For any triple $\left( \omega,\, x,\, v \right) \in \Sigma_{N}^{+} \times X \times \mathbb{R}^{d}$, define 
\begin{equation} 
\label{eqn:loxv}
\lambda \left( \omega,\, x,\, v \right)\ \ =\ \ \limsup_{n\, \to\, \infty} \frac{1}{n} \log \left\| \prod_{j\, =\, 0}^{n - 1} \left( L \circ T_{\pi_{j} \left( \omega \right)} x \right) v \right\|, 
\end{equation} 
where we allow $\lambda \left( \omega,\, x,\, v \right)$ to take the value $- \infty$ when we consider $v \equiv 0 \in \mathbb{R}^{d}$. Thus, with the caveat that $\lambda = - \infty$ for the zero vector only, we shall now be interested in the quantity $\lambda$ when $v \in \mathbb{R}^{d}_{*} = \mathbb{R}^{d} \setminus \{ 0 \}$. Then, for any vector $v \in \mathbb{R}^{d}_{*}$, we have from the definition that 
\begin{equation} 
\label{lamda(cv)} 
\lambda \left( \omega,\, x,\, c v \right)\ =\ \lambda \left( \omega,\, x,\, v \right)\ \forall c \ne 0. 
\end{equation} 
Moreover, suppose $\{ a_{m} \}$ and $\{ b_{m} \}$ are positive sequences, then, 
\[ \limsup_{m\, \to\, \infty} \frac{1}{m} \log (a_{m} + b_{m})\ \ =\ \ \max \left\{ \limsup_{m\, \to\, \infty} \frac{1}{m} \log a_{m},\ \limsup_{m\, \to\, \infty} \frac{1}{m} \log b_{m} \right\}. \] 
Using the above fact, we obtain for any pair of vectors $v_{1}, v_{2} \in \mathbb{R}^{d}_{*}$, 
\begin{equation} 
\label{lamda(v1+v2)} 
\lambda \left( \omega,\, x,\, v_{1} + v_{2} \right)\ \le\ \max \left\{ \lambda \left( \omega,\, x,\, v_{1} \right),\ \lambda \left( \omega,\, x,\, v_{2} \right) \right\}. 
\end{equation} 
Moreover, 
\begin{equation} 
\label{lamdax=tomnx}
\lambda \left( \sigma \omega,\, T_{\pi_{1} \left( \omega \right)} x,\, L(x) v \right)\ \ =\ \ \limsup_{n\, \to\, \infty} \frac{1}{n} \log \left\| \prod_{j\, =\, 0}^{n} \left( L \circ T_{\pi_{j} \left( \omega \right)} x \right) v \right\|\ \ =\ \ \lambda \left( \omega,\, x,\, v \right). 
\end{equation} 

We now write a few lemmas and include short proofs of the same, for the convenience of the readers. 

\begin{lemma} 
\label{lambdapmbound}
Let $\omega \in \Sigma_{N}^{+}$ be fixed. Then, for $\mu$-almost every $x \in X$ and for any vector $v \in \mathbb{R}^{d}_{*}$, we have 
\[ \lambda_{-}^{\omega} \left( x \right)\ \ \le\ \ \lambda \left( \omega,\, x,\, v \right)\ \ \le\ \ \lambda_{+}^{\omega} \left( x \right). \]
\end{lemma}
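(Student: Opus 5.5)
We will derive both inequalities from elementary operator-norm bounds, followed by taking the $\limsup$ and invoking the limits whose existence is guaranteed by Theorem \ref{thm:fkomega}. Fix $\omega$ and let $X_{\omega} \subseteq X$ be the full $\mu$-measure set on which both limits in Equation \eqref{lambdaplmiomega} exist. Take $x \in X_{\omega}$ and $v \in \mathbb{R}^{d}_{*}$, and write $L_{\omega}^{n}(x) = \prod_{j=0}^{n-1} L\left( T_{\pi_{j}(\omega)} x \right)$. This is an invertible matrix, being a product of elements of ${\rm GL}_{d}(\mathbb{R})$. The crucial point is that the exceptional set depends only on $\omega$ and not on $v$, so the conclusion holds simultaneously for all $v$.

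\emph{Upper bound.} By the definition of the operator norm, $\| L_{\omega}^{n}(x) v \| \le \| L_{\omega}^{n}(x) \|_{{\rm op}} \|v\|$. Taking logarithms and dividing by $n$ gives
\[ \frac{1}{n} \log \| L_{\omega}^{n}(x) v \| \ \le\ \frac{1}{n} \log \| L_{\omega}^{n}(x) \|_{{\rm op}} + \frac{1}{n} \log \|v\|. \]
Since $\|v\| > 0$ is fixed, the last term tends to $0$. We then take $\limsup$ on both sides. The first term on the right converges to $\lambda_{+}^{\omega}(x)$ by Theorem \ref{thm:fkomega}, which yields $\lambda(\omega, x, v) \le \lambda_{+}^{\omega}(x)$.

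\emph{Lower bound.} Write $v = \left( L_{\omega}^{n}(x) \right)^{-1} L_{\omega}^{n}(x) v$. This gives $\|v\| \le \| (L_{\omega}^{n}(x))^{-1} \|_{{\rm op}} \| L_{\omega}^{n}(x) v \|$, and hence
\[ \| L_{\omega}^{n}(x) v \| \ \ge\ \| (L_{\omega}^{n}(x))^{-1} \|_{{\rm op}}^{-1} \|v\|. \]
Taking $\frac{1}{n}\log$ preserves the inequality. Because the right-hand side converges to $\lambda_{-}^{\omega}(x)$, its limit coincides with its $\limsup$, and the $\limsup$ of the left-hand side is at least this limit. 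Therefore $\lambda(\omega, x, v) \ge \lambda_{-}^{\omega}(x)$.

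No step is a genuine obstacle. The only point needing care is that both limits are finite, or at least exist, on a $v$-independent full-measure set, which is exactly what Theorem \ref{thm:fkomega} provides. Should $\lambda_{\pm}^{\omega}(x)$ take the values $\pm\infty$ on a null set, we simply discard that set as well.
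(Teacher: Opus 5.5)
Your proposal is correct and is essentially the paper's proof. Both rest on the two-sided bound $\| ( L_{\omega}^{n}(x) )^{-1} \|_{{\rm op}}^{-1} \|v\| \le \| L_{\omega}^{n}(x) v \| \le \| L_{\omega}^{n}(x) \|_{{\rm op}} \|v\|$, followed by taking $\limsup_{n \to \infty} \frac{1}{n} \log$ and invoking Theorem \ref{thm:fkomega}; your remark that the exceptional null set depends only on $\omega$ and not on $v$ makes explicit a point the paper leaves implicit.
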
 

\begin{proof} 
For any fixed $\omega \in \Sigma_{N}^{+}$ and $x \in X$, recall the notation $\displaystyle{L_{\omega}^{n} (x) = \prod\limits_{j\, =\, 0}^{n - 1} \left( L \circ T_{\pi_{j} \left( \omega \right)} x \right)}$. Note that $L_{\omega}^{n} (x)$ is an invertible matrix, since each of the matrices in the product is invertible. We now fix some $v \in \mathbb{R}^{d}_{*}$ and observe that $\| v \| \le \left\| \left( L_{\omega}^{n} (x) \right)^{-1} \right\| \left\| L_{\omega}^{n} (x) v \right\|$. Since $\left\| \left( L_{\omega}^{n} (x) \right)^{-1} \right\| \ne 0$, we write this as 
\[ \left\| \left( L_{\omega}^{n} (x) \right)^{-1} \right\|^{-1} \|v \|\ \le\ \left\| \left( L_{\omega}^{n} (x) \right) v \right\|\ \le\ \left\| \left( L_{\omega}^{n} (x) \right) \right\| \| v \|. \] 
Thus, owing to Theorem \ref{thm:fkomega}, we have for $\mu$-almost every $x \in X$, 
\begin{eqnarray*} 
\limsup_{n\, \to\, \infty} \frac{1}{n} \log \left( \left\| \left( L_{\omega}^{n} (x) \right)^{-1} \right\|^{-1} \|v \| \right) & \le & \limsup_{n\, \to\, \infty} \frac{1}{n} \log \left( \left\| \left( L_{\omega}^{n} (x) \right) v \right\| \right) \\ 
& \le & \limsup_{n\, \to\, \infty} \frac{1}{n} \log \left( \left\| \left( L_{\omega}^{n} (x) \right) \right\| \| v \| \right), 
\end{eqnarray*} 
proving the inequality mentioned in the Lemma. 
\end{proof} 

\begin{lemma} 
\label{fonrd}
Suppose the function $g : \mathbb{R}^{d} \longrightarrow \mathbb{R} \cup \{ - \infty \}$ satisfies the following properties for all non-zero vectors $v_{1}, v_{2}$ and $v$ in $\mathbb{R}^{d}$: 
\begin{enumerate} 
\item $g(v_{1} + v_{2}) \le \max \left\{ g(v_{1}),\; g(v_{2}) \right\}$, 
\item $g(c v) = g(v)$ for all $c \ne 0$, 
\item $g(0) = - \infty$. 
\end{enumerate} 
Then, 
\begin{enumerate} 
\item[(a)] if $g(v_{1}) \ne g(v_{2})$, then $g(v_{1} + v_{2}) = \max \left\{ g(v_{1}),\; g(v_{2}) \right\}$, 
\item[(b)] if there exists a collection of $m$ vectors, say $V = \left\{ v_{1}, v_{2}, \cdots, v_{m} \right\}$ in $\mathbb{R}^{d}_{*}$ such that the values $g(v_{1}), g(v_{2}), \cdots, g(v_{m})$ are distinct, then $V$ is a collection of linearly independent vectors, 
\item[(c)] $g$ attains atmost $d$ distinct values in $\mathbb{R}$. 
\end{enumerate} 
\end{lemma}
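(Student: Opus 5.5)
The three parts follow one after another, each using only hypotheses 1--3 and the previous part.

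For (a), assume without loss of generality that $g(v_{1}) < g(v_{2})$. Both vectors are non-zero; if either were zero its value would be $-\infty$ and (a) would be trivial. Hypothesis 1 gives $g(v_{1} + v_{2}) \le g(v_{2})$. For the reverse inequality, write $v_{2} = (v_{1} + v_{2}) + (-v_{1})$. If $v_{1} + v_{2} = 0$, then $v_{2} = -v_{1}$, and hypothesis 2 with $c = -1$ gives $g(v_{2}) = g(v_{1})$, a contradiction. So $v_{1} + v_{2} \neq 0$, and hypotheses 1 and 2 give
\[ g(v_{2})\ \le\ \max \left\{ g(v_{1} + v_{2}),\; g(-v_{1}) \right\}\ =\ \max \left\{ g(v_{1} + v_{2}),\; g(v_{1}) \right\}. \]
Since $g(v_{2}) > g(v_{1})$, the maximum must be $g(v_{1} + v_{2})$, so $g(v_{1} + v_{2}) \ge g(v_{2})$. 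This gives equality.

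For (b), suppose $\sum_{i} c_{i} v_{i} = 0$ with not all $c_{i}$ zero, and let $I = \{ i : c_{i} \ne 0 \}$. By hypothesis 2, $g(c_{i} v_{i}) = g(v_{i})$ for $i \in I$, so the values $g(c_{i} v_{i})$, $i \in I$, are distinct and each differs from $-\infty$. Order the indices in $I$ so that these values strictly increase. Applying (a) inductively, adding one vector at a time in increasing order of $g$, shows that each partial sum is non-zero and has $g$-value equal to the largest value so far. Hence $g\left( \sum_{i \in I} c_{i} v_{i} \right) = \max_{i \in I} g(v_{i}) > -\infty$.

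The one delicate point is that (a) is stated for non-zero vectors, so the induction must confirm each partial sum is non-zero. This follows because the partial sum's $g$-value is not $-\infty$, and only the zero vector takes that value. This part is where I would be careful about the $-\infty$ case; hypothesis 3 should be read as saying exactly that $g(0) = -\infty$, and that $g$ is finite elsewhere, which holds in the intended application via Lemma \ref{lambdapmbound}. We then have $g(0) = -\infty$ on the left and a finite value on the right, a contradiction. So the $v_{i}$ are linearly independent.

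For (c), suppose $g$ took $d+1$ distinct real values. Pick one representative vector for each value; these are non-zero, since $g(0) = -\infty$ is not real. By (b) they form $d+1$ linearly independent vectors in $\mathbb{R}^{d}$, which is impossible. Hence $g$ attains at most $d$ distinct values in $\mathbb{R}$.
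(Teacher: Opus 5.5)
Your proof is correct and follows the paper's argument: (a) via $v_{2} = (v_{1}+v_{2}) + (-v_{1})$, (b) by contradiction applying (a) repeatedly to a vanishing linear combination, and (c) from (b). You are in fact more careful than the paper, which assumes $v_{1} \ne -v_{2}$ in (a) instead of deriving it, takes the maximum over all indices rather than only those with nonzero coefficients, and does not check that the partial sums are nonzero. Your extra assumption that $g$ is finite away from the origin is not needed: the proof of (a) already shows the partial sums are nonzero, the final value exceeds $-\infty$ once two coefficients are nonzero, and a single nonzero coefficient with $c_{i} v_{i} = 0$ would force $v_{i} = 0$.
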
 

\begin{proof} 
\begin{enumerate} 
\item[{\sl (c)}] This follows from the fact that there can exist only atmost $d$ many linearly independent vectors in $\mathbb{R}^{d}$ and the assertion as in statement {\sl (b)}, which we now prove. 

\item[{\sl (b)}] Let $V = \left\{ v_{1}, v_{2}, \cdots, v_{m} \right\}$ be a collection of linearly dependent vectors in $\mathbb{R}^{d}_{*}$ such that the values $g(v_{1}), g(v_{2}), \cdots, g(v_{m})$ are distinct. Then, there exists at least one non-zero constant among the real numbers $\alpha_{1}, \alpha_{2}, \cdots, \alpha_{m}$ satisfying $\alpha_{1} v_{1} + \alpha_{2} v_{2} + \cdots + \alpha_{m} v_{m} = 0$. However, owing to hypothesis $(2)$ and the assertion in {\sl (a)}, we get a contradiction, as explained below. 
\[ - \infty\ =\ g(0)\ =\ g \left( \alpha_{1} v_{1} + \alpha_{2} v_{2} + \cdots + \alpha_{m} v_{m} \right)\ =\ \max_{1\, \le\, j\, \le\, m} \left\{ g (v_{j}) \right\}. \] 
Thus, the collection of vectors in $V$ is necessarily linearly independent. We now prove {\sl (a)}. 

\item[{\sl (a)}] Let $v_{1}, v_{2} \in \mathbb{R}^{d}_{*}$ such that $v_{1} \ne - v_{2}$ and $g(v_{1}) \ne g(v_{2})$. Assume without loss of generality that $g(v_{1}) < g(v_{2})$. Then, our aim is to prove that $g(v_{1} + v_{2}) = g(v_{2})$. 

Consider $g(v_{2}) = g(v_{1} + v_{2} - v_{1}) \le \max \left\{ g(v_{1} + v_{2}), g(v_{1}) \right\}$, where we have used first two enumerated statements in the hypothesis of the Theorem. Suppose the maximum in the above inequality is achieved by the quantity $g(v_{1})$, then it leads to a contradiction to our assumption that $g(v_{1}) < g(v_{2})$. Thus, $g(v_{1} + v_{2}) = g(v_{2})$. 
\end{enumerate} 
\end{proof} 

\section{Proof of Theorem \ref{thm:oselomega}: Parts $A$ and $B$} 
\label{sec:potomegaab}

In this section, we consider the action of the map $S$ on $\Sigma_{N}^{+} \times X \times \mathbb{R}^{d}$ and prove parts $A$ and $B$ of Theorem \ref{thm:oselomega}. In other words, we consider some $\left( \mathfrak{B} \times \mu \right)$-typical point $\left( \omega,\, x \right) \in \Sigma_{N}^{+} \times X$ and complete the proof of the parts $A$ and $B$ of Theorem \ref{thm:oselomega}, one after another, in the ensuing subsections. 

\subsection{Proof of Part $A$} 

For some $\left( \mathfrak{B} \times \mu \right)$-typical point $\left( \omega,\, x \right) \in \Sigma_{N}^{+} \times X$, observe that $\lambda \left( \omega,\, x,\, \cdot \right)$, as defined in Equation \eqref{eqn:loxv} can be viewed as a function from $\mathbb{R}^{d}$ to $\mathbb{R} \cup \{ - \infty \}$ that satisfies the hypothesis of $g$ in Lemma \ref{fonrd}, thus rendering the assertions on $g$ to $\lambda \left( \omega,\, x,\, \cdot \right)$. We make use of this to prove part $A$ of Theorem \ref{thm:oselomega}. 

\begin{proof} \big(of part $A$ of Theorem \ref{thm:oselomega}\big) 
Consider a $\left( \mathfrak{B} \times \mu \right)$-typical point $\left( \omega,\, x \right) \in \Sigma_{N}^{+} \times X$ and define $U_{\left( \omega,\, x \right)} = \left\{ \lambda \left( \omega,\, x,\, v \right)\ :\ v \in \mathbb{R}^{d}_{*} \right\}$. Then, owing to Lemma \ref{fonrd}, the cardinality of the set $U_{\left( \omega,\, x \right)}$ is atmost $d$. We define $r \equiv r \left( \omega,\, x \right) = \# U_{\left( \omega,\, x \right)}$ and arrange the elements in the set $U_{\left( \omega,\, x \right)}$ in increasing order given by 
\begin{equation} 
\label{lambdaarranged} 
\lambda_{1} \left( \omega,\, x \right)\ <\ \lambda_{2} \left( \omega,\, x \right)\ <\ \cdots\ <\ \lambda_{r} \left( \omega,\, x \right). 
\end{equation} 
Now for every $1 \le k \le r$, define 
\[ V^{k}_{\left( \omega,\, x \right)}\ \ =\ \ \left\{ v \in \mathbb{R}^{d}_{*}\ :\ \lambda \left( \omega,\, x,\, v \right) \le \lambda_{k} \left( \omega,\, x \right) \right\} \cup \{ 0 \}. \] 
Then, owing to the properties of $\lambda \left( \omega,\, x,\, v \right)$, as stated in Equations \eqref{lamda(cv)} and \eqref{lamda(v1+v2)}, it is obvious that $V^{k}_{\left( \omega,\, x \right)}$ is a collection of nested vector subspaces of $\mathbb{R}^{d}$ for every $1 \le k \le r$, {\it i.e.}, 
\begin{equation} 
\label{nestedvs} 
V^{1}_{\left( \omega,\, x \right)}\ \subsetneq\ V^{2}_{\left( \omega,\, x \right)}\ \subsetneq\ \cdots\ \subsetneq\ V^{r}_{\left( \omega,\, x \right)}\ \equiv\ \mathbb{R}^{d}. 
\end{equation} 
Define $V^{0}_{\left( \omega,\, x \right)} = \left\{ 0 \right\}$. Then, our definition of the collection of vector subspaces $\left\{ V^{k}_{\left( \omega,\, x \right)} \right\}$ for $0 \le k \le r$ and our ordering of the numbers $\left\{ \lambda_{1} \left( \omega,\, x \right), \cdots, \lambda_{r} \left( \omega,\, x \right) \right\}$ ensures that 
\begin{equation} 
\label{eqn:lam} 
\lambda \left( \omega,\, x,\, v \right) = \lambda_{k} \left( \omega,\, x \right)\ \ \ \ \forall v \in V^{k}_{\left( \omega,\, x \right)} \setminus V^{k -1}_{\left( \omega,\, x \right)}, 
\end{equation} 
as otherwise, it would result in an increase in the cardinality of $U_{\left( \omega,\, x \right)}$. Also, one observes from Equation \eqref{lamdax=tomnx} and the definition of $U_{\left( \omega,\, x \right)}$ that $U_{\left( \omega,\, x \right)} \equiv U_{\left( \sigma \omega,\, T_{\pi_{1} \left( \omega \right)} x \right)}$ that results in 
\[ r \left( \omega,\, x \right)\ \equiv\ r \left( \sigma \omega,\, T_{\pi_{1} \left( \omega \right)} x \right)\ \ \ \text{and}\ \ \ \lambda_{k} \left( \omega,\, x \right)\ \equiv\ \lambda_{k} \left( \sigma \omega,\, T_{\pi_{1} \left( \omega \right)} x \right)\ \ \forall 1 \le k \le r. \] 
Finally, note that 
\begin{eqnarray*} 
L(x) V^{k}_{\left( \omega,\, x \right)} & = & \left\{ L(x) v \in \mathbb{R}^{d}_{*}\ :\ \lambda \left( \omega,\, x,\, v \right) \le \lambda_{k} \left( \omega,\, x \right)\ \text{for}\ v \in \mathbb{R}^{d}_{*} \right\} \cup \{ 0 \} \\ 
& = & \left\{ w \in \mathbb{R}^{d}_{*}\ :\ \lambda \left( \sigma \omega,\, T_{\pi_{1} \left( \omega \right)} x,\, w \right) \le \lambda_{k} \left( \sigma \omega,\, T_{\pi_{1} \left( \omega \right)} x \right) \right\} \cup \{ 0 \} \\ 
& = & V^{k}_{\left( \sigma \omega,\, T_{\pi_{1} \left( \omega \right)} x \right)}, 
\end{eqnarray*} 
making $L \left( T_{\pi_{j} \left( \omega \right)} x \right) V_{\left( \sigma^{j} \omega,\, T_{\pi_{j} \left( \omega \right)} x \right)}^{k} \equiv V_{\left( \sigma^{j + 1} \omega,\, T_{\pi_{j + 1} \left( \omega \right)} x \right)}^{k}$ for $j = 0$. 

Assuming the assertion for $j$, we now prove for $j + 1$. Thus, consider 
\begin{eqnarray*} 
& & L \left( T_{\pi_{j + 1} \left( \omega \right)} x \right) V_{\left( \sigma^{j + 1} \omega,\, T_{\pi_{j + 1} \left( \omega \right)} x \right)}^{k} \\ 
& = & \bigg\{ L \left( T_{\pi_{j + 1} \left( \omega \right)} x \right) v \in \mathbb{R}^{d}_{*}\ :\ \lambda \left( \sigma^{j + 1} \omega,\, T_{\pi_{j + 1} \left( \omega \right)} x,\, v \right) \le \lambda_{k} \left( \sigma^{j + 1} \omega,\, T_{\pi_{j + 1} \left( \omega \right)} x \right)\ \text{for}\ v \in \mathbb{R}^{d}_{*} \bigg\} \\ 
& & \hspace{+13.7cm} \cup \{ 0 \} \\ 
& = & \bigg\{ w' \in \mathbb{R}^{d}_{*}\ :\ \lambda \left( \sigma^{j + 1} \omega,\, T_{\pi_{j + 1} \left( \omega \right)} x,\, w' \right) \le \lambda_{k} \left( \sigma^{j + 1} \omega,\, T_{\pi_{j + 1} \left( \omega \right)} x \right) \bigg\} \cup \{ 0 \} \\ 
& = & V_{\left( \sigma^{j + 2} \omega,\, T_{\pi_{j + 2} \left( \omega \right)} x \right)}^{k}. 
\end{eqnarray*} 

Thus, one proves the assertions in part $A$ of Theorem \ref{thm:oselomega} completely, namely 
\begin{enumerate} 
\item $r \left( \omega,\, x \right) \equiv r \left( \sigma^{n} \omega,\, T_{\pi_{n} \left( \omega \right)} x \right)$ for all $n \in \mathbb{Z}_{+}$; 
\item $\lambda_{k} \left( \omega,\, x \right) \equiv \lambda_{k} \left( \sigma^{n} \omega,\, T_{\pi_{n} \left( \omega \right)} x \right)$ for $n \in \mathbb{Z}_{+}$ and 
\item $L \left( T_{\pi_{n} \left( \omega \right)} x \right) V_{\left( \sigma^{n} \omega,\, T_{\pi_{n} \left( \omega \right)} x \right)}^{k} \equiv V_{\left( \sigma^{n + 1} \omega,\, T_{\pi_{n + 1} \left( \omega \right)} x \right)}^{k}$, for every $n \in \mathbb{Z}_{+}$. 
\end{enumerate} 
\end{proof} 

\subsection{Proof of Part $B$} 

Let $(\mathscr{S},\, d)$ be a complete and separable metric space and let $2^{\mathscr{S}}_{{\rm cpt}}$ denote the set of all non-empty compact subsets of $\mathscr{S}$. Then, 
\begin{eqnarray*} 
d_{H} \left( C_{1},\, C_{2} \right) & = & \inf \Big\{ \epsilon > 0\ :\ \left(C_{1}\right)_{\epsilon} \subseteq C_{2}\ \text{and}\ \left(C_{2}\right)_{\epsilon} \subseteq C_{1} \\ 
& & \hspace{+2cm} \text{where}\ \left(C_{i}\right)_{\epsilon}\; =\; \left\{ s' \in \mathscr{S}\; :\; d \left( s, s' \right) \le \epsilon\ \text{whenever}\ s \in C_{i} \right\} \Big\} 
\end{eqnarray*} 
provides the Hausdorff metric on the set $2^{\mathscr{S}}_{{\rm cpt}}$, that induces a topology and thereby a Borel sigma-algebra denoted by $\mathcal{B} \left( 2^{\mathscr{S}}_{{\rm cpt}} \right)$ on $2^{\mathscr{S}}_{{\rm cpt}}$. Further, let ${\rm Gr}(d)$ denote the Grassmannian of $\mathbb{R}^{d}$, {\it i.e.}, the disjoint union of all Grassmannian manifolds ${\rm Gr} \left(l, d\right)$ for $1 \le l \le d$, where ${\rm Gr} \left(l, d\right)$ is the collection of all $l$-dimensional vector subspaces of $\mathbb{R}^{d}$. We define a metric on ${\rm Gr}(d)$ given by 
\[ d_{{\rm Gr}} \left( V_{1},\, V_{2} \right)\ \ =\ \ d_{H} \left( \mathbb{S}^{d - 1} \cap V_{1},\; \mathbb{S}^{d - 1} \cap V_{2} \right),\ \ \ \text{where}\ \ \ \mathbb{S}^{d - 1} \subsetneq \mathbb{R}^{d}\ \text{denotes the unit sphere}. \] 
One can note that $\left( {\rm Gr}(d),\, d_{{\rm Gr}} \right)$ is a complete metric space. 

Suppose $\left( \mathscr{P},\, \mathcal{B}_{\mathscr{P}} \right)$ is a complete probability measure space. We now define a set-valued map, say $H : \mathscr{P} \longrightarrow 2^{\mathscr{S}}_{{\rm cpt}}$ and make use of the selection theorem due to Kuratowski and Ryll-Nardzewski \cite{krn:1965}, as one may find in \cite{cm:1977, ao:2017} that conveys the map $H$ to points in $\mathscr{S}$ denoted by $h$, as follows. 

\begin{theorem}\cite{cm:1977, ao:2017} 
\label{equivalent} 
For a complete probability measure space $\left( \mathscr{P},\, \mathcal{B}_{\mathscr{P}} \right)$ and a complete and separable metric space $\left( \mathscr{S},\, d \right)$, the following statements are equivalent. 
\begin{enumerate} 
\item The map $H : \mathscr{P} \longrightarrow 2^{\mathscr{S}}_{{\rm cpt}}$ given by $H(p) = K_{p}$ is measurable. 
\item The graph of $H$ given by $\Gamma_{H} = \left\{ (p, q) \in \mathscr{P} \times \mathscr{S} : q \in K_{p} \right\}$ belongs to the product sigma-algebra $\mathcal{B}_{\mathscr{P}} \times \mathcal{B}_{\mathscr{S}}$ of the product space $\mathscr{P} \times \mathscr{S}$. 
\item The set $\left\{ p \in \mathscr{P} : K_{p} \cap U \ne \emptyset \right\}$ belongs to $\mathcal{B}_{\mathscr{P}}$ for any open set $U \subset \mathscr{S}$. 
\end{enumerate} 
Further, assuming one of these equivalent statements also conveys a selection map denoted by $h : \mathscr{P} \longrightarrow \mathscr{S}$ that satisfies $h(p) \in K_{p}$ for every $p \in \mathscr{P}$. 
\end{theorem}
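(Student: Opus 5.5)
The plan is to prove the cycle $(1) \Rightarrow (3) \Rightarrow (2) \Rightarrow (3) \Rightarrow (1)$, splitting $(3) \Rightarrow (1)$ off as its own step, and then to construct the selection $h$ from statement $(3)$ by a successive-approximation argument. Throughout, fix a countable dense set $\{ s_{i} \}_{i \ge 1} \subseteq \mathscr{S}$, which exists because $\mathscr{S}$ is separable. Write $B(s, t)$ for the open ball and $U^{t} = \{ s : d(s, U) < t \}$ for the open $t$-neighbourhood of a set $U$.

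\emph{$(1) \Rightarrow (3)$.} For open $U$, the family $\mathcal{H}_{U} = \{ K \in 2^{\mathscr{S}}_{{\rm cpt}} : K \cap U \ne \emptyset \}$ is open in the Hausdorff metric. Indeed, if $s \in K \cap U$ with $B(s, \epsilon) \subseteq U$, then every $K'$ with $d_{H}(K, K') < \epsilon$ meets $B(s, \epsilon)$. Hence $\{ p : K_{p} \cap U \ne \emptyset \} = H^{-1}(\mathcal{H}_{U})$ is measurable.

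\emph{$(3) \Rightarrow (1)$.} It suffices to show that $H^{-1}$ of every open Hausdorff ball is measurable. I would use the following steps.
\begin{enumerate}
\item Finite subsets of $\{ s_{i} \}$ are $d_{H}$-dense, so $2^{\mathscr{S}}_{{\rm cpt}}$ is separable. Every open set is then a countable union of balls centred at finite sets $F = \{ s_{i_{1}}, \cdots, s_{i_{m}} \}$ with rational radii.
\item Such a ball can be described as
\[ \{ K : d_{H}(K, F) < r \} = \bigcup_{q\, <\, r,\ q\, \in\, \mathbb{Q}} \Big( \{ K \subseteq F^{q} \} \cap \bigcap_{l\, =\, 1}^{m} \{ K \cap B(s_{i_{l}}, q) \ne \emptyset \} \Big), \]
using compactness of $K$ to pass from $<r$ to some rational $q<r$.
\item For a closed $C$ and compact $K$, one has $K \cap C \ne \emptyset$ if and only if $K \cap C^{1/n} \ne \emptyset$ for all $n$. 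Applying this with $C = \mathscr{S} \setminus F^{q}$ shows that $\{ p : K_{p} \subseteq F^{q} \}$ is the complement of a countable intersection of sets measurable by $(3)$.
\end{enumerate}
Together these give measurability of $H$.

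\emph{$(3) \Rightarrow (2)$.} For fixed $q \in \mathscr{S}$, the map $p \mapsto d(q, K_{p})$ is measurable, since $\{ p : d(q, K_{p}) < t \} = \{ p : K_{p} \cap B(q, t) \ne \emptyset \}$. For fixed $p$ it is $1$-Lipschitz in $q$. A Carath\'eodory function on $\mathscr{P} \times \mathscr{S}$ with $\mathscr{S}$ separable is jointly measurable: approximate $q$ by the nearest $s_{i}$ along a countable measurable partition. Since $K_{p}$ is closed, $\Gamma_{H} = \{ (p, q) : d(q, K_{p}) = 0 \}$ is in $\mathcal{B}_{\mathscr{P}} \times \mathcal{B}_{\mathscr{S}}$.

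\emph{$(2) \Rightarrow (3)$.} Here $\{ p : K_{p} \cap U \ne \emptyset \}$ is the projection onto $\mathscr{P}$ of $\Gamma_{H} \cap (\mathscr{P} \times U)$. I expect this to be the main obstacle, because projections of measurable sets need not be measurable in general. I would invoke the measurable projection theorem: for a complete probability space $\mathscr{P}$ and a Polish space $\mathscr{S}$, projections of product-measurable sets are analytic, hence universally measurable, hence in $\mathcal{B}_{\mathscr{P}}$ by completeness. This is exactly why completeness of the measure space is assumed.

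\emph{Selection.} Assuming $(3)$, I would build measurable $h_{n} : \mathscr{P} \to \{ s_{i} \}$ with $d(h_{n}(p), K_{p}) < 2^{-n}$ and $d(h_{n}(p), h_{n+1}(p)) < 2^{-n+1}$.
\begin{itemize}
\item Let $h_{0}(p)$ be the $s_{i}$ of least index with $K_{p} \cap B(s_{i}, 1) \ne \emptyset$.
\item Given $h_{n}$, let $h_{n+1}(p)$ be the $s_{i}$ of least index with $K_{p} \cap B(h_{n}(p), 2^{-n}) \cap B(s_{i}, 2^{-n-1}) \ne \emptyset$. Such an index exists because $\{ s_{i} \}$ is dense.
\end{itemize}
Each level set $\{ h_{n+1} = s_{i} \}$ is a finite Boolean combination of sets of the form in $(3)$, intersected with level sets of $h_{n}$, so it is measurable. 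The sequence $h_{n}(p)$ is Cauchy, so by completeness of $\mathscr{S}$ it converges to some $h(p)$. This limit $h$ is measurable, and $h(p) \in K_{p}$ because $d(h_{n}(p), K_{p}) \to 0$ and $K_{p}$ is closed.
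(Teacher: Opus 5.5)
Your proof is correct. The paper gives no argument of its own for this statement; it simply imports it from \cite{cm:1977, ao:2017} and Kuratowski--Ryll-Nardzewski \cite{krn:1965}. What you have written is essentially the standard proof in those sources: hit-sets generating the Borel structure of the Hausdorff metric, the Carath\'eodory function $(p, q) \mapsto d(q, K_{p})$ for graph measurability, the measurable projection theorem for $(2) \Rightarrow (3)$ (the paper itself records it as Lemma~\ref{lem:projmeas}, and it is where completeness of $\mathscr{P}$ is genuinely needed), and successive approximation for the selection.

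The only wording to tighten is in $(2) \Rightarrow (3)$. Since $\mathscr{P}$ carries no topology, the projection is $\mathcal{B}_{\mathscr{P}}$-Suslin rather than ``analytic''. It lies in $\mathcal{B}_{\mathscr{P}}$ because the $\sigma$-algebra of a complete probability space is closed under the Suslin operation.
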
 

We now write a theorem, as may be found in \cite{pw:1993} that serves as an interpretation of Theorem \ref{equivalent} for the map $H : \Sigma_{N}^{+} \times X \longrightarrow {\rm Gr}(d)$ for which the analogous set of equivalent statements are true. 

\begin{theorem} 
\label{equivalent:3}
Let $H : \Sigma_{N}^{+} \times X \longrightarrow {\rm Gr}(d)$. Then, the following statements are equivalent. 
\begin{enumerate} 
\item The map $H$ given by $H \left( \omega,\, x \right) = V_{(\omega,\, x)}$ is measurable. 
\item The set $\Gamma_{H} = \left\{ \left( \omega,\, x,\, v \right) \in \Sigma_{N}^{+} \times X \times \mathbb{R}^{d} : v \in H \left( \omega,\, x \right) \right\}$ belongs to the product sigma-algebra $\mathcal{B}_{\Sigma_{N}^{+}} \times \mathcal{B}_{X} \times \mathcal{B}_{\mathbb{R}^{d}}$ of the product space $\Sigma_{N}^{+} \times X \times \mathbb{R}^{d}$. 
\item For every $1 \le l \le d$, the set $X_{l} = \left\{ \left( \omega,\, x \right) \in \Sigma_{N}^{+} \times X : \dim \left( V_{(\omega,\, x)} \right) = l \right\}$ belongs to $\mathcal{B}_{\Sigma_{N}^{+}} \times \mathcal{B}_{X}$. Moreover, there exists measurable vector fields $w_{i} : X_{l} \longrightarrow \mathbb{R}^{d}$ for every $1 \le i \le l$ such that the collection $\left\{ w_{1} \left( \omega,\, x \right), w_{2} \left( \omega,\, x \right), \cdots, w_{l} \left( \omega,\, x \right) \right\}$ is a basis of the vector space $H \left( \omega,\, x \right)$ for every $(\omega,\, x) \in X_{l}$. 
\end{enumerate} 
\end{theorem}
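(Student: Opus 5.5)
The plan is to derive Theorem \ref{equivalent:3} from Theorem \ref{equivalent}. I take $\mathscr{P} = \Sigma_{N}^{+} \times X$ with the completion of $\mathcal{B}_{\Sigma_{N}^{+}} \times \mathcal{B}_{X}$ under $\mathfrak{B} \times \mu$, and $\mathscr{S} = \mathbb{S}^{d-1}$, which is compact, hence complete and separable. A subspace $V \in {\rm Gr}(d)$ is identified with the compact set $V \cap \mathbb{S}^{d-1}$. By the definition of $d_{{\rm Gr}}$, this identification is an isometric embedding of ${\rm Gr}(d)$ into $2^{\mathbb{S}^{d-1}}_{{\rm cpt}}$. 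Its image is closed, since Hausdorff limits of great spheres of a fixed dimension are great spheres of that dimension. Hence measurability of $H$ into ${\rm Gr}(d)$ is the same as measurability of $\widetilde{H}(\omega, x) = V_{(\omega,x)} \cap \mathbb{S}^{d-1}$ into $2^{\mathbb{S}^{d-1}}_{{\rm cpt}}$. I will prove the implications $(1)\Rightarrow(2)\Rightarrow(3)\Rightarrow(1)$.

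\emph{$(1)\Rightarrow(2)$.} Theorem \ref{equivalent} shows that the graph $\Gamma_{\widetilde{H}} \subseteq \mathscr{P} \times \mathbb{S}^{d-1}$ is product measurable. The set $\Gamma_{H}$ is the preimage of $\Gamma_{\widetilde{H}} \cup (\mathscr{P} \times \{0\})$ under the measurable map $(\omega, x, v) \mapsto (\omega, x, v/\|v\|)$ for $v \neq 0$, with $v = 0$ treated separately. It is therefore measurable.

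\emph{$(2)\Rightarrow(3)$.} Writing $e_{1}, \dots, e_{d}$ for the standard basis, I express the dimension through countably many measurable conditions. Fix a countable dense set $\{q_{m}\}$ of $\mathbb{S}^{d-1}$. Then $\dim V_{(\omega,x)} \ge l$ holds exactly when $V_{(\omega,x)}$ contains $l$ orthonormal vectors. To make this countable, I use that $\dim V \geq l$ iff for every $n$ there are $q_{m_{1}},\dots,q_{m_{l}}$ within $1/n$ of $V$ whose Gram matrix has determinant $\ge 1/2$. The function $(\omega,x) \mapsto {\rm dist}(q, \widetilde{H}(\omega,x))$ is measurable, because its sublevel sets are of the form in statement 3 of Theorem \ref{equivalent}, which holds by that theorem applied via $(2)$. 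This makes $X_{l}$ measurable.

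For the basis I use the Kuratowski--Ryll-Nardzewski selection in Theorem \ref{equivalent}, applied inductively on $X_{l}$. First I select $w_{1}(\omega,x) \in \widetilde{H}(\omega,x)$. Given $w_{1}, \dots, w_{i}$, I consider the compact set $K^{(i)}_{(\omega,x)} = \widetilde{H}(\omega,x) \cap \{w_{1},\dots,w_{i}\}^{\perp}$, which is nonempty for $i<l$. Its graph is the intersection of $\Gamma_{\widetilde{H}}$ with the measurable set $\{\langle q, w_{j}(\omega,x)\rangle = 0 \ \forall j \le i\}$, so it is measurable. Theorem \ref{equivalent} then yields a measurable $w_{i+1}$. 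This produces an orthonormal measurable basis.

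\emph{$(3)\Rightarrow(1)$.} On $X_{l}$, apply Gram--Schmidt to the $w_{i}$; this is measurable because it uses continuous operations on linearly independent vectors. For an open $U \subseteq \mathbb{S}^{d-1}$, the set $\{\widetilde{H} \cap U \neq \emptyset\}$ is the countable union over rational coefficient vectors $c \in \mathbb{Q}^{l}$ of the sets $\{\sum c_{i} w_{i} / \|\sum c_{i} w_{i}\| \in U\}$. This uses that the normalized rational combinations are dense in $\widetilde{H}(\omega,x)$. Each such set is measurable, and so statement 3 of Theorem \ref{equivalent} gives (1).

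The main obstacle is the measurability of the successive intersected sets $K^{(i)}$ in $(2)\Rightarrow(3)$. Intersections of measurable compact-valued maps are delicate in general. Here the problem is avoided by phrasing everything through graphs, using the graph criterion of Theorem \ref{equivalent}, and relying on completeness of the measure space.
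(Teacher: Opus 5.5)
Your argument is correct and follows the route the paper has in mind. The paper gives no proof of this theorem; it only asserts, with a citation, that the result is a reinterpretation of Theorem \ref{equivalent}. Your identification $V \mapsto V \cap \mathbb{S}^{d-1}$ (exactly how $d_{{\rm Gr}}$ is defined), the countable Gram-determinant test for $X_{l}$, and the iterated measurable selections on $X_{l}$ supply precisely the details the paper omits. The one point to state explicitly is that you equip $\Sigma_{N}^{+} \times X$ with the $(\mathfrak{B} \times \mu)$-completion, as Theorem \ref{equivalent} requires, so (2) and (3) are obtained with $\mathcal{B}_{\Sigma_{N}^{+}} \times \mathcal{B}_{X}$ replaced by its completion. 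This is also how the paper's own appeal to Theorem \ref{equivalent} has to be read.
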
 

We now prove that the map $(\omega,\, x) \longmapsto r (\omega,\, x)$ is measurable and so are the maps $(\omega,\, x) \longmapsto \lambda_{k} (\omega,\, x)$ and $(\omega,\, x) \longmapsto V_{(\omega,\, x)}^{k}$ for $1 \le k \le r$, by induction on $k$. 

In order to prove that the map $(\omega,\, x) \longmapsto r (\omega,\, x)$ is measurable, we need to show that $r^{-1} \left( \left[ \left. a,\, \infty \right) \right. \right)$ is measurable for every $a > - \infty$. Note that for $a \le 1$, we have $r^{-1} \left( \left[ \left. a,\, \infty \right) \right. \right) = \Sigma_{N}^{+} \times X$, a measurable set. Moreover, for $\left( \mathfrak{B} \times \mu \right)$-almost every $\left( \omega, x \right) \in \Sigma_{N}^{+} \times X$, we know from Equation \eqref{nestedvs} that $V_{(\omega,\, x)}^{r} = \mathbb{R}^{d}$. For $1 \le l < d$, observe that the set $X_{l} = \left\{ \left( \omega,\, x \right) \in \Sigma_{N}^{+} \times X : \dim \left( V^{r}_{(\omega,\, x)} \right) = l \right\}$ is empty. Hence, we only consider the set $X_{d} = \left\{ \left( \omega,\, x \right) \in \Sigma_{N}^{+} \times X : \dim \left( V^{r}_{(\omega,\, x)} \right) = d \right\}$. Suppose $\left\{ e_{1}, e_{2}, \cdots, e_{d} \right\}$ is an arbitrary collection of basis vectors of $\mathbb{R}^{d}$, we define for every point $(\omega,\, x) \in \Sigma_{N}^{+} \times X_{d}$, a collection of vector fields given by $w_{i} : \Sigma_{N}^{+} \times X_{d} \longrightarrow \mathbb{R}^{d}$ as $w_{i} \left( (\omega, x) \right) = e_{i}$. Thus making use of the equivalence of the three statements, as written in Theorem \ref{equivalent:3}, we obtain the map $(\omega, x) \longmapsto V_{(\omega, x)}^{r}$ to be measurable. 

Given $(\omega,\, x) \in \Sigma_{N}^{+} \times X$, rearrange the above collection of basis vectors of $\mathbb{R}^{d}$, if necessary, ordered also as $\left\{ e_{1}, e_{2}, \cdots, e_{d} \right\}$, by an abuse of notations, so that the vector space $V_{(\omega,\, x)}^{k}$ is spanned by $\left\{ e_{1}, e_{2}, \cdots, e_{k} \right\}$. Let $v \in V_{(\omega,\, x)}^{k} \setminus V_{(\omega,\, x)}^{k - 1}$. Then, by our arrangement of the numbers $\lambda_{k} (\omega,\, x)$, as in Equation \eqref{lambdaarranged}, we have $\lambda_{k - 1} (\omega,\, x) < \lambda (\omega,\, x,\, v) \le \lambda_{k} (\omega,\, x)$. Further, $\lambda (\omega,\, x,\, v) < \lambda_{k} (\omega,\, x)$ increases the cardinality of the set $U_{(\omega,\, x)}$. Hence, $\lambda (\omega,\, x,\, v) \equiv \lambda_{k} (\omega,\, x)$ for every $v \in V_{(\omega,\, x)}^{k} \setminus V_{(\omega,\, x)}^{k - 1}$. This implies 
\[ \max \big\{ \lambda \left( \omega,\, x,\, e_{i} \right)\ :\ 1 \le i \le k \big\}\ \ =\ \ \lambda_{k} (\omega,\, x),\ \forall \ 1 \le k \le r. \] 
Thus, the map $(\omega,\, x) \longmapsto \lambda^{k}(\omega,\, x)$ is measurable for every $1 \le k \le r$. 

\begin{lemma}\cite{cm:1977, mv:2014} 
\label{lem:projmeas}
Consider a complete probability measure space $\mathscr{P}$ and the complete separable metric space $\mathscr{S}$, as written in Theorem \ref{equivalent}. Let $\Pi : \mathscr{P} \times \mathscr{S} \longrightarrow \mathscr{P}$ be the projection map that is defined as $\Pi (p, s) = p$. Then, $\Pi (E) \in \mathcal{B}_{\mathscr{P}}$ whenever $E \in \mathcal{B}_{\mathscr{P}} \times \mathcal{B}_{\mathscr{S}}$. 
\end{lemma}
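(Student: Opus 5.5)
The statement is the classical measurable projection theorem. I would prove it in two stages. First, show that $\Pi(E)$ is obtained from sets in $\mathcal{B}_{\mathscr{P}}$ by the Suslin operation $\mathcal{A}$. Second, show that for a complete probability space the $\sigma$-algebra $\mathcal{B}_{\mathscr{P}}$ is closed under $\mathcal{A}$. Together these give $\Pi(E) \in \mathcal{B}_{\mathscr{P}}$. Completeness of $\mathscr{P}$ is used only in the second stage. Separability and completeness of $\mathscr{S}$ are used only in the first stage.

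\emph{Stage one.} Let $\mathcal{R}$ be the family of rectangles $A \times F$ with $A \in \mathcal{B}_{\mathscr{P}}$ and $F \subseteq \mathscr{S}$ closed.
\begin{enumerate}
\item Let $\mathcal{M}$ be the class of sets $E \subseteq \mathscr{P} \times \mathscr{S}$ such that both $E$ and its complement are of the form $\mathcal{A}(\mathcal{R})$. Since $\mathcal{A}$ is idempotent and contains countable unions and intersections, $\mathcal{M}$ is a $\sigma$-algebra.
\item Every open set of $\mathscr{S}$ is a countable union of closed balls, because $\mathscr{S}$ is separable. Hence every rectangle $A \times B$ with $A \in \mathcal{B}_{\mathscr{P}}$ and $B$ Borel lies in $\mathcal{M}$, and therefore $\mathcal{B}_{\mathscr{P}} \times \mathcal{B}_{\mathscr{S}} \subseteq \mathcal{M}$.
\item Refine the Suslin scheme. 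Write $E = \bigcup_{s \in \mathbb{N}^{\mathbb{N}}} \bigcap_{n} \left( A_{s|n} \times F_{s|n} \right)$. Because $\mathscr{S}$ is separable, each $F_{s|n}$ can be covered by countably many closed sets of diameter at most $2^{-n}$. Interleaving these covers into the indexing tree gives a scheme in which the sets $F_{s|n}$ are decreasing along each branch and $\operatorname{diam}(F_{s|n}) \to 0$.
\item Set $A'_{t} = A_{t}$ if $F_{t} \ne \emptyset$ and $A'_{t} = \emptyset$ otherwise. Along a branch the closed sets are nonempty, decreasing and have diameters tending to $0$, so by completeness of $\mathscr{S}$ their intersection is a single point. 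Consequently $\bigcap_n F_{s|n} \ne \emptyset$ exactly when every $F_{s|n} \ne \emptyset$. This yields
\[ \Pi(E) \;=\; \bigcup_{s} \bigcap_{n} A'_{s|n}, \]
a Suslin set over $\mathcal{B}_{\mathscr{P}}$.
\end{enumerate}
I expect this refinement of the scheme to be the main obstacle. It requires careful bookkeeping of the tree indexing while keeping each rectangle's first factor in $\mathcal{B}_{\mathscr{P}}$.

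\emph{Stage two.} Let $\mathfrak{m}$ be the measure on $\mathscr{P}$ and $\mathfrak{m}^{*}$ its outer measure. This is the Szpilrajn--Marczewski theorem, and I would follow its standard proof.
\begin{enumerate}
\item Replace the scheme by $\bigcap_{j \le n} A'_{s|j}$, so that it is decreasing along each branch. Let $P = \mathcal{A}(A')$.
\item For each finite word $t$, let $P_t$ be the union over branches $s$ extending $t$. Choose a measurable hull $H_t \supseteq P_t$ with $\mathfrak{m}(H_t) = \mathfrak{m}^{*}(P_t)$; we may take $H_t \subseteq A'_t$.
\item The sets $D_t = H_t \setminus \bigcup_k H_{tk}$ are measurable. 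Since $P_t = \bigcup_k P_{tk}$ and $P_{tk} \subseteq H_{tk}$, we get $\mathfrak{m}^{*}(P_t \cap D_t) = 0$, and hence $\mathfrak{m}(D_t) = 0$ by the hull property.
\item Put $D = \bigcup_t D_t$, a null set. By following a point down the tree, every point of $H_{\emptyset} \setminus D$ lies in $P$. Thus $P$ differs from the measurable set $H_{\emptyset}$ by a subset of a null set.
\item By completeness of $\mathfrak{m}$, $P$ is measurable.
\end{enumerate}
Combining the two stages gives $\Pi(E) \in \mathcal{B}_{\mathscr{P}}$.
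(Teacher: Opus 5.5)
Your proof is correct. The paper gives no argument for this lemma; it is simply quoted from Castaing--Valadier and Viana. What you wrote is the classical proof behind that citation, in three moves:
\begin{enumerate}
\item A set in $\mathcal{B}_{\mathscr{P}} \times \mathcal{B}_{\mathscr{S}}$ is obtained by the Suslin operation from rectangles $A \times F$ with $F$ closed.
\item The Cantor intersection argument turns this into a Suslin representation of $\Pi(E)$ over $\mathcal{B}_{\mathscr{P}}$.
\item A $\sigma$-algebra that is complete for a finite measure is stable under the Suslin operation.
\end{enumerate}

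So this is not a different route: you are supplying the self-contained proof the paper omits. It has the merit of showing where each hypothesis enters:
\begin{itemize}
\item Completeness of $\mathscr{S}$ is used only in the Cantor intersection step. It cannot be dropped: take a non-Lebesgue-measurable $Z \subseteq [0,1]$ with its induced metric; the diagonal $\{(z,z) : z \in Z\}$ lies in the product $\sigma$-algebra but projects onto $Z$.
\item Separability is used only in the small-diameter refinement.
\item Completeness and finiteness of the measure on $\mathscr{P}$ are used only in the measurable-hull argument of Stage two.
\end{itemize}

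Two small points of presentation in Stage one:
\begin{itemize}
\item Step 1 needs only that $\mathcal{A}(\mathcal{R})$ is closed under countable unions and intersections; idempotence is not used.
\item Step 2 should state explicitly that $\{B : \mathscr{P} \times B \in \mathcal{M}\}$ is a $\sigma$-algebra containing the closed sets. Then conclude with $A \times B = (A \times \mathscr{S}) \cap (\mathscr{P} \times B)$, noting $A \times \mathscr{S} \in \mathcal{M}$ because its complement is in $\mathcal{R}$.
\end{itemize}
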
 

Observe that the set $\left\{ (\omega,\, x,\, v) \in \Sigma_{N}^{+} \times X \times \mathbb{R}^{d}\ :\ \lambda (\omega,\, x,\, v) < \lambda_{k} (\omega,\, x) \right\}$ is a measurable subset of $\Sigma_{N}^{+} \times X \times \mathbb{R}^{d}$. Using Lemma \ref{lem:projmeas}, we then obtain that the set 
\[ \left\{ (\omega,\, x) \in \Sigma_{N}^{+} \times X\ :\ \lambda (\omega,\, x,\, v) < \lambda_{r} (\omega,\, x) \right\}\ \ =\ \ \left\{ (\omega,\, x) \in \Sigma_{N}^{+} \times X\ :\ r (\omega,\, x) \ge 2 \right\} \] 
is a measurable subset of $\Sigma_{N}^{+} \times X$. Further, 
\begin{eqnarray*} 
& & \left\{ (\omega,\, x,\, v) \in \Sigma_{N}^{+} \times X \times \mathbb{R}^{d}\ :\ v \in V_{(\omega,\, x)}^{r - 1} \right\} \\ 
& = & \left\{ (\omega,\, x,\, v) \in \Sigma_{N}^{+} \times X \times \mathbb{R}^{d}\ :\ \lambda (\omega,\, x,\, v) \le \lambda_{r - 1} (\omega,\, x) \right\} \cup \left( \Sigma_{N}^{+} \times X \times \{ 0 \} \right) 
\end{eqnarray*} 
is a measurable subset of $\Sigma_{N}^{+} \times X \times \mathbb{R}^{d}$. Then, making use of the three equivalent statements in Theorem \ref{equivalent}, we obtain that the map $(\omega,\, x) \longmapsto V_{(\omega,\, x)}^{r - 1}$ is measurable. 

Proceeding by induction on $r$ where $r$ decreases on the set of positive integers, the proof of part $B$ of Theorem \ref{thm:oselomega} is complete. 

\section{Analogue of Kingman ergodic theorem} 
\label{kingmananalogue} 

Before we embark on the proof of part $C$ of Theorem \ref{thm:oselomega}, we write a few necessary lemmas and technical constructions in this section. 

Suppose $\mathcal{C} (\mathscr{K})$ is the Banach space of real-valued continuous functions defined on a compact metric space $\mathscr{K}$ that comes equipped with the supremum norm denoted by $\| \cdot \|_{\infty}$. Let $\mathcal{M} \left( \Sigma_{N}^{+} \times X \times \mathscr{K} \right)$ denote the space of all real-valued measurable functions such that for $\left( \mathfrak{B} \times \mu \right)$-almost every $\left( \omega, x \right) \in \Sigma_{N}^{+} \times X$ we have $\phi \left( \omega, x, \cdot \right) \in \mathcal{C} (\mathscr{K})$ and the map $(\omega,\, x) \longmapsto \left\| \phi \left( \omega, x, \cdot \right) \right\|_{\infty}$ is integrable with respect to the product measure $\left( \mathfrak{B} \times \mu \right)$. Define a norm $\| \cdot \|_{\mathcal{M}}$ on $\mathcal{M} \left( \Sigma_{N}^{+} \times X \times \mathscr{K} \right)$ given by $\displaystyle{ \left\| \phi \right\|_{\mathcal{M}} = \int \left\| \phi \left( \omega, x, \cdot \right) \right\|_{\infty} \mathrm{d} \left( \mathfrak{B} \times \mu \right)}$. 

In order to prove that $\mathcal{M} \left( \Sigma_{N}^{+} \times X \times \mathscr{K} \right)$ is a complete space with respect to $\left\| \cdot \right\|_{\mathcal{M}}$, we consider a Cauchy sequence of functions in $\mathcal{M} \left( \Sigma_{N}^{+} \times X \times \mathscr{K} \right)$, namely $\left\{ \phi_{n} \right\}_{n\, \in\, \mathbb{Z}_{+}}$, that contains a subsequence, say $\left\{ \phi_{n_{k}} \right\}_{k\, \in\, \mathbb{Z}_{+}}$ which satisfies $\displaystyle{\left\| \phi_{n_{k}} - \phi_{n_{k + 1}} \right\|_{\mathcal{M}} < \frac{1}{2^{k}}}$ for every $k \in \mathbb{Z}_{+}$. Then, 
\begin{eqnarray*} 
1\ \ =\ \ \sum_{k\, \ge\, 1} \frac{1}{2^{k}} & > & \sum_{k\, \ge\, 1} \left\| \phi_{n_{k}} - \phi_{n_{k + 1}} \right\|_{\mathcal{M}} \\ 
& = & \sum_{k\, \ge\, 1} \int \left\| \phi_{n_{k}} \left( \omega, x, \cdot \right) - \phi_{n_{k + 1}} \left( \omega, x, \cdot \right) \right\|_{\infty} \mathrm{d} \left( \mathfrak{B} \times \mu \right). 
\end{eqnarray*} 
Then, an application of a corollary of the Lebesgue dominated convergence theorem, as one may find in \cite{rudin:1987} (Theorem 1.38), says that $\displaystyle{\sum_{k\, \ge\, 1} \left\| \phi_{n_{k}} \left( \omega, x, \cdot \right) - \phi_{n_{k + 1}} \left( \omega, x, \cdot \right) \right\|_{\infty}}$ converges for $\left( \mathfrak{B} \times \mu \right)$-almost every $(\omega, x)$. Hence, the sequence $\left\{ \phi_{n_{k}} \left( \omega, x, \cdot \right) \right\}_{k\, \in\, \mathbb{Z}_{+}}$ is Cauchy in $\mathcal{C} (\mathscr{K})$, and hence converges to some limit, say $\phi_{0} \left( \omega, x, \cdot \right)$ in $\mathcal{C} (\mathscr{K})$. Note that $\phi_{0} \in \mathcal{M} \left( \Sigma_{N}^{+} \times X \times \mathscr{K} \right)$, provided the map $(\omega,\, x) \longmapsto \left\| \phi_{0} \left( \omega, x, \cdot \right) \right\|_{\infty}$ is integrable with respect to the product measure $\left( \mathfrak{B} \times \mu \right)$. We now prove the same. 

Consider the sequence $\left\{ \phi_{n_{k}} \left( \omega, x, y \right) \right\}_{k\, \in \mathbb{Z}_{+}}$ that converges to $\phi_{0} \left( \omega, x, y \right)$ uniformly in $\mathcal{C} (\mathscr{K})$ for $\left( \mathfrak{B} \times \mu \right)$-almost every $\left( \omega, x \right)$. Then, $\displaystyle{\lim_{k\, \to\, \infty} \left\| \phi_{n_{k}} \left( \omega, x, \cdot \right) \right\|_{\infty} = \left\| \phi_{0} \left( \omega, x, \cdot \right) \right\|_{\infty}}$. Then, by Fatou's lemma, as one may find in \cite{rudin:1987}, we have 
\[ \int \left\| \phi_{0} \left( \omega, x, \cdot \right) \right\|_{\infty} \mathrm{d} \left( \mathfrak{B} \times \mu \right)\ \ \le\ \ \liminf_{k\, \to\, \infty} \int \left\| \phi_{n_{k}} \left( \omega, x, \cdot \right) \right\|_{\infty} \mathrm{d} \left( \mathfrak{B} \times \mu \right)\ \ \le\ \ \liminf_{k\, \to\, \infty} \left\| \phi_{n_{k}} \right\|_{\mathcal{M}}\ \ <\ \ \infty, \]
and thus the map $(\omega,\, x) \longmapsto \left\| \phi_{0} \left( \omega, x, \cdot \right) \right\|_{\infty}$ is integrable with respect to the product measure $\left( \mathfrak{B} \times \mu \right)$. 

We still need to prove that the sequence $\displaystyle{\left\{ \phi_{n_{k}} \right\}}$ converges to $\phi_{0}$ in the $\mathcal{M}$-norm to conclude that $\mathcal{M} \left( \Sigma_{N}^{+} \times X \times \mathscr{K} \right)$ is complete with respect to the norm $\left\| \cdot \right\|_{\mathcal{M}}$. Towards that end, we note that 
\[ \left\| \phi_{n_{k}} \left( \omega, x, \cdot \right) - \phi_{n_{k + 1}} \left( \omega, x, \cdot \right) \right\|_{\infty}\ \ \le\ \ \sum_{l\, =\, n_{k}}^{n_{k + 1} - 1} \left\| \phi_{l + 1} \left( \omega, x, \cdot \right) - \phi_{l} \left( \omega, x, \cdot \right) \right\|_{\infty}. \] 
Fix $n_{k}$ and let $n_{k + 1} \to \infty$. Then, since $\phi_{n_{k}} \to \phi_{0}$ in the supremum norm, 
\begin{eqnarray*} 
\left\| \phi_{n_{k}} \left( \omega, x, \cdot \right) - \phi_{0} \left( \omega, x, \cdot \right) \right\|_{\infty} & \le & \sum_{l\, =\, n_{k}}^{\infty} \left\| \phi_{l + 1} \left( \omega, x, \cdot \right) - \phi_{l} \left( \omega, x, \cdot \right) \right\|_{\infty} \\ 
& \le & \sum_{l\, =\, n_{1}}^{\infty} \left\| \phi_{l + 1} \left( \omega, x, \cdot \right) - \phi_{l} \left( \omega, x, \cdot \right) \right\|_{\infty}. \nonumber 
\end{eqnarray*} 

Since, by assumption, $\displaystyle{\left\| \phi_{n_{k}} - \phi_{n_{k + 1}} \right\|_{\mathcal{M}} < \frac{1}{2^{k}}}$ for every $k \in \mathbb{Z}_{+}$, we have 
\[ \sum_{l\, =\, n_{1}}^{\infty} \left\| \phi_{l + 1} - \phi_{l} \right\|_{\mathcal{M}}\ \ =\ \ \sum_{l\, =\, n_{1}}^{\infty} \int \left\| \phi_{l + 1} - \phi_{l} \right\|_{\infty} \mathrm{d} \left( \mathfrak{B} \times \mu \right)\ \ <\ \ \infty. \] 
Thus, an application of Theorem 1.38 from \cite{rudin:1987} yields 
\[ \int \sum_{l\, =\, n_{1}}^{\infty} \left\| \phi_{l + 1} - \phi_{l} \right\|_{\infty} \mathrm{d} \left( \mathfrak{B} \times \mu \right)\ \ =\ \ \sum_{l\, =\, n_{1}}^{\infty} \int \left\| \phi_{l + 1} - \phi_{l} \right\|_{\infty} \mathrm{d} \left( \mathfrak{B} \times \mu \right)\ \ <\ \ \infty. \] 
Since 
\[ \int \sum_{l\, =\, n_{k}}^{\infty} \left\| \phi_{l + 1} - \phi_{l} \right\|_{\infty} \mathrm{d} \left( \mathfrak{B} \times \mu \right) \le \int \sum_{l\, =\, n_{1}}^{\infty} \left\| \phi_{l + 1} - \phi_{l} \right\|_{\infty} \mathrm{d} \left( \mathfrak{B} \times \mu \right), \] 
an application of the dominated convergence theorem, as one may find in \cite{rudin:1987} gives 
\begin{eqnarray*} 
\lim_{k\, \to\, \infty} \left\| \phi_{n_{k}} - \phi_{0} \right\|_{\mathcal{M}} & = & \lim_{k\, \to\, \infty} \int \left\| \phi_{n_{k}} - \phi_{0} \right\|_{\infty} \mathrm{d} \left( \mathfrak{B} \times \mu \right) \\ 
& \le & \lim_{k\, \to\, \infty} \int \sum_{l\, =\, n_{k}}^{\infty} \left\| \phi_{l + 1} - \phi_{l} \right\|_{\infty} \mathrm{d} \left( \mathfrak{B} \times \mu \right) \\ 
& = & \int \lim_{k\, \to\, \infty} \sum_{l\, =\, n_{k}}^{\infty} \left\| \phi_{l + 1} - \phi_{l} \right\|_{\infty} \mathrm{d} \left( \mathfrak{B} \times \mu \right) \\ 
& = & \int \lim_{k\, \to\, \infty} \left( \sum_{l\, =\, n_{1}}^{\infty} \left\| \phi_{l + 1} - \phi_{l} \right\|_{\infty} - \sum_{l\, =\, n_{1}}^{n_{k} - 1} \left\| \phi_{l + 1} - \phi_{l} \right\|_{\infty} \right) \mathrm{d} \left( \mathfrak{B} \times \mu \right) \\ 
& = & 0, 
\end{eqnarray*} 
thereby proving that $\mathcal{M} \left( \Sigma_{N}^{+} \times X \times \mathscr{K} \right)$ is complete with respect to the norm $\left\| \cdot \right\|_{\mathcal{M}}$. 

Let $\mathbb{M} \left( \mathfrak{B} \times \mu \right)$ be the set of all probability measures $\nu$ supported on $\Sigma_{N}^{+} \times X \times \mathscr{K}$ such that ${\rm Proj}_{*} \nu = \nu \circ {\rm Proj}^{-1} = \mathfrak{B} \times \mu$, where ${\rm Proj} : \Sigma_{N}^{+} \times X \times \mathscr{K} \longrightarrow \Sigma_{N}^{+} \times X$ is the projection map. Then, by an application of the Banach-Alaoglu theorem we get $\mathbb{M} \left( \mathfrak{B} \times \mu \right)$ as a compact metrisable subspace of the space of all probability measures supported on $\Sigma_{N}^{+} \times X \times \mathscr{K}$. 

We now state and prove a proposition that helps us in the sequel. 

\begin{proposition} 
\label{nustar}
Consider the dynamical system $\mathcal{G} : \Sigma_{N}^{+} \times X \times \mathscr{K} \circlearrowright$ given by $\mathcal{G} \left( \omega,\, x,\, v \right) = \left( \sigma \omega, T_{\pi_{1} \left( \omega \right)} x, G_{(\omega,\, x)} v \right)$, where $G_{(\omega,\, x)} : \mathscr{K} \longrightarrow \mathscr{K}$ is a continuous map for every $\left( \mathfrak{B} \times \mu \right)$-typical point $\left( \omega,\, x \right) \in \Sigma_{N}^{+} \times X$. Fix some $\left( \mathfrak{B} \times \mu \right)$-typical point $\left( \omega,\, x \right) \in \Sigma_{N}^{+} \times X$. Then, given any $\phi \in \mathcal{M} \left( \Sigma_{N}^{+} \times X \times \mathscr{K} \right)$, the real sequences 
\[ \left\{ \frac{1}{n} \inf_{v\, \in\, \mathscr{K}} \sum_{j\, =\, 0}^{n - 1} \phi \left( \mathcal{G}^{j} \left( \omega,\, x,\, v \right) \right) \right\}_{n\, \in\, \mathbb{Z}_{+}}\ \ \ \text{and}\ \ \ \left\{ \frac{1}{n} \sup_{v\, \in\, \mathscr{K}} \sum_{j\, =\, 0}^{n - 1} \phi \left( \mathcal{G}^{j} \left( \omega,\, x,\, v \right) \right) \right\}_{n\, \in\, \mathbb{Z}_{+}} \] 
converge to the limits $\phi_{*} \equiv \left( \phi_{*} \right)_{\left( \omega,\, x \right)}$ and $\phi^{*} \equiv \left( \phi^{*} \right)_{\left( \omega,\, x \right)}$. Further, there exist $\mathcal{G}$-invariant measures $\nu_{*}$ and $\nu^{*}$ such that 
\begin{eqnarray} 
\label{nuustar} 
\int \phi \mathrm{d} \nu_{*} & = & \int \phi_{*} \mathrm{d}\left( \mathfrak{B} \times \mu \right)\ \ =\ \ \int \lim_{n\, \to\, \infty} \frac{1}{n} \inf_{v\, \in\, \mathscr{K}} \sum_{j\, =\, 0}^{n - 1} \phi \left( \mathcal{G}^{j} \left( \omega,\, x,\, v \right) \right) \mathrm{d}\left( \mathfrak{B} \times \mu \right) \\ 
\label{nulstar} 
\int \phi \mathrm{d} \nu^{*} & = & \int \phi^{*} \mathrm{d}\left( \mathfrak{B} \times \mu \right)\ \ =\ \ \int \lim_{n\, \to\, \infty} \frac{1}{n} \sup_{v\, \in\, \mathscr{K}} \sum_{j\, =\, 0}^{n - 1} \phi \left( \mathcal{G}^{j} \left( \omega,\, x,\, v \right) \right) \mathrm{d}\left( \mathfrak{B} \times \mu \right). 
\end{eqnarray} 
\end{proposition}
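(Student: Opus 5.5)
The plan follows the classical argument for this statement for a single map. There it is Lemma 4.? of Viana's book, which in turn comes from Furstenberg--Kesten and Ledrappier. The essential observation is that the skew product $\left( \omega, x \right) \longmapsto \left( \sigma \omega, T_{\pi_{1}(\omega)} x \right)$ on $\Sigma_{N}^{+} \times X$ preserves $\mathfrak{B} \times \mu$. Indeed, for a product set $C \times E$ with $C$ a cylinder, splitting by the first letter and using $\mu \circ T_{i}^{-1} = \mu$ together with the Bernoulli property gives invariance. Hence $\mathcal{G}$ is a single measurable skew product over a measure-preserving base, and the classical machinery applies to it directly.

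Set $a_{n}(\omega,x) = \sup_{v \in \mathscr{K}} \sum_{j=0}^{n-1} \phi\left( \mathcal{G}^{j}(\omega,x,v) \right)$. Because $\mathcal{G}^{m}(\omega,x,v) = \mathcal{G}^{m}_{\cdot}$ with fibre $\left( \sigma^{m}\omega, T_{\pi_{m}(\omega)}x \right)$, taking the supremum over $v$ in each block separately gives
\[ a_{m+n}(\omega,x)\ \le\ a_{m}(\omega,x) + a_{n}\left( \sigma^{m}\omega, T_{\pi_{m}(\omega)} x \right). \]
So $\{a_{n}\}$ is subadditive over the base skew product. Moreover $|a_{1}| \le \|\phi(\omega,x,\cdot)\|_{\infty}$, which is integrable by the definition of $\mathcal{M}$, and $a_{n}$ is bounded below by $-\sum_{j} \|\phi\|_{\infty}$ along the orbit. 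Measurability of $a_{n}$ follows because $\phi(\omega,x,\cdot)$ is continuous on the compact metric space $\mathscr{K}$, so the supremum may be taken over a countable dense set. The Kingman subadditive ergodic theorem for the measure-preserving map $(\omega,x) \mapsto (\sigma\omega, T_{\pi_{1}(\omega)}x)$ then gives a.e. convergence of $a_{n}/n$ to an invariant $\phi^{*}$ with $\int \phi^{*} = \inf_{n} \frac{1}{n} \int a_{n}$. This is the same argument underlying Theorem \ref{thm:ketomega}, now integrated over $\omega$ too. Applying the same reasoning to $-\phi$ gives $\phi_{*} = -(-\phi)^{*}$.

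Next I construct $\nu^{*}$. For each $n$ and each typical $(\omega,x)$, choose by compactness and continuity a maximiser $v_{n}(\omega,x)$ of $v \mapsto \sum_{j<n} \phi(\mathcal{G}^{j}(\omega,x,v))$. Theorem \ref{equivalent}, the Kuratowski--Ryll-Nardzewski selection, applied to the closed argmax set makes this choice measurable. Define
\[ \nu_{n} = \frac{1}{n} \sum_{j=0}^{n-1} \mathcal{G}^{j}_{*} \eta_{n}, \qquad \eta_{n} = \int \delta_{(\omega,x,v_{n}(\omega,x))}\, \mathrm{d}(\mathfrak{B}\times\mu). \]
Since the base measure is invariant, each $\nu_{n} \in \mathbb{M}(\mathfrak{B}\times\mu)$, and $\int \phi\, \mathrm{d}\nu_{n} = \frac{1}{n} \int a_{n}\, \mathrm{d}(\mathfrak{B}\times\mu)$.

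The main obstacle is the limiting step. The space $\mathbb{M}(\mathfrak{B}\times\mu)$ should be given the weak topology dual to $\mathcal{M}$, meaning $\nu_{k} \to \nu$ iff $\int \psi\, \mathrm{d}\nu_{k} \to \int \psi\, \mathrm{d}\nu$ for all $\psi \in \mathcal{M}$; this is where the completeness of $\mathcal{M}$ established above and the Banach--Alaoglu compactness are used. Extract a subsequence $\nu_{n_{k}} \to \nu^{*}$. Then $\int \phi\, \mathrm{d}\nu^{*} = \lim \frac{1}{n}\int a_{n} = \int \phi^{*}$.

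It remains to check $\mathcal{G}$-invariance. For $\psi \in \mathcal{M}$ the function $\psi \circ \mathcal{G}$ lies again in $\mathcal{M}$, because $G_{(\omega,x)}$ is continuous and the base map preserves the measure. This lets us pass to the limit in the telescoping identity $\int \psi \circ \mathcal{G}\, \mathrm{d}\nu_{n} - \int \psi\, \mathrm{d}\nu_{n} = \frac{1}{n}\left(\int \psi \circ \mathcal{G}^{n}\, \mathrm{d}\eta_{n} - \int \psi\, \mathrm{d}\eta_{n}\right)$, whose absolute value is at most $\frac{2}{n}\|\psi\|_{\mathcal{M}} \to 0$. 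The infimum case giving $\nu_{*}$ is symmetric, using minimisers instead of maximisers.
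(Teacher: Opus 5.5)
Your proposal is correct and follows essentially the same route as the paper. Both arguments use subadditivity of the sup/inf Birkhoff sums and a Kingman-type theorem over the base $(\omega,x)\mapsto(\sigma\omega,T_{\pi_{1}(\omega)}x)$; you invoke the classical theorem after checking that this skew product preserves $\mathfrak{B}\times\mu$, while the paper cites Theorem \ref{thm:kingmanomega}. The remaining steps also coincide: a measurable selection of maximisers, Ces\`aro averages of push-forwards of the Dirac-kernel measures, a weak$^{*}$ limit in $\mathbb{M}(\mathfrak{B}\times\mu)$, and the $\frac{2}{n}\|\psi\|_{\mathcal{M}}$ telescoping bound for $\mathcal{G}$-invariance. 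You also make explicit several points the paper leaves implicit: why each $\nu_{n}$ has marginal $\mathfrak{B}\times\mu$, why $\psi\circ\mathcal{G}\in\mathcal{M}$ so that the limit can be taken, and why $\phi^{*}$ is finite.
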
 

\begin{proof} 
Observe that the definition of $\mathcal{G}$ entails that its $n$-th iterate is given by 
\begin{eqnarray*} 
\mathcal{G}^{n} \left( \omega,\, x,\, v \right) & = & \left( \sigma^{n} \omega,\, T_{\pi_{n} \left( \omega \right)} x,\, \left( G_{\left( \sigma^{n - 1} \omega,\, T_{\pi_{n - 1} \left( \omega \right)} x \right)} \circ \cdots \circ G_{\left( \omega,\, x \right)} \right) v \right) \\ 
& = & \left( \sigma^{n} \omega,\, T_{\pi_{n} \left( \omega \right)} x,\, G_{\left( \omega,\, x \right)}^{n} v \right), 
\end{eqnarray*} 
where we define $G_{\left( \omega,\, x \right)}^{n} = G_{\left( \sigma^{n - 1} \omega,\, T_{\pi_{n - 1} \left( \omega \right)} x \right)} \circ \cdots \circ G_{\left( \omega,\, x \right)}$. Now, let 
\begin{equation} 
\label{eqn:phinstar} 
\left( \phi_{n} \right)_{*} \left( \omega,\, x \right)\ \ =\ \ \inf_{v\, \in\, \mathscr{K}} \sum_{j\, =\, 0}^{n - 1} \phi \left( \mathcal{G}^{j} \left( \omega,\, x,\, v \right) \right)\ \ \ \text{and}\ \ \ \left( \phi_{n} \right)^{*} \left( \omega,\, x \right)\ \ =\ \ \sup_{v\, \in\, \mathscr{K}} \sum_{j\, =\, 0}^{n - 1} \phi \left( \mathcal{G}^{j} \left( \omega,\, x,\, v \right) \right). 
\end{equation} 
We work with the sequence of functions $\left\{ \left( \phi_{n} \right)^{*} \right\}_{n\, \in\, \mathbb{Z}_{+}}$ and the arguments presented carry over to the sequence of functions $\left\{ \left( \phi_{n} \right)_{*} \right\}_{n\, \in\, \mathbb{Z}_{+}}$, as well. Note that $\left( \phi_{n} \right)^{*} \left( \omega,\, x \right)$ can be alternatively defined as 
\[ \left( \phi_{n} \right)^{*} \left( \omega,\, x \right)\ \ =\ \ \sup_{v\, \in\, \Delta} \sum_{j\, =\, 0}^{n - 1} \phi \left( \mathcal{G}^{j} \left( \omega,\, x,\, v \right) \right), \] 
where $\Delta$ is a countable dense subset of $\mathscr{K}$. Further, $\left\{ \left( \phi_{n} \right)^{*} \right\}_{n\, \in\, \mathbb{Z}_{+}}$ is a sequence of measurable functions defined on $\Sigma_{N}^{+} \times X$, that satisfies 
\[ \left( \phi_{n + p} \right)^{*} \left( \omega,\, x \right) \le \left( \phi_{n} \right)^{*} \left( \omega,\, x \right) + \left( \phi_{p} \right)^{*} \left( \sigma^{n} \omega,\, T_{\pi_{n} \left( \omega \right)} x \right). \] 

We now state a theorem due to the authors, as can be found in \cite{ts:pp} and make use of the same to prove the convergence of the sequences, mentioned in Proposition \ref{nustar}. Note that this theorem is the analogue of the Kingman ergodic theorem, in this setting.

\begin{theorem}\cite{ts:pp} 
\label{thm:kingmanomega}
Let $\left\{ T_{1}, T_{2}, \cdots, T_{N} \right\},\ 1 \le N < \infty$ be finitely many measure preserving maps defined on a compact probability measure space $(X, \mu)$. Let $\phi \in \mathcal{M} \left( \Sigma_{N}^{+} \times X \times \mathscr{K} \right)$. Then, the sequence $\displaystyle{\left\{\dfrac{\left( \phi_{n} \right)^{*}}{n}\right\}_{n\, \ge\, 1}}$ converges for $\left( \mathfrak{B} \times \mu \right)$-almost every $(\omega, x) \in \Sigma_{N}^{+} \times X$ to some measurable function, say $\phi^{*} : \Sigma_{N}^{+} \times X \longrightarrow [-\infty, \infty)$ that satisfies $\phi^{*} \left( \omega,\, x \right) = \phi^{*} \left( \sigma^{j} \omega, T_{\pi_{j} \left( \omega \right)} x \right)$ for any $j \in \mathbb{Z}_{+}$. Moreover, the positive part of the limit function $\phi^{*}$ is integrable and 
\[ \int \phi^{*} \mathrm{d}\left( \mathfrak{B} \times \mu \right)\ \ =\ \ \lim_{n\, \to\, \infty} \frac{1}{n} \int \left( \phi_{n} \right)^{*} \mathrm{d}\left( \mathfrak{B} \times \mu \right)\ \ =\ \ \inf_{n\, \ge\, 1} \frac{1}{n} \int \left( \phi_{n} \right)^{*} \mathrm{d}\left( \mathfrak{B} \times \mu \right)\ \ \in\ \ [-\infty, \infty). \] 
\end{theorem} 

Applying Theorem \ref{thm:kingmanomega} to $\left( \phi_{n} \right)^{*}$ as defined in Equation \eqref{eqn:phinstar}, we obtain the limit function $\phi^{*}$ for $\left( \mathfrak{B} \times \mu \right)$-almost every $(\omega, x) \in \Sigma_{N}^{+} \times X$. Analogously, we also obtain the limit function $\phi_{*}$, working with $\left( \phi_{n} \right)_{*}$ for $\left( \mathfrak{B} \times \mu \right)$-almost every $(\omega, x) \in \Sigma_{N}^{+} \times X$, thus proving the existence of the limits of the appropriate real sequences, as required in Proposition \ref{nustar}. 

In order to construct the $\mathcal{G}$-invariant measures $\nu_{*}$ and $\nu^{*}$ satisfying the Equations \eqref{nulstar} and \eqref{nuustar}, we consider the measurable set 
\begin{equation} 
\label{Deltan} 
\Delta_{n}\ =\ \left\{ \left( \omega,\, x,\, v \right) \in \Sigma_{N}^{+} \times X \times \mathscr{K} : \sum_{j\, =\, 0}^{n - 1} \phi \left( \mathcal{G}^{j} \left( \omega,\, x,\, v \right) \right) = \left( \phi_{n} \right)^{*} \left( \omega,\, x \right) \right\}. 
\end{equation} 
For some fixed $\left( \omega,\, x \right) \in \Sigma_{N}^{+} \times X$, we define the set $\Delta_{n} \left( \omega,\, x \right) = \left\{ v \in \mathscr{K} : \left( \omega,\, x,\, v \right) \in \Delta_{n} \right\}$. Since $\mathscr{K}$ is compact, we have that $\Delta_{n} \left( \omega,\, x \right)$ to be a nonempty compact subset of $\mathscr{K}$, thus making the map $\left( \omega,\, x \right) \longmapsto \Delta_{n} \left( \omega,\, x \right)$ as a measurable map from $\Sigma_{N}^{+} \times X \longrightarrow 2^{\mathscr{K}}_{{\rm cpt}}$. Thus, by Theorem \ref{equivalent}, there exists a measurable selection, say $w_{n} : \Sigma_{N}^{+} \times X \longrightarrow \mathscr{K}$ such that $w_{n} \left( \omega,\, x \right) \in \Delta_{n} \left( \omega,\, x \right) \in 2^{\mathscr{K}}_{{\rm cpt}}$. 

\section{Markov kernels} 
\label{markov} 

We begin this section by recalling the definition of a Markov kernel and a result pertaining to the same, as can be found in \cite{ka:2014}; expressing it in the case of our setting. 

\begin{definition} 
Consider the measurable spaces $\left( \Sigma_{N}^{+},\, \mathcal{B}_{\Sigma_{N}^{+}} \right),\ \left( X,\, \mathcal{B}_{X} \right)$ and $\left( \mathscr{K},\, \mathcal{B}_{\mathscr{K}} \right)$. A map $\kappa : \Sigma_{N}^{+} \times X \times \mathcal{B}_{\mathscr{K}} \longrightarrow [0, 1]$ is called a \emph{Markov kernel} if 
\begin{enumerate} 
\item the function $\left( \omega,\, x \right) \longmapsto \kappa \left( \omega,\, x,\, B_{\mathscr{K}} \right)$ is measurable for every $B_{\mathscr{K}} \in \mathcal{B}_{\mathscr{K}}$; 
\item the function $B_{\mathscr{K}} \longmapsto \kappa \left( \omega,\, x,\, B_{\mathscr{K}} \right)$ is a probability measure supported on $\mathscr{K}$ for any $\left( \omega,\, x \right)$. 
\end{enumerate} 
\end{definition} 

\begin{theorem}\cite{ka:2014} 
\label{markovthm}
Let $\kappa : \Sigma_{N}^{+} \times X \times \mathcal{B}_{\mathscr{K}} \longrightarrow [0, 1]$ be a Markov kernel. Then, there exists a unique measure given by $\gamma = \mathfrak{B} \times \mu \times \kappa$ supported on $\Sigma_{N}^{+} \times X \times \mathscr{K}$ such that 
\[ \gamma (B) = \left( \mathfrak{B} \times \mu \times \kappa \right) \left( B_{\Sigma_{N}^{+}} \times B_{X} \times B_{\mathscr{K}} \right)\ \ =\ \ \int\limits_{B_{\Sigma_{N}^{+}} \times B_{X}} \kappa \left( \omega,\, x,\, B_{\mathscr{K}} \right) \mathrm{d}\left( \mathfrak{B} \times \mu \right), \] 
for every measurable set $B = B_{\Sigma_{N}^{+}} \times B_{X} \times B_{\mathscr{K}}$ where $B_{\Sigma_{N}^{+}} \in \mathcal{B}_{\Sigma_{N}^{+}},\; B_{X} \in \mathcal{B}_{X}$ and $B_{\mathscr{K}} \in \mathcal{B}_{\mathscr{K}}$. 
\end{theorem} 

Define a sequence of Markov kernels $\left\{ \kappa_{n} \right\}$ as follows: For any set $B_{\mathscr{K}} \in \mathcal{B}_{\mathscr{K}}$, let 
\[ \kappa_{n} \left( \omega,\, x,\, B_{\mathscr{K}} \right)\ \ =\ \ \begin{cases} 1 & \text{if}\ v_{n} \left( \omega,\, x \right) \in B_{\mathscr{K}}; \\ 0 & \text{otherwise}. \end{cases} \] 
Then, by Theorem \ref{markovthm}, we obtain a sequence of measures, namely $\left\{ \gamma_{n} \right\}$ such that 
\[ \gamma_{n} (B)\ \ =\ \ \int\limits_{B_{\Sigma_{N}^{+}} \times B_{X}} \kappa_{n} \left( \omega,\, x,\, B_{\mathscr{K}} \right) \mathrm{d}\left( \mathfrak{B} \times \mu \right),\ \ \text{where}\ B = B_{\Sigma_{N}^{+}} \times B_{X} \times B_{\mathscr{K}}. \] 
Note that $\gamma_{n} \in \mathbb{M} \left( \mathfrak{B} \times \mu \right)$ since ${\rm Proj}_{*} \gamma_{n} = \mathfrak{B} \times \mu$. Also, the sequence of measures $\left\{ \nu_{n} \right\}$ supported on $\Sigma_{N}^{+} \times X \times \mathscr{K}$ and defined as \[ \nu_{n} (B)\ =\ \frac{1}{n} \sum_{j\, =\, 0}^{n - 1} \left( \mathcal{G}^{j}_{*} \gamma_{n} \right) (B)\ =\ \frac{1}{n} \sum_{j\, =\, 0}^{n - 1} \gamma_{n} \left( \mathcal{G}^{-j} (B) \right), \] 
satisfies ${\rm Proj}_{*} \nu_{n} = \mathfrak{B} \times \mu$ and thus, $\nu_{n} \in \mathbb{M} \left( \mathfrak{B} \times \mu \right)$. 

Since $\mathbb{M} \left( \mathfrak{B} \times \mu \right)$ is a compact space, there exists a subsequence $\left\{ \nu_{n_{k}} \right\}$ in $\mathbb{M} \left( \mathfrak{B} \times \mu \right)$ that converges to some measure in $\mathbb{M} \left( \mathfrak{B} \times \mu \right)$. We prove that this limiting measure is equal to $\nu^{*}$, as required in Proposition \ref{nustar}. We first prove that $\nu^{*}$ is $\mathcal{G}$-invariant, by taking an arbitrary function $\phi \in \mathcal{M} \left( \Sigma_{N}^{+} \times X \times \mathscr{K} \right)$. Then, 
\begin{eqnarray*} 
& & \left| \int \left( \phi \circ \mathcal{G} \right) \mathrm{d}\nu_{n_{k}} - \int \phi \mathrm{d}\nu_{n_{k}} \right| \\ 
& = & \frac{1}{n_{k}} \left| \int \left( \phi \circ \mathcal{G}^{n_{k}} \right) \mathrm{d}\gamma_{n_{k}} - \int \phi \mathrm{d}\gamma_{n_{k}} \right| \\ 
& = & \frac{1}{n_{k}} \Bigg| \int \phi \left( \sigma^{n_{k}} \omega,\, T_{\pi_{n_{k}} \left( \omega \right)} x,\, G^{n_{k}}_{\left( \omega,\, x \right)} v \right) \mathrm{d}\gamma_{n_{k}} - \int \phi \left( \omega,\,x,\, v \right) \mathrm{d}\gamma_{n_{k}} \Bigg| \\ 
& = & \frac{1}{n_{k}} \Bigg| \int \phi \left( \sigma^{n_{k}} \omega,\, T_{\pi_{n_{k}} \left( \omega \right)} x,\, w_{n_{k}} \left( \sigma_{n_{k}} \omega,\, T_{\pi_{n_{k}} \left( \omega \right)} x \right) \right) \mathrm{d}\left( \mathfrak{B} \times \mu \right) \\ 
& & \hspace{5cm} - \int \phi \left( \omega,\,x,\, w_{n_{k}} \left( \omega,\, x \right) \right) \mathrm{d}\left( \mathfrak{B} \times \mu \right) \Bigg| \\ 
& \le & \frac{1}{n_{k}} \int \sup_{v\, \in\, \mathscr{K}} \left| \phi \left( \sigma^{n_{k}} \omega,\, T_{\pi_{n_{k}} \left( \omega \right)} x,\, v \right) \right| \mathrm{d}\left( \mathfrak{B} \times \mu \right) + \frac{1}{n_{k}} \int \sup_{v\, \in\, \mathscr{K}} \left| \phi \left( \omega,\, x,\, v \right) \right| \mathrm{d}\left( \mathfrak{B} \times \mu \right) \\ 
& \le & \frac{2}{n_{k}} \left\| \phi \right\|_{\mathcal{M}}. 
\end{eqnarray*} 
As $k \to \infty$, we obtain the limiting measure $\nu_{*}$ to be $\mathcal{G}$-invariant. Further, 
\begin{eqnarray*} 
\int \phi \mathrm{d}\nu^{*}\ \ =\ \ \lim_{k\, \to\, \infty} \int \phi \mathrm{d}\nu_{n_{k}} & = & \lim_{k\, \to\, \infty} \int \phi \mathrm{d}\left( \frac{1}{n_{k}} \sum_{j\, =\, 0}^{n_{k} - 1} \mathcal{G}^{j}_{*} \gamma_{n_{k}} \right) \\ 
& = & \lim_{k\, \to\, \infty} \frac{1}{n_{k}} \int \sum_{j\, =\, 0}^{n_{k} - 1} \left( \phi \circ \mathcal{G}^{j} \right) \mathrm{d}\left( \mathfrak{B} \times \mu \right)\ \ =\ \ \int \phi^{*} \mathrm{d}\left( \mathfrak{B} \times \mu \right), 
\end{eqnarray*} 
thus proving Equation \eqref{nulstar}. Similarly, one can prove the existence of a limiting $\mathcal{G}$-invariant measure $\nu_{*}$ and the corresponding Equation \eqref{nuustar}, by considering the measurable set $\Theta_{n}$, as defined below, in place of $\Delta_{n}$, as defined in Equation \eqref{Deltan}. 
\[ \Theta_{n}\ =\ \left\{ \left( \omega,\, x,\, v \right) \in \Sigma_{N}^{+} \times X \times \mathscr{K} : \sum_{j\, =\, 0}^{n - 1} \phi \left( \mathcal{G}^{j} \left( \omega,\, x,\, v \right) \right) = \left( \phi_{n} \right)_{*} \left( \omega,\, x \right) \right\}, \] 
thereby, completing the proof of Proposition \ref{nustar}. 
\end{proof}

\begin{corollary} 
\label{cor:nustar}
Assume the hypothesis in Proposition \ref{nustar}. Then, for $\left( \mathfrak{B} \times \mu \right)$-almost every $\left( \omega,\, x \right) \in \Sigma_{N}^{+} \times X$, there exist functions $w_{*},\ w^{*} : \Sigma_{N}^{+} \times X \longrightarrow \mathscr{K}$ such that 
\begin{eqnarray*} 
\phi_{*} \left( \omega,\, x \right) & = & \lim_{n\, \to\, \infty} \frac{1}{n} \sum_{j\, =\, 0}^{n - 1} \left( \phi \circ \mathcal{G}^{j} \right) \left( \omega,\, x,\, w_{*} \left( \omega,\, x \right) \right); \\ 
\phi^{*} \left( \omega,\, x \right) & = & \lim_{n\, \to\, \infty} \frac{1}{n} \sum_{j\, =\, 0}^{n - 1} \left( \phi \circ \mathcal{G}^{j} \right) \left( \omega,\, x,\, w^{*} \left( \omega,\, x \right) \right), 
\end{eqnarray*} 
where $\phi_{*}$ and $\phi^{*}$ are functions as defined in Equations \eqref{nulstar} and \eqref{nuustar}, respectively. 
\end{corollary}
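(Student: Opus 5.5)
We would prove the statement for $\phi^{*}$; the case of $\phi_{*}$ is symmetric, with infima in place of suprema. The plan is to realise the value $\phi^{*}(\omega,x)$ along a single orbit by combining the invariant measure $\nu^{*}$ of Proposition \ref{nustar} with the Birkhoff-type theorem of Theorem \ref{thm:ketomega}, lifted to the skew product $\mathcal{G}$.

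First, for every $v \in \mathscr{K}$ and every $n$ we have $\sum_{j=0}^{n-1}\phi(\mathcal{G}^{j}(\omega,x,v)) \le (\phi_{n})^{*}(\omega,x)$. Hence for any choice of $w(\omega,x)$,
\[ \limsup_{n\to\infty}\frac1n\sum_{j=0}^{n-1}\phi\big(\mathcal{G}^{j}(\omega,x,w(\omega,x))\big)\ \le\ \phi^{*}(\omega,x) \]
almost everywhere. This gives the easy half.

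For the reverse inequality we apply the Birkhoff ergodic theorem (Theorem \ref{thm:bet}) to the $\mathcal{G}$-invariant probability measure $\nu^{*}$ and the observable $\phi$. This is legitimate because $|\phi(\omega,x,v)|\le\|\phi(\omega,x,\cdot)\|_{\infty}$ and ${\rm Proj}_{*}\nu^{*}=\mathfrak{B}\times\mu$, so $\phi\in\mathscr{L}^{1}(\nu^{*})$. We obtain, for $\nu^{*}$-almost every $(\omega,x,v)$, a limit $\widetilde{\phi}(\omega,x,v)$ of the Birkhoff averages with $\int\widetilde{\phi}\,\mathrm{d}\nu^{*}=\int\phi\,\mathrm{d}\nu^{*}=\int\phi^{*}\,\mathrm{d}(\mathfrak{B}\times\mu)$. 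By the first step, $\widetilde{\phi}(\omega,x,v)\le\phi^{*}(\omega,x)$ holds $\nu^{*}$-almost everywhere. Integrating against $\nu^{*}$ and using that $\phi^{*}$ depends only on $(\omega,x)$, the two integrals agree. Therefore $\widetilde{\phi}(\omega,x,v)=\phi^{*}(\omega,x)$ for $\nu^{*}$-almost every triple. Here we must handle the possibility $\phi^{*}=-\infty$: Theorem \ref{thm:kingmanomega} gives only $(\phi^{*})^{+}$ integrable, so we would either truncate $\phi$ from below or first restrict to the set where $\phi^{*}>-\infty$. On the complementary set any $w$ works, by the upper bound.

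Next, let $Z$ be the full $\nu^{*}$-measure set of triples on which the averages converge to $\phi^{*}(\omega,x)$. It is measurable, being defined through limits of measurable functions. Disintegrating $\nu^{*}$ over $\mathfrak{B}\times\mu$, or equivalently using ${\rm Proj}_{*}\nu^{*}=\mathfrak{B}\times\mu$ together with $\nu^{*}(Z^{c})=0$, the section $Z(\omega,x)=\{v:(\omega,x,v)\in Z\}$ is nonempty for $(\mathfrak{B}\times\mu)$-almost every $(\omega,x)$. Indeed, if it were empty on a set $A$ of positive measure, then $\nu^{*}({\rm Proj}^{-1}A)=(\mathfrak{B}\times\mu)(A)>0$ would be contained in $Z^{c}$. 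By Lemma \ref{lem:projmeas} the projection $\Pi(Z)$ is measurable, and a measurable selection $w^{*}(\omega,x)\in Z(\omega,x)$ exists by the measurable section theorem, in the spirit of Theorem \ref{equivalent}. Since the corollary only asks for functions, not measurable ones, the axiom of choice would in fact suffice.

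The main obstacle is the equality step: showing that $\widetilde{\phi}=\phi^{*}$ holds $\nu^{*}$-almost everywhere, rather than only on average. The comparison of integrals needs finiteness of $\int\phi^{*}$, so we would treat the $-\infty$ case separately, by truncating $\phi$ to $\max\{\phi,-M\}$ and letting $M\to\infty$.
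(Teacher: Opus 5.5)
Your proposal is correct and follows the paper's route. Like the paper, you apply Birkhoff's theorem to $\mathcal{G}$ with the invariant measure from Proposition \ref{nustar}, use the integral identity to get $\widetilde{\phi} = \phi^{*}\circ{\rm Proj}$ almost everywhere, and then pick a point in the fibre; you also make explicit the pointwise bound $\widetilde{\phi} \le \phi^{*}\circ{\rm Proj}$, which the paper's proof uses without stating it.

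The $-\infty$ case needs no separate treatment, and truncating $\phi$ would be a bad fix because it changes $\nu^{*}$. Since $\phi \in \mathscr{L}^{1}(\nu^{*})$, the identity $\int \phi^{*}\, \mathrm{d}(\mathfrak{B}\times\mu) = \int \phi\, \mathrm{d}\nu^{*}$ already forces $\phi^{*}$ to be finite almost everywhere.
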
 

\begin{proof} 
For the dynamical system on $\Sigma_{N}^{+} \times X$ given by $\left( \omega,\, x \right) \longmapsto \left( \sigma \omega,\, T_{\pi_{1} \left( \omega \right)} x \right)$ that preserves the measure $\mathfrak{B} \times \mu$ and an observable function $\psi \in \mathscr{L}^{1} \left( \mathfrak{B} \times \mu \right)$, one can obtain from the Birkhoff's ergodic theorem, as stated in Theorem \ref{thm:bet} that 
\[ \widetilde{\psi} \left( \omega,\, x \right)\ \ =\ \ \lim_{n\, \to\, \infty} \frac{1}{n} \sum_{j\, =\, 0}^{n - 1} \psi \left( \sigma^{j} \omega,\, T_{\pi_{j} \left( \omega \right)} x \right),\ \ \text{that satisfies}\ \ \int \widetilde{\psi} \mathrm{d}\left( \mathfrak{B} \times \mu \right) = \int \psi \mathrm{d}\left( \mathfrak{B} \times \mu \right). \] 
And for the dynamical system $\mathcal{G}$ defined on $\Sigma_{N}^{+} \times X \times \mathscr{K}$ that preserves the measure $\nu_{*}$, as obtained in Proposition \ref{nustar} and an integrable function $\phi \in \mathscr{L}^{1} \left( \nu_{*} \right)$, we have 
\[ \widetilde{\phi} \left( \omega,\, x,\, v \right)\ \ =\ \ \lim_{n\, \to\, \infty} \frac{1}{n} \sum_{j\, =\, 0}^{n - 1} \left( \phi \circ \mathcal{G}^{j} \right) \left( \omega,\, x,\, v \right),\ \ \text{that satisfies}\ \ \int \widetilde{\phi} \mathrm{d}\nu_{*} = \int \phi \mathrm{d}\nu_{*}. \] 
Owing to Proposition \ref{nustar}, we have $\displaystyle{\int \phi \mathrm{d}\nu_{*} = \int \phi_{*} \mathrm{d}\left( \mathfrak{B} \times \mu \right)}$. Thus, the measurable set $E = \left\{ \left( \omega,\, x,\, v \right) \in \Sigma_{N}^{+} \times X \times \mathscr{K} : \widetilde{\phi} \left( \omega,\, x,\, v \right) = \phi_{*} \left( \omega,\, x \right) \right\}$ is a set for which $\nu_{*} (E) = 1$. Thus, for $\left( \mathfrak{B} \times \mu \right)$-almost every $\left( \omega,\, x \right) \in \Sigma_{N}^{+} \times X$, there exists at least one point $v \in \mathscr{K}$ for which 
\[ \phi_{*} \left( \omega,\, x \right)\ =\ \lim_{n\, \to\, \infty} \frac{1}{n} \sum_{j\, =\, 0}^{n - 1} \left( \phi \circ \mathcal{G}^{j} \right) \left( \omega,\, x,\, v \right). \] 
We call that $v$ as $w_{*} \left( \omega,\, x \right)$. Similarly, considering the $\mathcal{G}$-invariant measure $\nu^{*}$, one arrives at the function $w^{*} \left( \omega,\, x \right)$. 
\end{proof} 

\section{Invariant measurable sub-bundle} 
\label{invariantmsb} 

In this section, we focus on stating and proving a proposition that enables us in the determination of the operator norm of $L_{\omega}^{n} (x)$, for $\left( \mathfrak{B} \times \mu \right)$-almost every $\left( \omega,\, x \right) \in \Sigma_{N}^{+} \times X$, which in turn helps us in the proof of part $C$ of Theorem \ref{thm:oselomega}. To do that, we start with the following definition. 

\begin{definition} 
Consider the map $H : \Sigma_{N}^{+} \times X \longrightarrow \mathbb{R}^{d}$, as advocated in Theorem \ref{equivalent:3}, that determines a vector subspace $V_{\left( \omega,\, x \right)} \subseteq \mathbb{R}^{d}$. $H$ is said to be a \emph{$\sigma$-invariant measurable sub-bundle} if $L(x) V_{\left( \omega,\, x \right)} = V_{\left( \sigma \omega,\, T_{\pi_{1} \left(\omega \right)} x \right)}$, where the map $L$ is as in Theorem \ref{thm:oselomega}.
\end{definition} 

\begin{proposition} 
\label{prop:usein10}
Consider the dynamical system $S : \Sigma_{N}^{+} \times X \times \mathbb{R}^{d} \circlearrowright$, as given in Theorem \ref{thm:oselomega}. Consider a $\sigma$-invariant measurable sub-bundle $H : \Sigma_{N}^{+} \times X \longrightarrow \mathbb{R}^{d}$. Then, for $\left( \mathfrak{B} \times \mu \right)$-almost every $\left( \omega,\, x \right) \in \Sigma_{N}^{+} \times X$, we have 
\begin{eqnarray*} 
\lim_{n\, \to\, \infty} \frac{1}{n} \log \left\| \left. L_{\omega}^{n} (x) \right|_{V_{\left( \omega,\, x \right)}} \right\|_{{\rm op}} & = & \max \left\{ \lambda \left( \omega,\, x,\, v \right)\ :\ v \in V_{\left( \omega,\, x \right)} \setminus \left\{ 0 \right\} \right\} \\ 
\lim_{n\, \to\, \infty} \frac{1}{n} \log \left\| \left. \left( L_{\omega}^{n} (x) \right)^{-1} \right|_{V_{\left( \omega,\, x \right)}} \right\|^{-1}_{{\rm op}} & = & \min \left\{ \lambda \left( \omega,\, x,\, v \right)\ :\ v \in V_{\left( \omega,\, x \right)} \setminus \left\{ 0 \right\} \right\}. 
\end{eqnarray*} 
\end{proposition}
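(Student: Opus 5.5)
We follow the classical route (as in \cite{mv:2014}): apply Proposition \ref{nustar} and Corollary \ref{cor:nustar} to the projectivised cocycle restricted to the sub-bundle. Only the first identity needs to be proved; the second is symmetric, replacing suprema by infima (i.e. $\phi^{*}$ by $\phi_{*}$).

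\emph{Setup.} Take $\mathscr{K} = \mathbb{P}^{d-1}$, the compact space of lines in $\mathbb{R}^{d}$. For $[v] \in \mathscr{K}$ let
\[ G_{(\omega,\, x)} [v] = [L(x) v], \qquad \phi \left( \omega,\, x,\, [v] \right) = \log \frac{\| L(x) v \|}{\| v \|}. \]
Then $\phi$ is continuous in $[v]$, and $\| \phi (\omega,\, x,\, \cdot) \|_{\infty} \le \max \left\{ \log^{+} \| L(x) \|_{{\rm op}},\ \log^{+} \| L(x)^{-1} \|_{{\rm op}} \right\}$. This bound is integrable, since $\mathfrak{B} \times \mu$ is a product and the bound depends only on $x$. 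Hence $\phi \in \mathcal{M}$. The Birkhoff sums telescope:
\[ \sum_{j=0}^{n-1} \phi \left( \mathcal{G}^{j} (\omega,\, x,\, [v]) \right) = \log \frac{\| L^{n}_{\omega}(x) v \|}{\| v \|}. \]

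\emph{Restriction to the sub-bundle.} To handle $V_{(\omega,\, x)}$, restrict to the closed subset $\mathscr{K}_{(\omega,\, x)} = \mathbb{P}(V_{(\omega,\, x)})$. By $\sigma$-invariance, $G_{(\omega,\, x)}$ maps $\mathscr{K}_{(\omega,\, x)}$ onto $\mathscr{K}_{(\sigma\omega,\, T_{\pi_{1}(\omega)} x)}$. Proposition \ref{nustar} is stated with a fixed fibre $\mathscr{K}$, so I would redo it on the measurable compact-valued family $(\omega,\, x) \mapsto \mathscr{K}_{(\omega,\, x)}$. Its measurability comes from part B together with Theorem \ref{equivalent:3}. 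Measurable selections of maximisers still exist by Theorem \ref{equivalent}, and the Kingman argument (Theorem \ref{thm:kingmanomega}) only uses subadditivity of
\[ (\phi_{n})^{*}(\omega,\, x) = \sup_{[v] \in \mathscr{K}_{(\omega,\, x)}} \log \| L^{n}_{\omega}(x) v \| / \| v \| = \log \left\| L^{n}_{\omega}(x) |_{V_{(\omega,\, x)}} \right\|_{{\rm op}}, \]
which holds because $L^{n}_{\omega}(x) V_{(\omega,\, x)} = V_{\sigma^{n}(\omega,\, x)}$. The alternative of conjugating $V_{(\omega,\, x)}$ to a fixed $\mathbb{R}^{l}$ using the measurable bases of Theorem \ref{equivalent:3}(3) is messier, because the conjugated cocycle need not be log-integrable.

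\emph{Key steps.} (i) Kingman gives that $\frac{1}{n} \log \| L^{n}_{\omega}(x)|_{V} \|_{{\rm op}}$ converges a.e. to $\phi^{*}(\omega,\, x)$, so the limit exists. (ii) Upper bound: for each $v \in V \setminus \{0\}$, $\frac{1}{n} \log \| L^{n}_{\omega}(x) v \| \le \frac{1}{n} \log \| L^{n}_{\omega}(x)|_{V} \|_{{\rm op}} + \frac{1}{n} \log \| v \|$. Taking $\limsup$ gives $\lambda(\omega,\, x,\, v) \le \phi^{*}$. (iii) Lower bound, which is the main obstacle: we need some $v \in V$ with $\lambda(\omega,\, x,\, v) \ge \phi^{*}(\omega,\, x)$. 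Corollary \ref{cor:nustar} (in its fibrewise version) supplies $w^{*}(\omega,\, x) \in \mathscr{K}_{(\omega,\, x)}$ along which the Birkhoff averages of $\phi$ converge to $\phi^{*}(\omega,\, x)$, i.e. $\lambda(\omega,\, x,\, w^{*}) = \phi^{*}$. Care is needed here: the corollary's argument produces such a point only for $\nu^{*}$-typical fibres. So I would verify that $\int \phi \, \mathrm{d}\nu^{*} = \int \phi^{*}$ together with $\widetilde{\phi} \le \phi^{*}$ (which follows from step (ii)) forces $\widetilde{\phi} = \phi^{*}$ $\nu^{*}$-a.e. Since $\nu^{*}$ projects to $\mathfrak{B} \times \mu$, almost every fibre then carries such a point.

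Combining (ii) and (iii) gives the first identity; the maximum is attained at $w^{*}$, and it takes only finitely many values by Lemma \ref{fonrd}. For the second identity, use $(\phi_{n})_{*}$ and $\nu_{*}$. Here $\inf_{[v]} \log \| L^{n}_{\omega}(x) v \| / \| v \| = \log \left\| \left( L^{n}_{\omega}(x)|_{V} \right)^{-1} \right\|_{{\rm op}}^{-1}$. The inequality reverses to give $\lambda(\omega,\, x,\, v) \ge \phi_{*}$ via $\liminf$, but one needs the $\limsup$. This is fine: $\lambda \ge \liminf \ge \phi_{*}$ for every $v$, and $w_{*}$ attains $\phi_{*}$ as a genuine limit.
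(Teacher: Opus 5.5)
Your proposal is correct. Its core is the paper's: projectivise, obtain $\lim \frac{1}{n}\log$ of the restricted norm and conorm from a subadditive ergodic theorem, bound every $\lambda(\omega,x,v)$ between the two limits, and use Corollary~\ref{cor:nustar} to produce vectors $w^{*},w_{*}$ that realise them.

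\textbf{Where the routes differ.} The difference is how you restrict to the sub-bundle.
\begin{itemize}
\item The paper applies Gram--Schmidt to the measurable frames of Theorem~\ref{equivalent:3} and identifies $V_{(\omega,x)}$ isometrically with a fixed $\mathbb{R}^{l}$. It then uses Theorem~\ref{thm:fkomega} and Proposition~\ref{nustar} verbatim on the fixed fibre ${\rm P}\mathbb{R}^{l}$.
\item You keep the original cocycle and re-prove Proposition~\ref{nustar} over the varying fibres $\mathbb{P}(V_{(\omega,x)})$.
\end{itemize}

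\textbf{Your reason for avoiding conjugation is mistaken.} Once the frames are orthonormalised, as the paper does, the conjugated matrix satisfies $\|B(\omega,x)^{\pm1}\|_{\rm op}=\|(L(x)|_{V_{(\omega,x)}})^{\pm1}\|_{\rm op}\le\|L(x)^{\pm1}\|_{\rm op}$. This is exactly the paper's remark, so log-integrability is automatic.

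\textbf{What each route buys.} Conjugation means nothing has to be re-proved. Your route buys something different. The conjugated cocycle depends on $(\omega,x)$, not on $x$ alone, so Theorem~\ref{thm:fkomega} as stated does not literally apply to it. You sidestep this by applying the subadditive ergodic theorem for the skew product $(\omega,x)\mapsto(\sigma\omega,T_{\pi_1(\omega)}x)$ directly to $\log\|L^{n}_{\omega}(x)|_{V_{(\omega,x)}}\|_{\rm op}$.

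\textbf{A step to make explicit.} In the fibrewise re-proof you must show that the limit measure $\nu^{*}$ gives full mass to the graph $\{(\omega,x,[v]) : v\in V_{(\omega,x)}\}$. Otherwise $w^{*}(\omega,x)$ need not lie in $\mathbb{P}(V_{(\omega,x)})$. Test against $\psi(\omega,x,[v])=\mathrm{dist}([v],\mathbb{P}(V_{(\omega,x)}))$:
\begin{itemize}
\item $\psi$ lies in $\mathcal{M}$, being bounded, Lipschitz in $[v]$, and measurable in $(\omega,x)$.
\item $\psi$ vanishes $\nu_{n}$-almost everywhere for every $n$, since $\gamma_{n}$ lives on the graph and $\mathcal{G}$ preserves it.
\end{itemize}
Hence $\int\psi\,\mathrm{d}\nu^{*}=0$, and $\nu^{*}$ lives on the graph.

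\textbf{Your step (iii) fixes a gap in the paper.} The paper's proof of Corollary~\ref{cor:nustar} passes from $\int\phi\,\mathrm{d}\nu_{*}=\int\phi_{*}\,\mathrm{d}(\mathfrak{B}\times\mu)$ straight to $\nu_{*}(E)=1$. That inference is valid only because of the one-sided bound $\widetilde{\phi}\le\phi^{*}\circ\mathrm{Proj}$ (respectively $\widetilde{\phi}\ge\phi_{*}\circ\mathrm{Proj}$), together with integrability of $\phi^{*}$ and $\phi_{*}$. You supply exactly this.
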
 

\begin{proof} 
Given that $H$ is a $\sigma$-invariant measurable sub-bundle that determines the vector subspace $V_{\left( \omega,\, x \right)}$, we have the set $X_{l} = \left\{ \left( \omega,\, x \right) \in \Sigma_{N}^{+} \times X : \dim \left(V_{(\omega,\, x)}\right) = l \right\}$, for some $1 \le l \le d$, as defined in Theorem \ref{equivalent:3} to be measurable. Thus, there exists a collection of measurable vector fields $\left\{ u_{1} (\omega,\, x), u_{2} (\omega,\, x), \cdots, u_{l} (\omega,\, x) \right\}$ that gives rise to an orthonormal basis (using the Gram-Schmidtt process) represented by $\left\{ v_{1} (\omega,\, x), v_{2} (\omega,\, x), \cdots, v_{l} (\omega,\, x) \right\}$ for the vector subspace $V_{\left( \omega,\, x \right)} \subseteq \mathbb{R}^{d}$, whose dimension is $l$. Thus, one can find an isometric isomorphism between $V_{\left( \omega,\, x \right)}$ and the Euclidean space $\mathbb{R}^{l}$. Further, denote the restriction of $L(x)$ on the $l$-dimensional vector space $V_{\left( \omega,\, x \right)}$ as $L_{l} (x) = \left. L(x) \right|_{V_{\left( \omega,\, x \right)}} \in {\rm GL}_{l} (\mathbb{R})$. 

Now, define a dynamical system 
\begin{equation} 
\label{essell} 
S_{l}\ \ :\ \ \Sigma_{N}^{+} \times X \times \mathbb{R}^{l} \circlearrowright\ \ \ \text{given by}\ \ \ S_{l} \left( \omega,\, x,\, v \right)\ =\ \left( \sigma \omega,\, T_{\pi_{1} \left( \omega \right)} x,\, L_{l} (x) v \right). 
\end{equation} 
Note that $\displaystyle{\left\| L_{l} (x)^{\pm 1} \right\|_{{\rm op}} \le \left\| L (x)^{\pm 1} \right\|_{{\rm op}}}$ and hence, $\log^{+} \left\| L_{l} (x)^{\pm 1} \right\| \in \mathscr{L}^{1} (\mu)$. Thus, Theorem \ref{thm:fkomega} asserts the existence of the limits 
\[ \eta^{\pm} \left( \omega,\, x \right)\ \ =\ \ \lim_{n\, \to\, \infty} \frac{1}{n} \log \left\| \prod_{j\, =\, 0}^{n - 1} \left( L_{l} \left( T_{\pi_{j} \left( \omega \right)} x \right) \right)^{\pm 1} \right\|_{{\rm op}}^{\pm 1}\ \ =\ \ \lim_{n\, \to\, \infty} \frac{1}{n} \log \left\| \prod_{j\, =\, 0}^{n - 1} \left. \left( L \left( T_{\pi_{j} \left( \omega \right)} x \right) \right)^{\pm 1} \right|_{V_{\left( \omega,\, x \right)}} \right\|_{{\rm op}}^{\pm 1}, \] 
for $\left( \mathfrak{B} \times \mu \right)$-almost every $\left( \omega,\, x \right) \in \Sigma_{N}^{+} \times X$. 

In order to apply Corollary \ref{cor:nustar} of Proposition \ref{nustar}, we projectivise the Euclidean space $\mathbb{R}^{l}$ and consider ${\rm P}\mathbb{R}^{l}$ that is equal to the set of all $1$-dimensional subspaces of $\mathbb{R}^{l}$ given by $\left\{ tv : t \in \mathbb{R} \right\}$ for $v \in \mathbb{R}^{l}_{*}$. Consequently, we redefine the dynamical system $S_{l}$ defined in Equation \eqref{essell} on $\Sigma_{N}^{+} \times X \times \mathbb{R}^{l}$ as $\mathcal{S}_{l}$ on $\Sigma_{N}^{+} \times X \times {\rm P}\mathbb{R}^{l}$ given by $\mathcal{S}_{l} \left( \omega,\, x,\, [v] \right) = \left( \sigma \omega,\, T_{\pi_{1} \left( \omega \right)} x,\, \left[ L_{l} (x) v \right] \right)$. Now, define the function $\phi : \Sigma_{N}^{+} \times X \times {\rm P}\mathbb{R}^{l} \longrightarrow \mathbb{R}$ by $\displaystyle{\phi \left( \omega,\, x,\, [v] \right) = \log \left( \frac{\left\| L_{l} (x) v \right\|}{\left\| v \right\|} \right)}$. Then, $\phi \in \mathcal{M} \left( \Sigma_{N}^{+} \times X \times {\rm P}\mathbb{R}^{l} \right)$, since $\phi \left( \omega,\, x,\, \cdot \right) \in \mathcal{C} \left( {\rm P}\mathbb{R}^{l} \right)$ for almost every $\left( \omega,\, x \right) \in \Sigma_{N}^{+} \times X$. Thus, applying Proposition \ref{nustar} to the dynamical system $\mathcal{S}_{l}$ on $\Sigma_{N}^{+} \times X \times {\rm P}\mathbb{R}^{l}$, we have 
\[ \phi^{*} \left( \omega,\, x \right) = \lim_{n\, \to\, \infty} \frac{1}{n} \sup_{v\, \in\, \mathbb{R}^{l}_{*}} \sum_{j\, =\, 0}^{n - 1} \phi \left( \mathcal{S}_{l}^{j} \left( \omega,\, x,\, [v] \right) \right) = \lim_{n\, \to\, \infty} \frac{1}{n} \log \left\| \prod_{j\, =\, 0}^{n - 1} \left( L_{l} \left( T_{\pi_{j} \left( \omega \right)} x \right) \right) \right\|_{{\rm op}} = \eta^{+} \left( \omega,\, x \right). \] 
Analogously, we also have $\phi_{*} \left( \omega,\, x \right) = \eta^{-} \left( \omega,\, x \right)$. 

Since $\mathbb{R}^{l}$ is isometrically isomorphic to $V_{\left( \omega,\, x \right)}$, we have for any vector $v \in \mathbb{R}^{l}_{*}$, 
\[ \limsup_{n\, \to\, \infty} \frac{1}{n} \sum_{j\, =\, 0}^{n - 1} \phi \left( \mathcal{S}_{l}^{j} \left( \omega,\, x,\, [v] \right) \right) = \limsup_{n\, \to\, \infty} \frac{1}{n} \log \left( \frac{\left\| \prod\limits_{j\, =\, 0}^{n - 1} \left( L_{l} \left( T_{\pi_{j} \left( \omega \right)} x \right) \right) v \right\|}{\left\| v \right\|} \right) = \lambda \left( \omega,\, x,\, v \right). \]

Now, Corollary \ref{cor:nustar} asserts the existence of the vectors $w_{*} \left( \omega,\, x \right)$ and $w^{*} \left( \omega,\, x \right)$ in $\mathbb{R}^{l}_{*}$ such that 
\[ \lambda \left( \omega,\, x,\, w^{*} \left( \omega,\, x \right) \right) = \phi^{*} \left( \omega,\, x \right) = \eta^{+} \left( \omega,\, x \right)\ \ \ \text{and}\ \ \ \lambda \left( \omega,\, x,\, w_{*} \left( \omega,\, x \right) \right) = \phi_{*} \left( \omega,\, x \right) = \eta^{-} \left( \omega,\, x \right). \]

Analogous to Lemma \ref{lambdapmbound}, in the current scenario we have $\eta^{-} \left( \omega,\, x \right) \le \lambda \left( \omega,\, x,\, v \right) \le \eta^{+} \left( \omega,\, x \right)$ for all $v \in \mathbb{R}^{l}$. Hence, 
\begin{displaymath} 
\begin{array}{r c l c l} 
\lambda \left( \omega,\, x,\, w^{*} \left( \omega,\, x \right) \right) & = & \eta^{+} \left( \omega,\, x \right) & = & \max \left\{ \lambda \left( \omega,\, x,\, v \right) : v \in \mathbb{R}^{l}_{*} \right\} \vspace{+5pt} \\ 
\lambda \left( \omega,\, x,\, w_{*} \left( \omega,\, x \right) \right) & = & \eta^{-} \left( \omega,\, x \right) & = & \min \left\{ \lambda \left( \omega,\, x,\, v \right) : v \in \mathbb{R}^{l}_{*} \right\}. 
\end{array} 
\end{displaymath} 
\end{proof}

\section{The action of $L(x)$ on $V_{\left( \omega,\, x \right)}^{\perp} \oplus V_{\left( \omega,\, x \right)}$} 
\label{action} 

Let $H$ be a $\sigma$-invariant measurable sub-bundle. Consider a $\left( \mathfrak{B} \times \mu \right)$-typical point $\left( \omega,\, x \right) \in \Sigma_{N}^{+} \times X$. Then, $V_{\left( \omega,\, x \right)}^{\perp} \oplus V_{\left( \omega,\, x \right)} = \mathbb{R}^{d} = V_{\left( \sigma \omega,\, T_{\pi_{1} \left( \omega \right)} x \right)}^{\perp} \oplus V_{\left( \sigma \omega,\, T_{\pi_{1} \left( \omega \right)} x \right)}$ provide for two orthogonal decompositions of $\mathbb{R}^{d}$. Then, we can express the matrix $L(x)$ as 
\begin{eqnarray} 
\label{eqn:decompL}
L (x)\ \ =\ \ \begin{pmatrix} L_{11} (x) & L_{12} (x) \\ L_{21} (x) & L_{22} (x) \end{pmatrix},\ \ \ \text{where}\ \ \ L_{11} (x) & : & V_{\left( \omega,\, x \right)}^{\perp} \longrightarrow V_{\left( \sigma \omega,\, T_{\pi_{1} \left( \omega \right)} x \right)}^{\perp}, \nonumber \\ 
L_{12} (x) & : & V_{\left( \omega,\, x \right)} \longrightarrow V_{\left( \sigma \omega,\, T_{\pi_{1} \left( \omega \right)} x \right)}^{\perp}, \nonumber \\ 
L_{21} (x) & : & V_{\left( \omega,\, x \right)}^{\perp} \longrightarrow V_{\left( \sigma \omega,\, T_{\pi_{1} \left( \omega \right)} x \right)}\ \ \text{and} \nonumber \\ 
L_{22} (x) & : & V_{\left( \omega,\, x \right)} \longrightarrow V_{\left( \sigma \omega,\, T_{\pi_{1} \left( \omega \right)} x \right)}. 
\end{eqnarray} 
Since $H$ is a $\sigma$-invariant measurable sub-bundle, we have $L_{12} (x) \equiv 0$. Further, since $\log^{+} \left\| L(x)^{\pm 1} \right\| \in \mathscr{L}^{1} \left( \mu \right)$, we have $\log^{+} \left\| L_{11}(x)^{\pm 1} \right\|, \log^{+} \left\| L_{21}(x)^{\pm 1} \right\|, \log^{+} \left\| L_{22}(x)^{\pm 1} \right\| \in \mathscr{L}^{1} \left( \mu \right)$. 

\begin{lemma} 
\label{lem:twoparts} 
For $\left( \mathfrak{B} \times \mu \right)$-almost every $\left( \omega,\, x \right) \in \Sigma_{N}^{+} \times X$, we have the following. 
\begin{enumerate} 
\item For every $u \in V_{\left( \omega,\, x \right)}^{\perp} \setminus \{ 0 \}$ and $v \in V_{\left( \omega,\, x \right)}$, we have 
\[ \limsup_{n\, \to\, \infty} \frac{1}{n} \log \left\| \prod\limits_{j\, =\, 0}^{n - 1} \left( L_{11} \left( T_{\pi_{j} \left( \omega \right)} x \right) \right) u \right\|\ \ =\ \ \limsup_{n\, \to\, \infty} \frac{1}{n} \log \left\| \prod\limits_{j\, =\, 0}^{n - 1} \left( L \left( T_{\pi_{j} \left( \omega \right)} x \right) \right) \left( u + v \right) \right\|. \] 
\item Suppose $\displaystyle{\lim_{n\, \to\, \infty} \frac{1}{n} \log \left\| \prod\limits_{j\, =\, 0}^{n - 1} \left( L_{11} \left( T_{\pi_{j} \left( \omega \right)} x \right) \right) u \right\|}$ exists for some vector $u \in V_{\left( \omega,\, x \right)}^{\perp} \setminus \{ 0 \}$, then so does $\displaystyle{\lim_{n\, \to\, \infty} \frac{1}{n} \log \left\| \prod\limits_{j\, =\, 0}^{n - 1} \left( L \left( T_{\pi_{j} \left( \omega \right)} x \right) \right) \left( u + v \right)\right\|}$ for every $v \in V_{\left( \omega,\, x \right)} \setminus \{ 0 \}$. Moreover, in this case, 
\[ \lim_{n\, \to\, \infty} \frac{1}{n} \log \left\| \prod\limits_{j\, =\, 0}^{n - 1} \left( L_{11} \left( T_{\pi_{j} \left( \omega \right)} x \right) \right) u \right\|\ \ =\ \ \lim_{n\, \to\, \infty} \frac{1}{n} \log \left\| \prod\limits_{j\, =\, 0}^{n - 1} \left( L \left( T_{\pi_{j} \left( \omega \right)} x \right) \right) \left( u + v \right) \right\|. \] 
\end{enumerate} 
\end{lemma}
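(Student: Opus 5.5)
The plan is to exploit the block lower-triangular form of $L(x)$ in Equation \eqref{eqn:decompL}, which comes from $L_{12}\equiv 0$. Write $L^{n}_{\omega}(x)=\prod_{j=0}^{n-1}L(T_{\pi_{j}(\omega)}x)$ and $L^{n}_{11,\omega}(x)=\prod_{j=0}^{n-1}L_{11}(T_{\pi_{j}(\omega)}x)$. I would first establish, by induction on $n$ using the $\sigma$-invariance $L(x)V_{(\omega,x)}=V_{(\sigma\omega,T_{\pi_{1}(\omega)}x)}$, that for $u\in V^{\perp}_{(\omega,x)}$ and $v\in V_{(\omega,x)}$
\[ L^{n}_{\omega}(x)(u+v)\;=\;L^{n}_{11,\omega}(x)u\;+\;\Big(\sum_{j=0}^{n-1}L^{n-j-1}_{22,\sigma^{j+1}\omega}\,L_{21}(T_{\pi_{j}(\omega)}x)\,L^{j}_{11,\omega}(x)u\;+\;L^{n}_{22,\omega}(x)v\Big). \]
The first summand lies in $V^{\perp}_{(\sigma^{n}\omega,T_{\pi_{n}(\omega)}x)}$ and the bracket lies in $V_{(\sigma^{n}\omega,T_{\pi_{n}(\omega)}x)}$. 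Since the two pieces are orthogonal, $\|L^{n}_{\omega}(x)(u+v)\|\ge\|L^{n}_{11,\omega}(x)u\|$. This gives the inequality ``$\le$'' in part 1 immediately, and the corresponding liminf inequality needed for part 2.

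For the reverse inequality I would bound the bracket term by term. First, $\log^{+}\|L_{21}\|$ and $\log^{+}\|L_{22}^{\pm1}\|$ are integrable. Applying Theorem \ref{thm:bet} to the skew product $(\omega,x)\mapsto(\sigma\omega,T_{\pi_{1}(\omega)}x)$ (as in the proof of Corollary \ref{cor:nustar}) then gives the temperedness estimate $\frac1j\log^{+}\|L_{21}(T_{\pi_{j}(\omega)}x)\|\to0$. Second, Proposition \ref{prop:usein10} and Theorem \ref{thm:fkomega}, applied to the restricted cocycle $L_{22}$ on the invariant bundle $V$, give uniform control of $\|L^{m}_{22,\sigma^{j+1}\omega}\|$ by $e^{m(\eta^{+}+\varepsilon)}$ up to a tempered constant, where $\eta^{+}$ is the top exponent on $V$. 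Write $\alpha=\limsup\frac1n\log\|L^{n}_{11,\omega}(x)u\|$. Then each term of the sum is at most $C_{\varepsilon}e^{n\varepsilon}e^{(n-j)\eta^{+}}e^{j(\alpha+\varepsilon)}$, the last term is at most $C_{\varepsilon}e^{n(\eta^{+}+\varepsilon)}\|v\|$, and summing the $n$ terms costs only a factor $n$. If $\eta^{+}\le\alpha$, the bracket therefore grows at most like $e^{n(\alpha+2\varepsilon)}$, and letting $\varepsilon\to0$ yields part 1. For part 2, when the limit $\alpha$ exists the same bounds show that $\frac1n\log\|L^{n}_{\omega}(x)(u+v)\|$ is squeezed between $\frac1n\log\|L^{n}_{11,\omega}(x)u\|$ and $\alpha+o(1)$, so the limit exists and equals $\alpha$.

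The main obstacle is exactly the comparison $\eta^{+}\le\alpha$ between the top exponent on $V$ and the growth along $V^{\perp}$. Without it the statement cannot hold for all $v$: choosing $v\in V$ with $\lambda(\omega,x,v)>\alpha$ would make the right-hand side strictly larger. I therefore expect to prove the lemma under this ordering, and to check that it holds in its application in Section \ref{partcomega}. There $V$ should be taken to be a slow Oseledets subspace such as $V^{k-1}_{(\omega,x)}$, on which all exponents are at most $\lambda_{k-1}(\omega,x)$, while $u\notin V$ has exponent $\lambda_{k}(\omega,x)>\lambda_{k-1}(\omega,x)$. The remaining technical work is the tempered-constant bookkeeping, namely making the constants $C_{\varepsilon}$ uniform along the orbit $(\sigma^{j}\omega,T_{\pi_{j}(\omega)}x)$. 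I would do this with a Birkhoff-type argument applied to $\log^{+}\|L^{\pm1}\|$, as above.
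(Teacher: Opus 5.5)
Your argument is essentially the paper's proof, with the same four ingredients:
\begin{itemize}
\item the block lower-triangular expansion of $L^{n}_{\omega}(x)(u+v)$;
\item the lower bound from the orthogonal splitting, which the paper phrases through Lemma~\ref{lem:seq};
\item a term-by-term upper bound on the off-diagonal sum, using the growth of $L^{j}_{11}u$, a tempered bound on $L_{21}$ along the orbit (Borel--Cantelli in the paper, Birkhoff in yours), and a tempered exponential bound on the $L_{22}$-products;
\item for that last bound, the content of Lemma~\ref{lem:estimates}(b), where the sup-function $\rho^{\epsilon}$ supplies the uniformity along the orbit that Theorem~\ref{thm:fkomega} and Proposition~\ref{prop:usein10} alone do not give.
\end{itemize}
The only structural difference is that you keep $L^{n}_{22}v$ inside the bracket. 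The paper instead first shows $\lambda(\omega,x,u+v)=\lambda(\omega,x,u)$ and reduces to $v=0$.

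Your caveat is correct: for an arbitrary $\sigma$-invariant sub-bundle the lemma is false. Take every $T_{i}$ to be the identity, $L\equiv\mathrm{diag}(1,2)$, $V=\mathrm{span}(e_{2})$, $u=e_{1}$ and $v=e_{2}$. Then the left side is $0$ and the right side is $\log 2$, in both parts.

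The paper's proof tacitly imports the same ordering when it chooses invariant functions $\alpha<\beta$ with $\lambda\le\alpha$ on $V\setminus\{0\}$ and $\lambda\ge\beta$ off $V$. Its remark that $\eta^{\pm}$ serve as such functions is not valid. Such a choice exists only when the exponents on $V$ lie strictly below those off $V$, for instance $V^{k-1}_{(\omega,x)}$ with $\lambda_{k-1}<\lambda_{k}$, as in Section~\ref{partcomega}.

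One adjustment when you check your hypothesis there. It concerns your $\alpha$ (the $L_{11}$-growth of $u$), whereas what the application gives you is $\lambda(\omega,x,u)\ge\lambda_{k}>\eta^{+}$. Bridge the two with the bound $\lambda(\omega,x,u)\le\max\{\alpha,\eta^{+}\}$. Your term-by-term estimate yields this with no ordering assumption; it is the role of the paper's $\gamma$. Together with your orthogonality bound it forces $\alpha=\lambda(\omega,x,u)>\eta^{+}$.
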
 

\begin{proof} 
\begin{enumerate} 
\item For any $u \in V_{\left( \omega,\, x \right)}^{\perp} \setminus \{ 0 \}$ and $v \in V_{\left( \omega,\, x \right)}$, we know from Equation \eqref{lamda(v1+v2)} 
\begin{equation} 
\label{ninepointone} 
\lambda \left( \omega,\, x,\, (u + v) \right)\ \ \le\ \ \max \left\{ \lambda \left( \omega,\, x,\, u \right),\ \lambda \left( \omega,\, x,\, v \right) \right\}. 
\end{equation} 
Choose $\left( \mathfrak{B} \times \mu \right)$-integrable functions $\alpha$ and $\beta$ that satisfies 
\begin{enumerate} 
\item $\alpha < \beta$ on $\Sigma_{N}^{+} \times X$, 
\item $\alpha \left( \sigma^{j} \omega,\, T_{\pi_{j} \left( \omega \right)} x \right) = \alpha \left( \omega,\, x \right)$ and $\beta \left( \sigma^{j} \omega,\, T_{\pi_{j} \left( \omega \right)} x \right) = \beta \left( \omega,\, x \right)$ for all $j \in \mathbb{Z}_{+}$ and 
\item $\lambda \left( \omega,\, x,\, v \right) \le \alpha \left( \omega,\, x \right)$ for every $v \in V_{\left( \omega,\, x \right)} \setminus \{ 0 \}$ and $\lambda \left( \omega,\, x,\, u \right) \ge \beta \left( \omega,\, x \right)$ for every $u \in \mathbb{R}^{d} \setminus V_{\left( \omega,\, x \right)}$. 
\end{enumerate} 
Note that such a choice of functions is possible, since $\eta^{-}$ and $\eta^{+}$ are two such functions that satisfy the above requirements. Hence, for any $v \in V_{\left( \omega,\, x \right)} \setminus \{ 0 \}$ and $u \in \mathbb{R}^{d} \setminus V_{\left( \omega,\, x \right)}$, we have 
\begin{equation} 
\label{eqn:ablcompare} 
\lambda \left( \omega,\, x,\, v \right)\ \le\ \alpha \left( \omega,\, x \right)\ <\ \beta \left( \omega,\, x \right)\ \le\ \lambda \left( \omega,\, x,\, u \right), 
\end{equation} 
prompting that the maximum in Equation \eqref{ninepointone} is attained by the first term. Thus, $\lambda \left( \omega,\, x,\, (u + v) \right) \le \lambda \left( \omega,\, x,\, u \right)$. We now prove the other way inequality and then proceed to conclude the first part of the lemma. Consider 
\begin{equation} 
\label{ninepointthree} 
\lambda \left( \omega,\, x,\, u \right)\ \ \le\ \ \max \left\{ \lambda \left( \omega,\, x,\, \left( u + v \right) \right),\ \lambda \left( \omega,\, x,\, v \right) \right\}. 
\end{equation} 
Suppose we prove that the maximum occurs in the first quantity in Equation \eqref{ninepointthree}, we can assert that $\lambda \left( \omega,\, x,\, \left( u + v \right) \right) = \lambda \left( \omega,\, x,\, u \right)$. Towards that end, we first observe that taking $u \in V_{\left( \omega,\, x \right)}^{\perp} \setminus \{ 0 \}$ and $v \in V_{\left( \omega,\, x \right)} \setminus \{ 0 \}$ ensures that $u + v \notin V_{\left( \omega,\, x \right)}$. Thus, $u + v \in \mathbb{R}^{d} \setminus V_{\left( \omega,\, x \right)}$. Now, employing Equation \eqref{eqn:ablcompare}, we observe that the maximum among the two quantities in Equation \eqref{ninepointthree} is attained by the first term, proving that 
\begin{equation} 
\label{luvlu} 
\lambda \left( \omega,\, x,\, \left( u + v \right) \right)\ \ =\ \ \lambda \left( \omega,\, x,\, u \right),\ \ \ \ \text{for}\ u \in V_{\left( \omega,\, x \right)}^{\perp} \setminus \{ 0 \}\ \text{and}\ v \in V_{\left( \omega,\, x \right)}. 
\end{equation} 
Thus, it is sufficient for us to  with $u \in V_{\left( \omega,\, x \right)}^{\perp} \setminus \{ 0 \}$ and $v \equiv 0$. 

Since the matrix $\displaystyle{L(x) = \begin{pmatrix} L_{11}(x) & 0 \\ L_{21}(x) & L_{22}(x) \end{pmatrix}}$, one can evaluate using induction that 
\[ \prod\limits_{j\, =\, 0}^{n - 1} \left( L \left( T_{\pi_{j} \left( \omega \right)} x \right) \right) = \begin{pmatrix} \prod\limits_{j\, =\, 0}^{n - 1} \left( L_{11} \left( T_{\pi_{j} \left( \omega \right)} x \right) \right) & 0 \\ \left( \prod\limits_{j\, =\, 0}^{n - 1} \left( L \left( T_{\pi_{j} \left( \omega \right)} x \right) \right) \right)_{21} & \prod\limits_{j\, =\, 0}^{n - 1} \left( L_{22} \left( T_{\pi_{j} \left( \omega \right)} x \right) \right) \end{pmatrix}, \] 
where 
\begin{eqnarray*} 
& & \left( \prod\limits_{j\, =\, 0}^{n - 1} \left( L \left( T_{\pi_{j} \left( \omega \right)} x \right) \right) \right)_{21} \\ 
& = & \left( \prod\limits_{j\, =\, 1}^{n - 1} \left( L_{22} \left( T_{\pi_{j} \left( \omega \right)} x \right) \right) \right) L_{21} (x) \\ 
& & +\ \left( \prod\limits_{j\, =\, 2}^{n - 1} \left( L_{22} \left( T_{\pi_{j} \left( \omega \right)} x \right) \right) \right) L_{21} \left( T_{\pi_{1} \left( \omega \right)} x \right) L_{11} (x) \\ 
& & +\ \left( \prod\limits_{j\, =\, 3}^{n - 1} \left( L_{22} \left( T_{\pi_{j} \left( \omega \right)} x \right) \right) \right) L_{21} \left( T_{\pi_{2} \left( \omega \right)} x \right) \left( \prod_{k\, =\, 0}^{1} L_{11} \left( T_{\pi_{k} \left( \omega \right)} x \right) \right) \\ 
& & +\ \cdots \\ 
& & +\ \left( L_{22} \left( T_{\pi_{n - 1} \left( \omega \right)} x \right) \right) L_{21} \left( T_{\pi_{n - 2} \left( \omega \right)} x \right) \left( \prod_{k\, =\, 0}^{n - 3} L_{11} \left( T_{\pi_{k} \left( \omega \right)} x \right) \right) \\ 
& & +\ L_{21} \left( \left( T_{\pi_{n - 1} \left( \omega \right)} x \right) \right) \left( \prod\limits_{j\, =\, 0}^{n - 2} \left( L_{11} \left( T_{\pi_{j} \left( \omega \right)} x \right) \right) \right) \\ 
& = & \sum_{j\, =\, 0}^{n - 1} \left[ \left( \prod_{k\, =\, j + 1}^{n - j - 1} L_{22} \left( T_{\pi_{k} \left( \omega \right)} x \right) \right) L_{21} \left( T_{\pi_{j} \left( \omega \right)} x \right) \left( \prod_{k\, =\, 0}^{j} L_{11} \left( T_{\pi_{k} \left( \omega \right)} x \right) \right) \right]. 
\end{eqnarray*} 

Given any $u \in V_{\left( \omega,\, x \right)}^{\perp} \setminus \{ 0 \}$, in order to estimate an upper bound for $\displaystyle{\left\| \prod\limits_{j\, =\, 0}^{n - 1} \left( L \left( T_{\pi_{j} \left( \omega \right)} x \right) \right) u \right\|}$, we compute an upper bound for $\displaystyle{\left\| \prod\limits_{j\, =\, 0}^{n - 1} \left( L_{11} \left( T_{\pi_{j} \left( \omega \right)} x \right) \right) u \right\|,\ \left\| \prod_{k\, =\, j + 1}^{n - j - 1} L_{22} \left( T_{\pi_{k} \left( \omega \right)} x \right) \right\|_{{\rm op}}}$ and $\displaystyle{\left\| L_{21} \left( T_{\pi_{j} \left( \omega \right)} x \right) \right\|_{{\rm op}}}$ for $0 \le j \le n - 1$. We now state our next lemma that captures these estimates, whose proof is deferred to the end of the current section. 

\begin{lemma} 
\label{lem:estimates}
Given any $\epsilon > 0$, there exist $\left( \mathfrak{B} \times \mu \right)$-measurable functions on $\Sigma_{N}^{+} \times X$, namely $A_{\epsilon}, B_{\epsilon}$ and $C_{\epsilon}$ such that for any $n \in \mathbb{Z}_{+}$, we have 
\begin{enumerate} 
\item $\displaystyle{\left\| \prod\limits_{j\, =\, 0}^{n - 1} \left( L_{11} \left( T_{\pi_{j} \left( \omega \right)} x \right) \right) u \right\| \le A_{\epsilon} \left( \omega,\, x \right) e^{n \gamma \left( \omega,\, x \right) + n \epsilon}}$, where 
\begin{equation} 
\label{gamma} 
\gamma \left( \omega,\, x \right)\ \ =\ \ \max \left\{ \alpha \left( \omega,\, x \right),\ \limsup_{n\, \to\, \infty} \frac{1}{n} \log \left( \left\| \prod\limits_{j\, =\, 0}^{n - 1} \left( L_{11} \left( T_{\pi_{j} \left( \omega \right)} x \right) \right) u \right\| \right) \right\}, 
\end{equation} 
\item $\displaystyle{\left\| \prod_{k\, =\, j + 1}^{n - j - 1} L_{22} \left( T_{\pi_{k} \left( \omega \right)} x \right) \right\|_{{\rm op}} \le B_{\epsilon} \left( \omega,\, x \right) e^{(n - j - 1) \alpha \left( \omega,\, x \right) + n \epsilon}}$, for $0 \le j \le n - 1$ and  
\item $\displaystyle{\left\| L_{21} \left( T_{\pi_{n} \left( \omega \right)} x \right) \right\|_{{\rm op}} \le C_{\epsilon} \left( \omega,\, x \right) e^{n \epsilon}}$. 
\end{enumerate} 
\end{lemma} 

We further state a lemma for positive real sequences, as may be found in \cite{mv:2014}. 

\begin{lemma} 
\label{lem:seq}
Suppose $\left\{ a_{n} \right\}$ and $\left\{ b_{n} \right\}$ are positive real sequences, then 
\begin{eqnarray*} 
\limsup_{n\, \to\, \infty} \frac{1}{n} \log \left( \sqrt{a_{n}^{2} + b_{n}^{2}} \right) & = & \max \left\{ \limsup_{n\, \to\, \infty} \frac{1}{n} \log a_{n}, \limsup_{n\, \to\, \infty} \frac{1}{n} \log b_{n} \right\}, \\ 
\liminf_{n\, \to\, \infty} \frac{1}{n} \log \left( \sqrt{a_{n}^{2} + b_{n}^{2}} \right) & \ge & \max \left\{ \liminf_{n\, \to\, \infty} \frac{1}{n} \log a_{n}, \liminf_{n\, \to\, \infty} \frac{1}{n} \log b_{n} \right\}. 
\end{eqnarray*} 
\end{lemma} 

Thus, 
\begin{eqnarray} 
& & \lambda \left( \omega,\, x,\, u \right) \nonumber \\ 
& = & \limsup_{n\, \to\, \infty} \frac{1}{n} \log \left( \left\| \prod\limits_{j\, =\, 0}^{n - 1} \left( L \left( T_{\pi_{j} \left( \omega \right)} x \right) \right) u \right\| \right) \nonumber \\ 
& = & \limsup_{n\, \to\, \infty} \frac{1}{n} \log \left( \left\| \prod\limits_{j\, =\, 0}^{n - 1} \left( L_{11} \left( T_{\pi_{j} \left( \omega \right)} x \right) \right) u \right\|^{2} \right. \nonumber \\ 
& & + \left. \left\| \sum_{j\, =\, 0}^{n - 1} \left[ \left( \prod_{k\, =\, j + 1}^{n - j - 1} L_{22} \left( T_{\pi_{k} \left( \omega \right)} x \right) \right) L_{21} \left( T_{\pi_{j} \left( \omega \right)} x \right) \left( \prod_{k\, =\, 0}^{j} L_{11} \left( T_{\pi_{k} \left( \omega \right)} x \right) u \right) \right] \right\|^{2} \right)^{\frac{1}{2}} \nonumber \\ 
& = & \max \left\{ \limsup_{n\, \to\, \infty} \frac{1}{n} \log \left( \left\| \prod\limits_{j\, =\, 0}^{n - 1} \left( L_{11} \left( T_{\pi_{j} \left( \omega \right)} x \right) \right) u \right\| \right), \right. \nonumber \\ 
& & \limsup_{n\, \to\, \infty} \frac{1}{n} \log \left( \left\| \sum_{j\, =\, 0}^{n - 1} \left[ \left( \prod_{k\, =\, j + 1}^{n - j - 1} L_{22} \left( T_{\pi_{k} \left( \omega \right)} x \right) \right) L_{21} \left( T_{\pi_{j} \left( \omega \right)} x \right) \right. \right. \right. \nonumber \\ 
& & \left. \left. \left. \left. \hspace{+8cm} \left( \prod_{k\, =\, 0}^{j} L_{11} \left( T_{\pi_{k} \left( \omega \right)} x \right) u \right) \right] \right\| \right) \right\} \nonumber \\ 
& \le & \max \left\{ \gamma \left( \omega,\, x \right), \gamma \left( \omega,\, x \right) + 3 \epsilon \right\}, \hspace{+2cm} \text{where}\ \gamma\ \text{is as defined in Equation \eqref{gamma}} \nonumber \\ 
\label{ninefour}
& & 
\end{eqnarray} 
since 
\begin{eqnarray*} 
& & \left\| \sum_{j\, =\, 0}^{n - 1} \left[ \left( \prod_{k\, =\, j + 1}^{n - j - 1} L_{22} \left( T_{\pi_{k} \left( \omega \right)} x \right) \right) L_{21} \left( T_{\pi_{j} \left( \omega \right)} x \right) \left( \prod_{k\, =\, 0}^{j} L_{11} \left( T_{\pi_{k} \left( \omega \right)} x \right) u \right) \right] \right\| \\ 
& \le & \sum_{j\, =\, 0}^{n - 1} A_{\epsilon} \left( \omega,\, x \right) B_{\epsilon} \left( \omega,\, x \right) C_{\epsilon} \left( \omega,\, x \right) e^{(n - 1) \gamma (x) + (n + 2j) \epsilon} \\ 
& \le & n D_{\epsilon} \left( \omega,\, x \right) e^{n \gamma \left( \omega,\, x \right) + 3n \epsilon}, 
\end{eqnarray*} 
for an appropriately defined $\left( \mathfrak{B} \times \mu \right)$-measurable function $D_{\epsilon}$ on $\Sigma_{N}^{+} \times X$. Therefore, using Equation \eqref{ninefour}, we have 
\begin{equation} 
\label{ninesix} 
\limsup_{n\, \to\, \infty} \frac{1}{n} \log \left( \left\| \prod\limits_{j\, =\, 0}^{n - 1} \left( L_{11} \left( T_{\pi_{j} \left( \omega \right)} x \right) \right) u \right\| \right)\ \le\ \lambda \left( \omega,\, x,\, u \right)\ \le\ \gamma \left( \omega,\, x \right) + 3 \epsilon. 
\end{equation} 
Since $\epsilon$ is an arbitrarily small positive quantity, we obtain $\lambda \left( \omega,\, x,\, u \right) \le \gamma \left( \omega,\, x \right)$. Moreover, by our choice of the functions $\alpha$, that satisfies Equation \eqref{eqn:ablcompare} and $\gamma$, as defined in Equation \eqref{gamma}, it is clear that
\begin{equation} 
\label{nineseven} 
\gamma \left( \omega,\, x \right)\ \ =\ \ \limsup_{n\, \to\, \infty} \frac{1}{n} \log \left( \left\| \prod\limits_{j\, =\, 0}^{n - 1} \left( L_{11} \left( T_{\pi_{j} \left( \omega \right)} x \right) \right) u \right\| \right). 
\end{equation} 
Thus, plugging in Equation \eqref{nineseven} in Equation \eqref{ninesix} along with the arbitrariness of $\epsilon$, we obtain 
\[ \gamma \left( \omega,\, x \right)\ \ =\ \ \limsup_{n\, \to\, \infty} \frac{1}{n} \log \left( \left\| \prod\limits_{j\, =\, 0}^{n - 1} \left( L_{11} \left( T_{\pi_{j} \left( \omega \right)} x \right) \right) u \right\| \right)\ \ \le\ \ \lambda \left( \omega,\, x,\, u \right)\ \ \le\ \ \gamma \left( \omega,\, x \right). \]
Finally, using Equation \eqref{luvlu}, we have 
\[ \limsup_{n\, \to\, \infty} \frac{1}{n} \log \left\| \prod\limits_{j\, =\, 0}^{n - 1} \left( L_{11} \left( T_{\pi_{j} \left( \omega \right)} x \right) \right) u \right\|\ \ =\ \ \lambda \left( \omega,\, x,\, u \right)\ \ =\ \ \lambda \left( \omega,\, x,\, \left( u + v \right) \right), \] 
whenever $u \in V_{\left( \omega,\, x \right)}^{\perp} \setminus \{ 0 \}$ and $v \in V_{\left( \omega,\, x \right)}$. 

\item Let $u \in V_{\left( \omega,\, x \right)}^{\perp} \setminus \{ 0 \}$ and $v \in V_{\left( \omega,\, x \right)}$. Then, 
\begin{eqnarray*} 
& & \liminf_{n\, \to\, \infty} \frac{1}{n} \log \left( \left\| \prod\limits_{j\, =\, 0}^{n - 1} \left( L \left( T_{\pi_{j} \left( \omega \right)} x \right) \right) \left( u + v \right) \right\| \right) \\ 
& = & \liminf_{n\, \to\, \infty} \frac{1}{n} \log \left( \left\| \prod\limits_{j\, =\, 0}^{n - 1} \left( L_{11} \left( T_{\pi_{j} \left( \omega \right)} x \right) \right) u \right\|^{2} \right. \\ 
& & + \left\| \sum_{j\, =\, 0}^{n - 1} \left[ \left( \prod_{k\, =\, j + 1}^{n - j - 1} L_{22} \left( T_{\pi_{k} \left( \omega \right)} x \right) \right) L_{21} \left( T_{\pi_{j} \left( \omega \right)} x \right) \left( \prod_{k\, =\, 0}^{j} L_{11} \left( T_{\pi_{k} \left( \omega \right)} x \right) u \right) \right] \right. \\ 
& & \left. \left. \hspace{+8cm} + \prod\limits_{j\, =\, 0}^{n - 1} \left( L_{22} \left( T_{\pi_{j} \left( \omega \right)} x \right) \right) v \right\|^{2} \right)^{\frac{1}{2}} 
\end{eqnarray*} 

\begin{eqnarray*} 
& \ge & \max \left\{ \liminf_{n\, \to\, \infty} \frac{1}{n} \log \left\| \prod\limits_{j\, =\, 0}^{n - 1} \left( L_{11} \left( T_{\pi_{j} \left( \omega \right)} x \right) \right) u \right\|, \right. \\ 
& & \liminf_{n\, \to\, \infty} \frac{1}{n} \log \left\| \sum_{j\, =\, 0}^{n - 1} \left[ \left( \prod_{k\, =\, j + 1}^{n - j - 1} L_{22} \left( T_{\pi_{k} \left( \omega \right)} x \right) \right) L_{21} \left( T_{\pi_{j} \left( \omega \right)} x \right) \left( \prod_{k\, =\, 0}^{j} L_{11} \left( T_{\pi_{k} \left( \omega \right)} x \right) u \right) \right] \right. \\ 
& & \left. \left. \hspace{+9.3cm} + \prod\limits_{j\, =\, 0}^{n - 1} \left( L_{22} \left( T_{\pi_{j} \left( \omega \right)} x \right) \right) v \right\| \right\} \\ 
& & \hspace{+10.5cm} \text{(using Lemma \ref{lem:seq})} \\ 
& \ge & \liminf_{n\, \to\, \infty} \frac{1}{n} \log \left\| \prod\limits_{j\, =\, 0}^{n - 1} \left( L_{11} \left( T_{\pi_{j} \left( \omega \right)} x \right) \right) u \right\| \\ 
& = & \limsup_{n\, \to\, \infty} \frac{1}{n} \log \left\| \prod\limits_{j\, =\, 0}^{n - 1} \left( L_{11} \left( T_{\pi_{j} \left( \omega \right)} x \right) \right) u \right\| \hspace{+4.6cm} (\text{by hypothesis}) \\ 
& = & \limsup_{n\, \to\, \infty} \frac{1}{n} \log \left\| \prod\limits_{j\, =\, 0}^{n - 1} \left( L \left( T_{\pi_{j} \left( \omega \right)} x \right) \right) \left( u + v \right) \right\| \hspace{+1.8cm} (\text{by part (1) of Lemma \ref{lem:twoparts}}), 
\end{eqnarray*} 
thus implying in this case that 
\[ \lim_{n\, \to\, \infty} \frac{1}{n} \log \left\| \prod\limits_{j\, =\, 0}^{n - 1} \left( L_{11} \left( T_{\pi_{j} \left( \omega \right)} x \right) \right) u \right\|\ \ =\ \ \lim_{n\, \to\, \infty} \frac{1}{n} \log \left\| \prod\limits_{j\, =\, 0}^{n - 1} \left( L \left( T_{\pi_{j} \left( \omega \right)} x \right) \right) \left( u + v \right) \right\|. \] 
\end{enumerate} 
\end{proof} 

We now prove Lemma \ref{lem:estimates}. 

\begin{proof} \big(of Lemma \ref{lem:estimates}\big) 
\begin{enumerate} 
\item[$(a)$] From the definition of the function $\gamma$, as written in Equation \eqref{gamma}, we have that 
\[ \limsup_{n\, \to\, \infty} \frac{1}{n} \log \left( \left\| \prod\limits_{j\, =\, 0}^{n - 1} \left( L_{11} \left( T_{\pi_{j} \left( \omega \right)} x \right) \right) u \right\| \right)\ \ \le\ \ \gamma \left( \omega,\, x \right). \] 
Thus, for any given $\epsilon > 0$, there exists a threshold, say $N_{1} \in \mathbb{Z}_{+}$ such that for every $n > N_{1}$, we have $\left\| \prod\limits_{j\, =\, 0}^{n - 1} \left( L_{11} \left( T_{\pi_{j} \left( \omega \right)} x \right) \right) u \right\| \le e^{n \gamma \left( \omega,\, x \right) + n \epsilon}$. Now defining 
\[ A_{\epsilon} = \max \left\{ 1,\; \max_{1\, \le\, k\, \le\, N_{1}} \left\{ \frac{\left\| \prod\limits_{j\, =\, 0}^{k - 1} \left( L_{11} \left( T_{\pi_{j} \left( \omega \right)} x \right) \right) u \right\|}{e^{k \gamma \left( \omega,\, x \right) + k \epsilon}} \right\} \right\}, \] 
completes the proof. 

\item[$(b)$] In order to prove this part, we define a function as follows: For any fixed $\mathfrak{p} \in \mathbb{Z}_{+} \cup \left\{ 0 \right\}$ and $\epsilon > 0$, consider 
\begin{eqnarray} 
& & \rho^{\epsilon} \left( \sigma^{\mathfrak{p}} \omega,\, x \right) \nonumber \\ 
& = & \sup_{\mathfrak{q}\, \in\, \mathbb{Z}_{+} \cup \{ 0 \}} \left\| L_{22} \left( T_{\omega_{\mathfrak{p} + \mathfrak{q} - 1}} \circ \cdots \circ T_{\omega_{\mathfrak{p} + 1}} x \right) \cdots L_{22} \left( T_{\omega_{\mathfrak{p} + 1}} x \right) L_{22} (x) \right\|_{{\rm op}} e^{- \mathfrak{q} \left( \alpha \left( \sigma^{\mathfrak{p}} \omega,\, x \right) + \epsilon \right)}. \nonumber \\ 
\label{gsmn} 
& & 
\end{eqnarray} 
The matrices in the product that appears in the definition of $\rho^{\epsilon}$ in Equation \eqref{gsmn} in the degenerate case of the supremum being achieved at $\mathfrak{q} = 0$ for $\mathfrak{p} = 0$ must be understood, in accordance with Equation \eqref{eqn:s^0}, as the identity matrix $I_{d} \in {\rm GL}_{d} \left( \mathbb{R} \right)$. Thus, if the supremum is achieved in the definition of $\rho^{\epsilon}$ at $\mathfrak{q} = 0$, then $\rho^{\epsilon} \left( \omega,\, x \right) \equiv 1$. Moreover, for $\left( \mathfrak{B} \times \mu \right)$-almost every $\left( \omega,\, x \right) \in \Sigma_{N}^{+} \times X$, we have $1 \le \rho^{\epsilon} \left( \sigma^{\mathfrak{p}} \omega,\, x \right) < \infty$, for any $\mathfrak{p} \in \mathbb{Z}_{+} \cup \{ 0 \}$. 

Note that by definition, we have 
\begin{eqnarray*} 
& & \rho^{\epsilon} \left( \sigma \omega,\, T_{\omega_{1}} x \right) \\ 
& = & \sup_{\mathfrak{q}\, \in\, \mathbb{Z}_{+} \cup \{ 0 \}} \left\| L_{22} \left( T_{\pi_{\mathfrak{q}} \left( \omega \right)} x \right) \cdots L_{22} \left( T_{\pi_{1} \left( \omega \right)} x \right) \right\|_{{\rm op}} e^{- \mathfrak{q} \left( \alpha \left( \sigma \omega,\, T_{\pi_{1} \left( \omega \right)} x \right) + \epsilon \right)} \\ 
& = & \sup_{\mathfrak{q}\, \in\, \mathbb{Z}_{+} \cup \{ 0 \}} \left\| L_{22} \left( T_{\pi_{\mathfrak{q}} \left( \omega \right)} x \right) \cdots L_{22} \left( T_{\pi_{1} \left( \omega \right)} x \right) \left[ L_{22} (x) L_{22} (x)^{-1} \right] \right\|_{{\rm op}} e^{- \mathfrak{q} \left( \alpha \left( \omega,\, x \right) + \epsilon \right)} \\ 
& \le & \rho^{\epsilon} \left( \omega,\, x \right) e^{\left( \alpha \left( \omega,\, x \right) + \epsilon \right)} \left\| \left[ L_{22} (x) \right]^{-1} \right\|_{{\rm op}}. 
\end{eqnarray*} 
Thus, 
\begin{equation} 
\label{gepsilonub} 
\log \left( \rho^{\epsilon} \left( \sigma \omega,\, T_{\omega_{1}} x \right) \right) - \log \left( \rho^{\epsilon} \left( \omega,\, x \right) \right)\ \ \le\ \ \log^{+} \left( \left\| \left[ L_{22} (x) \right]^{-1} \right\|_{{\rm op}} \right) + \alpha \left( \omega,\, x \right) + \epsilon. 
\end{equation} 
Also, 
\begin{eqnarray*} 
\rho^{\epsilon} \left( \sigma \omega,\, T_{\omega_{1}} x \right) & = & \sup_{\mathfrak{q}\, \in\, \mathbb{Z}_{+} \cup \{ 0 \}} \left\| L_{22} \left( T_{\pi_{\mathfrak{q}} \left( \omega \right)} x \right) \cdots L_{22} \left( T_{\pi_{1} \left( \omega \right)} x \right) \right\|_{{\rm op}} \frac{\left\| L_{22} (x) \right\|_{{\rm op}}}{\left\| L_{22} (x) \right\|_{{\rm op}}} e^{- \mathfrak{q} \left( \alpha \left( \omega,\, x \right) + \epsilon \right)} \\ 
& \ge & \rho^{\epsilon} \left( \omega,\, x \right) e^{\left( \alpha \left( \omega,\, x \right) + \epsilon \right)} \left\| L_{22} (x) \right\|^{-1}_{{\rm op}}, 
\end{eqnarray*} 
that gives 
\begin{equation} 
\label{gepsilonlb} 
\log \left( \rho^{\epsilon} \left( \sigma \omega,\, T_{\omega_{1}} x \right) \right) - \log \left( \rho^{\epsilon} \left( \omega,\, x \right) \right)\ \ \ge\ \ \log \left( \left\| L_{22} (x) \right\|^{-1}_{{\rm op}} \right) + \alpha \left( \omega,\, x \right) + \epsilon. 
\end{equation} 

Hence, independent of the case whether the supremum is achieved at $\mathfrak{q} = 0$ or $\mathfrak{q} \in \mathbb{Z}_{+}$, we have from Equations \eqref{gepsilonub} and \eqref{gepsilonlb} that 
\begin{eqnarray*} 
\min \left\{ 0,\; - \log^{+} \left\| L_{22} (x) \right\|_{{\rm op}} + \alpha \left( \omega,\, x \right) + \epsilon \right\} & \le & \log \left( \rho^{\epsilon} \left( \sigma \omega,\, T_{\omega_{1}} x \right) \right) - \log \left( \rho^{\epsilon} \left( \omega,\, x \right) \right) \\ 
& \le & \log^{+} \left( \left\| \left[ L_{22} (x) \right]^{-1} \right\|_{{\rm op}} \right) + \alpha \left( \omega,\, x \right) + \epsilon, 
\end{eqnarray*} 
thus, making the function $\log \left( \rho^{\epsilon} \left( \sigma \omega,\, T_{\omega_{1}} x \right) \right) - \log \left( \rho^{\epsilon} \left( \omega,\, x \right) \right)$ to be $\left( \mathfrak{B} \times \mu \right)$-integrable. Analogously, one may prove that $\log \left( \rho^{\epsilon} \left( \sigma^{\mathfrak{p}} \omega,\, T_{\pi_{\mathfrak{p}} \left( \omega \right)} x \right) \right) - \log \left( \rho^{\epsilon} \left( \sigma^{\mathfrak{p} - 1} \omega,\, T_{\pi_{\mathfrak{p} - 1} \left( \omega \right)} x \right) \right)$ is integrable, for any $\mathfrak{p} \in \mathbb{Z}_{+}$. 

Applying the assertion of Theorem \ref{thm:ketomega} to the sequence 
\[ \left\{ \frac{1}{\mathfrak{p}} \sum_{j\, =\, 0}^{\mathfrak{p} - 1} \left[ \log \left( \rho^{\epsilon} \left( \sigma^{j + 1} \omega,\, T_{\pi_{j + 1} \left( \omega \right)} x \right) \right) - \log \left( \rho^{\epsilon} \left( \sigma^{j} \omega,\, T_{\pi_{j} \left( \omega \right)} x \right) \right) \right] \right\}_{\mathfrak{p}\, \in\, \mathbb{Z}_{+}}, \] 
we note that $\displaystyle{\lim\limits_{\mathfrak{p}\, \to\, \infty} \frac{1}{\mathfrak{p}} \sum_{j\, =\, 0}^{\mathfrak{p} - 1} \left[ \log \left( \rho^{\epsilon} \left( \sigma^{j + 1} \omega,\, T_{\pi_{j + 1} \left( \omega \right)} x \right) \right) - \log \left( \rho^{\epsilon} \left( \sigma^{j} \omega,\, T_{\pi_{j} \left( \omega \right)} x \right) \right) \right] \to \mathfrak{g}}$ for $\left( \mathfrak{B} \times \mu \right)$-almost every $\left( \omega,\, x \right) \in \Sigma_{N}^{+} \times X$, where for every $\mathfrak{q} \in \mathbb{Z}_{+}$, the limit function $\mathfrak{g}$ satisfies $\mathfrak{g} \left( \omega,\, x \right) = \mathfrak{g} \left( \sigma^{\mathfrak{q}} \omega,\, T_{\pi_{\mathfrak{q}} \left( \omega \right)} x \right)$, for $\left( \mathfrak{B} \times \mu \right)$-almost every $\left( \omega,\, x \right) \in \Sigma_{N}^{+} \times X$. Moreover, 
\begin{eqnarray*} 
& & \int \mathfrak{g} \mathrm{d} \left( \mathfrak{B} \times \mu \right) \\ 
& = & \lim_{\mathfrak{p}\, \to\, \infty} \frac{1}{\mathfrak{p}} \sum_{j\, =\, 0}^{\mathfrak{p} - 1} \left[ \int \log \left( \rho^{\epsilon} \left( \sigma^{j + 1} \omega,\, T_{\pi_{j + 1} \left( \omega \right)} x \right) \right) \mathrm{d}\mu - \int \log \left( \rho^{\epsilon} \left( \sigma^{j} \omega,\, T_{\pi_{j} \left( \omega \right)} x \right) \right) \mathrm{d} \left( \mathfrak{B} \times \mu \right) \right] \\ 
& = & 0. 
\end{eqnarray*} 

Furthermore, writing 
\begin{eqnarray*} 
& & \log \left( \rho^{\epsilon} \left( \sigma^{\mathfrak{p}} \omega,\, T_{\pi_{\mathfrak{p}} \left( \omega \right)} x \right) \right) \\ 
& = & \log \rho^{\epsilon} \left( \omega,\, x \right) + \sum_{j\, =\, 0}^{\mathfrak{p} - 1} \left[ \log \left( \rho^{\epsilon} \left( \sigma^{j + 1} \omega,\, T_{\pi_{j + 1} \left( \omega \right)} x \right) \right) - \log \left( \rho^{\epsilon} \left( \sigma^{j} \omega,\, T_{\pi_{j} \left( \omega \right)} x \right) \right) \right], 
\end{eqnarray*} 
we note that 
\begin{eqnarray*} 
& & \lim_{\mathfrak{p}\, \to\, \infty} \frac{1}{\mathfrak{p}} \log \left( \rho^{\epsilon} \left( \sigma^{\mathfrak{p}} \omega,\, T_{\pi_{\mathfrak{p}} \left( \omega \right)} x \right) \right) \\ 
& = & \lim_{\mathfrak{p}\, \to\, \infty} \frac{1}{\mathfrak{p}} \left[ \log \rho^{\epsilon} \left( \omega,\, x \right) + \sum_{j\, =\, 0}^{\mathfrak{p} - 1} \left[ \log \left( \rho^{\epsilon} \left( \sigma^{j + 1} \omega,\, T_{\pi_{j + 1} \left( \omega \right)} x \right) \right) - \log \left( \rho^{\epsilon} \left( \sigma^{j} \omega,\, T_{\pi_{j} \left( \omega \right)} x \right) \right) \right] \right] \\ 
& = & 0. 
\end{eqnarray*} 
 
Hence, for any given $\epsilon > 0$, there exists a threshold, say $N_{2} \in \mathbb{Z}_{+}$ such that for every $\mathfrak{p} > N_{2}$, we have $\displaystyle{\rho^{\epsilon} \left( \sigma^{\mathfrak{p}} \omega,\, T_{\pi_{\mathfrak{p}} \left( \omega \right)} x \right) \le e^{\mathfrak{p} \epsilon}}$. As earlier, defining 
\[ B_{\epsilon}\ \ =\ \ \max \left\{ 1, \max_{1\, \le\, k\, \le\, N_{2}} \left\{ \frac{\rho^{\epsilon} \left( \sigma^{k} \omega,\, T_{\pi_{k} \left( \omega \right)} x \right)}{e^{k \epsilon}} \right\} \right\}, \] 
we obtain $\rho^{\epsilon} \left( \sigma^{\mathfrak{p}} \omega,\, T_{\pi_{\mathfrak{p}} \left( \omega \right)} x \right) \le B_{\epsilon} e^{\mathfrak{p} \epsilon}$ for every $\mathfrak{p} \in \mathbb{Z}_{+}$. Now, using the definition of the function $\rho^{\epsilon} \left( \sigma^{\mathfrak{p}} \omega,\, T_{\pi_{\mathfrak{p}} \left( \omega \right)} x \right)$, from Equation \eqref{gsmn}, we have for every $\mathfrak{p} \in \mathbb{Z}_{+}$ and $\mathfrak{q} \in \mathbb{Z}_{+} \cup \{ 0 \}$, 
\[ \left\| L_{22} \circ T_{\pi_{\mathfrak{p} + \mathfrak{q} - 1} \left( \omega \right)} (x) \cdots L_{22} \circ T_{\pi_{\mathfrak{p}} \left( \omega \right)} (x) \right\|_{{\rm op}}\ \ \le\ \ B_{\epsilon} e^{(\mathfrak{p} + \mathfrak{q}) \epsilon + \mathfrak{q} \left( \alpha \left( \omega,\, x \right) \right)}. \] 
The particular case of $\mathfrak{p} = j + 1$ and $\mathfrak{q} = n - j - 1$ clinches the proof. 

\item[$(c)$] In order to prove this part, we need to obtain the following: Given $\epsilon > 0$, there exists a threshold, say $N_{3} \in \mathbb{Z}_{+}$ such that for every $n > N_{3}$, we have 
\[ \frac{1}{n} \log \left\| \left( L_{21} \left( T_{\pi_{n} \left( \omega \right)} x \right) \right) \right\|_{{\rm op}}\ \ \le\ \ \epsilon\ \ \text{for}\ \left( \mathfrak{B} \times \mu \right)\text{-almost every}\ \left( \omega,\, x \right) \in \Sigma_{N}^{+} \times X. \] 
Once we obtain the above mentioned threshold $N_{3}$, defining 
\[ C_{\epsilon} = \max \left\{ 1,\; \max_{1\, \le\, k\, \le\, N_{3}} \left\{ \frac{\left\| \left( L_{21} \left( T_{\pi_{k} \left( \omega \right)} x \right) \right) \right\|_{{\rm op}}}{e^{k \epsilon}} \right\} \right\}, \] 
completes the proof. Towards that end, consider the sequence of sets 
\[ \mathcal{A}_{n}\ \ =\ \ \left\{ \left( \omega,\, x \right) \in \Sigma_{N}^{+} \times X\ :\ \left| \log \left\| \left( L_{21} \left( T_{\pi_{n} \left( \omega \right)} x \right) \right) \right\|_{{\rm op}} \right|\ \ge\ n \epsilon \right\}. \] 
Then, 
\begin{eqnarray*} 
& & \sum_{n\, \ge\, 1} \left( \mathfrak{B} \times \mu \right) \left( \mathcal{A}_{n} \right) \\ 
& = & \sum_{n\, \ge\, 1} \left( \mathfrak{B} \times \mu \right) \left( \left\{ \left( \omega,\, x \right) \in \Sigma_{N}^{+} \times X\ :\ \left| \log \left\| \left( L_{21} \left( T_{\pi_{n} \left( \omega \right)} x \right) \right) \right\|_{{\rm op}} \right|\ \ge\ n \epsilon \right\} \right) \\ 
& = & \sum_{n\, \ge\, 1} \sum_{k\, \ge\, n} \mu \left( \left\{ x \in X\ :\ k \le \frac{\left| \log \left\| \left( L_{21} \left( x \right) \right) \right\|_{{\rm op}} \right|}{\epsilon} < k + 1 \right\} \right) \\ 
& = & \sum_{k\, \ge\, 1} k \mu \left( \left\{ x \in X\ :\ k \le \frac{\left| \log \left\| \left( L_{21} \left( x \right) \right) \right\|_{{\rm op}} \right|}{\epsilon} < k + 1 \right\} \right) \\ 
& < & \infty, 
\end{eqnarray*}
since $\displaystyle{\log \left\| \left( L \left( x \right) \right) \right\|_{{\rm op}}}$ is $\mu$-integrable, and thus, $\displaystyle{\log \left\| \left( L_{21} \left( x \right) \right) \right\|_{{\rm op}}}$ is $\mu$-integrable. 

Then, by an application of the Borel-Cantelli lemma, as one may find in \cite{parthasarathy:1977}, we have $\displaystyle{\mu \left( \limsup_{n\, \to\, \infty} \mathcal{A}_{n} \right) = 0}$ and hence, 
\[ \lim_{n\, \to\, \infty} \frac{1}{n} \log \left\| \left( L_{21} \left( T_{\pi_{n} \left( \omega \right)} x \right) \right) \right\|_{{\rm op}}\ \ =\ \ 0\ \ \text{for}\ \left( \mathfrak{B} \times \mu \right)\text{-almost every}\ \left( \omega,\, x \right) \in \Sigma_{N}^{+} \times X, \] 
proving the existence of the threshold $N_{3}$ for any given $\epsilon > 0$, as required. 
\end{enumerate} 
\end{proof} 

\section{Proof of Part $C$ of Theorem \ref{thm:oselomega}} 
\label{partcomega} 

Having gathered sufficient tools, we now proceed to prove part $C$ of Theorem \ref{thm:oselomega} in this section. 

\begin{proof}\big(of part $C$ of Theorem \ref{thm:oselomega}\big) 
We start with the base case, namely $k = 1$, by considering the vector subspace $V^{1}_{\left( \omega,\, x \right)} \subset \mathbb{R}^{d}$. We know from parts $A$ and $B$ of Theorem \ref{thm:oselomega} that the map $\left( \omega,\, x \right) \longmapsto V^{1}_{\left( \omega,\, x \right)}$ is measurable and that $V^{1}_{\left( \omega,\, x \right)}$ is an $\sigma$-invariant sub-bundle. Thus, 
\begin{eqnarray*} 
\lim_{n\, \to\, \infty} \frac{1}{n} \log \left\| \prod\limits_{j\, =\, 0}^{n - 1} \left. \left( L \left( T_{\pi_{j} \left( \omega \right)} x \right) \right)^{-1} \right|_{V^{1}_{\left( \omega,\, x \right)}} \right\|_{{\rm op}}^{-1} & = &  \min \left\{ \lambda \left( \omega,\, x,\, v \right)\ :\ v \in V^{1}_{\left( \omega,\, x \right)} \setminus V^{0}_{\left( \omega,\, x \right)} \right\} \\ 
& & \hspace{+4.4cm} \text{(from Proposition \ref{prop:usein10})} \\ 
& = & \lambda_{1} \left( \omega,\, x \right) \hspace{+2.8cm} \text{(from Equation \eqref{eqn:lam})} \\ 
& = & \max \left\{ \lambda \left( \omega,\, x,\, v \right)\ :\ v \in V^{1}_{\left( \omega,\, x \right)} \setminus V^{0}_{\left( \omega,\, x \right)} \right\}. \\ 
& & \hspace{+4.5cm} \text{(from Equation \eqref{eqn:lam})}
\end{eqnarray*} 
An application of Lemma \ref{lambdapmbound} then yields 
\[ \lim_{n\, \to\, \infty} \frac{1}{n} \log \left\| \prod\limits_{j\, =\, 0}^{n - 1} L \left( T_{\pi_{j} \left( \omega \right)} x \right) v \right\|\ \ =\ \ \lambda_{1} \left( \omega,\, x \right)\ \ \ \ \forall v \in V^{1}_{\left( \omega,\, x \right)} \setminus V^{0}_{\left( \omega,\, x \right)}. \] 

We now write the proof for the remainder of the cases, namely $2 \le k \le r$. Define $\left( \mathfrak{B} \times \mu \right)$-measurable functions $\alpha^{(k)}$ and $\beta^{(k)}$ on $\Sigma_{N}^{+} \times X$ given by $\alpha^{(k)} \left( \omega,\, x \right) \equiv \lambda_{k - 1} \left( \omega,\, x \right)$ and $\beta^{(k)} \left( \omega,\, x \right) \equiv \lambda_{k} \left( \omega,\, x \right)$. Then, the functions $\alpha^{(k)}$ and $\beta^{(k)}$ satisfy the conditions mentioned in the proof of part $1$ of Lemma \ref{lem:twoparts}, namely 
\begin{enumerate} 
\item[(a)] $\alpha^{(k)}$ and $\beta^{(k)}$ are $\left( \mathfrak{B} \times \mu \right)$-integrable functions on $\Sigma_{N}^{+} \times X$, 
\item[(b)] $\alpha^{(k)} < \beta^{(k)}$ on $\Sigma_{N}^{+} \times X$, 
\item[(c)] $\alpha^{(k)} \left( \sigma^{j} \omega,\, T_{\pi_{j} \left( \omega \right)} x \right) = \alpha^{(k)} \left( \omega,\, x \right)$ and $\beta^{(k)} \left( \sigma^{j} \omega,\, T_{\pi_{j} \left( \omega \right)} x \right) = \beta^{(k)} \left( \omega,\, x \right)$ for all $j \in \mathbb{Z}_{+}$ and 
\item[(d)] $\lambda \left( \omega,\, x,\, v \right) \le \alpha^{(k)} \left( \omega,\, x \right)$ for every $v \in V^{k - 1}_{\left( \omega,\, x \right)} \setminus V^{0}_{\left( \omega,\, x \right)}$ and $\lambda \left( \omega,\, x,\, u \right) \ge \beta^{(k)} \left( \omega,\, x \right)$ for every $u \in \mathbb{R}^{d} \setminus V^{k - 1}_{\left( \omega,\, x \right)}$. 
\end{enumerate} 
Suppose $\dim \left( V^{k - 1}_{\left( \omega,\, x \right)} \right) = d_{k - 1}$, then one may consider $\left( V^{k - 1}_{\left( \omega,\, x \right)} \right)^{\perp}$ to be isometrically isomorphic to $\mathbb{R}^{d - d_{k - 1}}$. Thus, the decomposition of $L(x)$, as one may infer from Equation \eqref{eqn:decompL}, looks like 
\[ L (x)\ \ =\ \ \begin{pmatrix} L_{11}^{(k - 1)} (x) & 0 \\ L_{21}^{(k - 1)} (x) & L_{22}^{(k - 1)} (x) \end{pmatrix},\ \ \ \text{where}\ \ L_{11}^{(k - 1)} (x) \in {\rm GL}_{d - d_{k - 1}} \left( \mathbb{R} \right). \] 
Now, define vector subspaces $W^{k - 1}_{\left( \omega,\, x \right)} = V^{k - 1}_{\left( \omega,\, x \right)} \cap \left( V^{k - 1}_{\left( \omega,\, x \right)} \right)^{\perp} = V^{0}_{\left( \omega,\, x \right)}$ and $W^{k}_{\left( \omega,\, x \right)} = V^{k}_{\left( \omega,\, x \right)} \cap \left( V^{k - 1}_{\left( \omega,\, x \right)} \right)^{\perp}$. Then, one can see that $W^{k}_{\left( \omega,\, x \right)} \setminus W^{k - 1}_{\left( \omega,\, x \right)} = \left( V^{k}_{\left( \omega,\, x \right)} \setminus V^{k - 1}_{\left( \omega,\, x \right)} \right) \cap \left( V^{k - 1}_{\left( \omega,\, x \right)} \right)^{\perp}$. Since the maps $\left( \omega,\, x \right) \longmapsto V^{i}_{\left( \omega,\, x \right)}$ is measurable with $V^{i}_{\left( \omega,\, x \right)}$ being $\sigma$-invariant sub-bundles for $ i = k - 1$ and $k$, we also have the map $\left( \omega,\, x \right) \longmapsto W^{k}_{\left( \omega,\, x \right)}$ to be measurable with $W^{k}_{\left( \omega,\, x \right)}$ being an $\sigma$-invariant sub-bundle. Hence, using the idea employed in the base case, however, for the vector space $W^{k}_{\left( \omega,\, x \right)}$, instead of $V^{1}_{\left( \omega,\, x \right)}$ and for the map $L_{11}^{(k - 1)} (x)$ instead of $L(x)$, we obtain 
\begin{eqnarray*} 
\lim_{n\, \to\, \infty} \frac{1}{n} \log \left\| \prod\limits_{j\, =\, 0}^{n - 1} \left. \left( L_{11}^{(k - 1)} \left( T_{\pi_{j} \left( \omega \right)} x \right) \right)^{-1} \right|_{W^{k}_{\left( \omega,\, x \right)}} \right\|_{{\rm op}}^{-1} & = &  \min \left\{ \lambda \left( \omega,\, x,\, u \right)\ :\ u \in W^{k}_{\left( \omega,\, x \right)} \setminus W^{k - 1}_{\left( \omega,\, x \right)} \right\} \\ 
& & \hspace{+3.1cm} \text{(from Proposition \ref{prop:usein10})} \\ 
& = & \lambda_{k} \left( \omega,\, x \right) \hspace{+1.6cm} \text{(from Equation \eqref{eqn:lam})} \\ 
& = & \max \left\{ \lambda \left( \omega,\, x,\, u \right)\ :\ u \in W^{k}_{\left( \omega,\, x \right)} \setminus W^{k - 1}_{\left( \omega,\, x \right)} \right\} \\ 
& & \hspace{+3.2cm} \text{(from Equation \eqref{eqn:lam})}. 
\end{eqnarray*} 
As earlier, an application of Lemma \ref{lambdapmbound} then yields 
\[ \lim_{n\, \to\, \infty} \frac{1}{n} \log \left\| \prod\limits_{j\, =\, 0}^{n - 1} L_{11}^{(k - 1)} \left( T_{\pi_{j} \left( \omega \right)} x \right) u \right\|\ \ =\ \ \lambda_{k} \left( \omega,\, x \right)\ \ \ \ \forall u \in W^{k}_{\left( \omega,\, x \right)} \setminus W^{k - 1}_{\left( \omega,\, x \right)}. \] 
Since every vector $w \in V^{k}_{\left( \omega,\, x \right)} \setminus V^{k - 1}_{\left( \omega,\, x \right)}$ can be expressed as $w = u + v$ where $u \in W^{k}_{\left( \omega,\, x \right)} \setminus W^{k - 1}_{\left( \omega,\, x \right)} \subseteq \left( V^{k - 1}_{\left( \omega,\, x \right)} \right)^{\perp}$ and $v \in V^{k - 1}_{\left( \omega,\, x \right)}$, we make use of Lemma \ref{lem:twoparts}, to write 
\[ \lim_{n\, \to\, \infty} \frac{1}{n} \log \left\| \prod\limits_{j\, =\, 0}^{n - 1} L \left( T_{\pi_{j} \left( \omega \right)} x \right) w \right\|\ \ =\ \ \lim_{n\, \to\, \infty} \frac{1}{n} \log \left\| \prod\limits_{j\, =\, 0}^{n - 1} L_{11}^{(k - 1)} \left( T_{\pi_{j} \left( \omega \right)} x \right) u \right\|\ \ =\ \ \lambda_{k} \left( \omega,\, x \right). \] 
Thus, the proof of part $C$ of Theorem \ref{thm:oselomega} is complete. 
\end{proof} 

\section{Proof of Theorem \ref{thm:oselnoomega}: Parts $A$ and $B$} 
\label{sec:potnoomegaab} 

The ideas to prove Theorem \ref{thm:oselnoomega} follow the lines of the proof of Theorem \ref{thm:oselomega} and can appear like a repetition of sorts from Section \ref{sec:afsl} onwards, for a layman. However, upon closer inspection, one may observe that in the arguments presented henceforth, the quantities defined and results obtained pertain to the evolution of orbits in the dynamical system by simultaneously employing the considered collection of maps $\left\{ T_{1}, T_{2}, \cdots, T_{N} \right\}$ and are therefore, independent of $\omega \in \Sigma_{N}^{+}$. Furthermore, the evolution of every orbit is set-valued here, as explained in Equation \eqref{actionofR}. 

We now recall the definition of the quantity $\lambda \left( \omega,\, x,\, v \right)$, as defined in Equation \eqref{eqn:loxv}, for some fixed $\omega \in \Sigma_{N}^{+}$, for $\mu$-almost every $x \in X$ and $v \in \mathbb{R}^{d}$. 
\[ \lambda \left( \omega,\, x,\, v \right)\ \ =\ \ \limsup_{n\, \to\, \infty} \frac{1}{n} \log \left\| \prod_{j\, =\, 0}^{n - 1} \left( L \circ T_{\pi_{j} \left( \omega \right)} x \right) v \right\|. \] 
As earlier, let $\omega^{n} = \pi_{n} \left( \omega \right) \in \Sigma_{N}^{n}$ satisfy the condition that the sequence of infinite concatenation of $\omega^{n}$ with itself denoted by $\displaystyle{\left\{ \overline{\omega^{n}} \right\}_{n\, \in\, \mathbb{Z}_{+}}}$ converges to the considered $\omega$ in $\Sigma_{N}^{+}$. Then, it is trivial to observe that $\displaystyle{\frac{1}{n} \log \left\| \prod_{j\, =\, 0}^{n - 1} \left( L \circ T_{\pi_{j} \left( \omega \right)} x \right) v \right\| = \frac{1}{n} \log \left\| \prod_{j\, =\, 0}^{n - 1} \left( L \circ T_{\pi_{j} \left( \omega^{n} \right)} x \right) v \right\|}$. Thus, defining 
\[ \lambda_{n} \left( \omega^{n},\, x,\, v \right)\ \ =\ \ \frac{1}{n} \log \left\| \prod_{j\, =\, 0}^{n - 1} \left( L \circ T_{\pi_{j} \left( \omega^{n} \right)} x \right) v \right\|, \] 
we note that $\lambda \left( \omega,\, x,\, v \right) = \limsup\limits_{n\, \to\, \infty} \lambda_{n} \left( \omega^{n},\, x,\, v \right)$. 

For any $E \subsetneq X$ with $\# E < \infty$ and for any $F \subsetneq \mathbb{R}^{d}$ with $\# F < \infty$, we define a quantity called $\Lambda \left( E,\, F \right)$, given by 
\begin{equation} 
\label{eqn:Lxv} 
\Lambda \left( E,\, F \right)\ \ =\ \ \begin{cases} \limsup\limits_{n\, \to\, \infty} \dfrac{1}{N^{n}} \dfrac{1}{\# E} \mathlarger{\sum}\limits_{\omega^{n}\, \in\, \Sigma_{N}^{n}}\ \mathlarger{\sum}\limits_{x\, \in\, E}\ \lambda_{n} \left( \omega^{n},\, x,\, \left( \sum\limits_{v\, \in\, F} v \right) \right) & \text{when}\ \sum\limits_{v\, \in\, F} v \ne 0 \\ 
- \infty & \text{otherwise}. \end{cases} 
\end{equation} 

Suppose $F_{1}$ and $F_{2}$ are finite subsets of $\mathbb{R}^{d}_{*}$, we define the set 
\[ F_{1} + F_{2}\ \ =\ \ \left\{ v_{1} + v_{2} : v_{1} \in F_{1},\ v_{2} \in F_{2} \right\}\ \ \ \ \text{and}\ \ \ \ c F_{1}\ \ =\ \ \left\{ c v_{1} : v_{1} \in F_{1} \right\}\ \ \ \ \text{for some}\ c \in \mathbb{R}. \] 
Then, one can easily verify that 
\begin{eqnarray} 
\label{Lamdav1v2} 
\Lambda \left( E,\, F_{1} + F_{2} \right) & \le & \max \left\{ \Lambda \left( E,\, F_{1} \right),\; \Lambda \left( E,\, F_{2} \right) \right\}\ \ \ \ \text{and} \nonumber \\ 
\Lambda \left( E,\, c F_{1} \right) & = & \Lambda \left( E,\, F_{1} \right)\ \text{for every}\ c \ne 0 \in \mathbb{R}. 
\end{eqnarray} 

Since for any vector $v \in \mathbb{R}^{d}$, we have 
\[ \| v \|\ \ \le\ \ \prod_{\omega^{n}\, \in\, \Sigma_{N}^{n}} \left[ \left\| \left( \prod_{j\, =\, 0}^{n - 1} \left( L \left( T_{\pi_{j} \left( \omega^{n} \right)} x \right) \right) \right)^{-1} \right\|_{{\rm op}} \left\| \prod_{j\, =\, 0}^{n - 1} L \left( T_{\pi_{j} \left( \omega^{n} \right)} x \right) \right\|_{{\rm op}} \right] \| v \|, \]
one obtains $\Lambda_{-} (x) \le \Lambda \left( E,\, F \right) \le \Lambda_{+} (x)$, where $E = \left\{ x \right\}$ for some $\mu$-typical point $x \in X$ and $F = \left\{ v \right\} \subsetneq \mathbb{R}^{d}_{*}$. Here $\Lambda_{-} (x)$ and $\Lambda_{+} (x)$ are the extremal Lyapunov exponents, as defined in Theorem \ref{thm:fkomegafree}. 

For some $\mu$-typical point $x \in X$, fix $E_{0} = \{ x \}$ and define a function $g_{0} : \mathbb{R}^{d} \longrightarrow \mathbb{R} \cup \{ - \infty \}$ given by $g_{0} (v) = \Lambda \left( E_{0},\, F_{-1} \right)$, where $F_{-1} = \{ v \}$. Then, one may note that $g_{0}$ satisfies the hypothesis of Lemma \ref{fonrd} for $\mu$-almost every $x \in X$ and hence, $g_{0}$ attains at most $d$ distinct values in $\mathbb{R}$. Suppose the cardinality of the set $U_{E_{0}} = \left\{ \Lambda \left( E_{0},\, F_{-1} \right)\ :\ F_{-1} = \{ v \} \subsetneq \mathbb{R}^{d}_{*} \right\}$ is $r = r \left( E_{0} \right)$, we arrange the elements in $U_{E_{0}}$ in increasing order as $\Lambda_{1} \left( E_{0} \right) < \Lambda_{2} \left( E_{0} \right) < \cdots < \Lambda_{r} \left( E_{0} \right)$ and define for every $1 \le k \le r$, the set 
\begin{equation} 
\label{nestedvs1} 
V_{E_{0}}^{k}\ \ =\ \ \left\{ v \in \mathbb{R}^{d}_{*}\ :\ \Lambda \left( E_{0},\, \{ v \} \right)\ \le\ \Lambda_{k} \left( E_{0} \right) \right\}\ \cup\ \{ 0 \}. 
\end{equation} 
Then, owing to the two Equations in \eqref{Lamdav1v2}, we infer that $V_{E_{0}}^{k}$ is a nested collection of vector subspaces of $\mathbb{R}^{d}$. Hence, defining $V_{E_{0}}^{0}$ to be the trivial subspace of $\mathbb{R}^{d}$, we have 
\[ \left\{ 0 \right\}\ \ =\ \ V_{E_{0}}^{0}\ \ \subsetneq\ \ V_{E_{0}}^{1}\ \ \subsetneq\ \ V_{E_{0}}^{2}\ \ \subsetneq\ \ \cdots\ \ \subsetneq\ \ V_{E_{0}}^{k}\ \ \equiv\ \ \mathbb{R}^{d}. \]
Then, our definition of the collection of vector subspaces $\left\{ V^{k}_{E_{0}} \right\}$ for $0 \le k \le r$ and our ordering of the numbers $\left\{ \Lambda_{1} \left( E_{0} \right), \cdots, \Lambda_{r} \left( E_{0} \right) \right\}$ ensures that 
\begin{equation} 
\label{eqn:Lam} 
\Lambda \left( E_{0},\, \{ v \} \right)\ \ =\ \ \Lambda_{k} \left( E_{0} \right)\ \ \ \ \forall v \in V^{k}_{E_{0}} \setminus V^{k -1}_{E_{0}}, 
\end{equation} 
as otherwise, it would result in an increase in the cardinality of $U_{E_{0}}$. 

Given some $\mu$-typical point $x \in X$, fix $E_{0} = \{ x \}$ and let $\left\{ E_{n} \right\}_{n\, \in\, \mathbb{Z}_{+}}$ be the sequence of subsets of $X$, as given in Theorem \ref{thm:oselnoomega}, defined by $\displaystyle{E_{n} = \bigcup\limits_{\omega^{n}\, \in\, \Sigma_{N}^{n}} T_{\omega^{n}} x}$. Note that every set $E_{n}$ in the sequence can be rewritten as $\displaystyle{E_{n} = \bigcup\limits_{\omega_{n}\, =\, 1}^{N} \bigcup_{y\, \in\, E_{n - 1}} T_{\omega_{n}} y}$. Pertaining to this $E_{0}$, given some vector $v \in \mathbb{R}^{d}$, consider $F_{-1} = \{ v \}$ and define a sequence of subsets of $\mathbb{R}^{d}$ denoted by $\left\{ F_{n} \right\}_{n\, \in\, \mathbb{Z}_{+}\; \cup\; \{0 \}}$ given by $\displaystyle{F_{n} = \bigcup\limits_{y\, \in\, E_{n}} \bigcup\limits_{w\, \in\, F_{n - 1}} L(y) w}$, where the vectors are repeated in each of the sets $F_{n}$, if necessary. Since the sets $E_{0}$ and $F_{-1}$ are of finite cardinality, we note that every set in the sequence $\left\{ E_{n} \right\}_{n\, \in\, \mathbb{Z}_{+}}$ and $\left\{ F_{n} \right\}_{n\, \in\, \mathbb{Z}_{+}\; \cup\; \{ 0 \}}$ are also of finite cardinality. We fix this notation for the sequence of subsets $\left\{ E_{n} \right\}$ of $X$ and $\left\{ F_{n} \right\}$ of $\mathbb{R}^{d}$ constructed from $E_{0} = \{ x \}$ and $F_{-1} = \{ v \}$, for the remainder of this manuscript. With this notation of sets, one can see that 
\begin{equation}
\label{Lambdaenfn}
\Lambda \left( E_{n},\, F_{n - 1} \right)\ \ \equiv\ \ \Lambda \left( E_{0},\, F_{-1} \right)\ \ \ \ \text{for every}\ \ n \in \mathbb{Z}_{+}. 
\end{equation} 

We now define a function $g_{n} : \mathbb{R}^{d} \longrightarrow \mathbb{R} \cup \{ - \infty \}$ given by $g_{n} (v) = \Lambda \left( E_{n},\, F_{n - 1} \right)$, where $F_{n - 1}$ is obtained by taking $F_{-1} = \{ v \}$. Then, it is true that $g_{n}$ satisfies the hypothesis of Lemma \ref{fonrd} for $\mu$-almost every $x \in X$ with $E_{0} = \{ x \}$ and hence, $g_{n}$ attains at most $d$ distinct values in $\mathbb{R}$. Thus, the set $\displaystyle{U_{E_{n}} = \left\{ \Lambda \left( E_{n},\, F_{n - 1} \right) \right\} = U_{E_{0}}}$, by Equation \eqref{Lambdaenfn}. Arranging the members of $U_{E_{n}}$ in ascending order, we have 
\begin{equation} 
\label{LamdakEnandrEn} 
\Lambda_{k} \left( E_{n} \right)\ \ \equiv\ \ \Lambda_{k} \left( E_{0} \right)\ \ \ \ \text{for every}\ \ n \in \mathbb{Z}_{+}\ \ \ \ \ \text{and hence,}\ \ \ \ \ r \left( E_{n} \right)\ \ \equiv\ \ r \left( E_{0} \right)\ \ \ \ \forall n \in \mathbb{Z}_{+}, 
\end{equation} 
where $r \left( E_{n} \right)$ denotes the cardinality of the set $U_{E_{n}}$. 

Using Equations \eqref{Lambdaenfn} and \eqref{LamdakEnandrEn} in the expression of the set $\displaystyle{\sum_{\omega^{l}\, \in\, \Sigma_{N}^{l}} L \left( T_{\omega^{l}} x \right) V_{E_{l}}^{k}}$ and noting that $\displaystyle{\sum\limits_{w\, \in\, F_{l}} w = \sum_{\omega^{l}\, \in\, \Sigma_{N}^{l}} L \left( T_{\omega^{l}} x \right) u}$, where $u \in F_{l - 1}$, we obtain 
\[ \sum_{\omega^{l}\, \in\, \Sigma_{N}^{l}} L \left( T_{\omega^{l}} x \right) V_{E_{l}}^{k}\ \ =\ \ V_{E_{l + 1}}^{k}, \] 
thereby completing the proof of all statements in part $A$ of Theorem \ref{thm:oselnoomega}. 

In order to prove part $B$ of Theorem \ref{thm:oselnoomega}, we now state a theorem analogous to Theorem \ref{equivalent:3} by getting rid of $\omega$ and the space $\Sigma_{N}^{+}$. Alternatively, this can be obtained as a direct consequence of Theorem \ref{equivalent}, as one may find in \cite{pw:1993}. 

\begin{theorem} 
\label{equivalent:4} 
Let $X$ be a complete probability space. Let ${\rm Gr}(d)$ denote the Grassmannian on $\mathbb{R}^{d}$. Let $\mathcal{H} : X \longrightarrow {\rm Gr}(d)$. Then, the following statements are equivalent. 
\begin{enumerate} 
\item The map $\mathcal{H}$ given by $\mathcal{H} (x) = V_{E_{0}}$ where $E_{0} = \{ x \}$ is measurable. 
\item The set $\Gamma_{\mathcal{H}} = \left\{ \left( x,\, v \right) \in X \times \mathbb{R}^{d} : v \in V_{E_{0}}\ \text{where}\ E_{0} = \{ x \} \right\}$ belongs to the product sigma-algebra $\mathcal{B}_{X} \times \mathcal{B}_{\mathbb{R}^{d}}$ of the product space $X \times \mathbb{R}^{d}$. 
\item For every $1 \le l \le d$, the set $\mathcal{X}_{l} = \left\{ x \in X : \dim \left(V_{\{ x \}}\right) = l \right\}$ belongs to $\mathcal{B}_{X}$. Moreover, there exists measurable vector fields $u_{i} : \mathcal{X}_{l} \longrightarrow \mathbb{R}^{d}$ for every $1 \le i \le l$ such that the collection $\left\{ u_{1} (x), u_{2} (x), \cdots, u_{l} (x) \right\}$ is a basis of the vector space $\mathcal{H} (x)$ for every $x \in \mathcal{X}_{l}$. 
\end{enumerate} 
\end{theorem}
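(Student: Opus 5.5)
The plan is to obtain Theorem \ref{equivalent:4} as a specialisation of Theorem \ref{equivalent}, with $\mathscr{P} = X$ (completed with respect to $\mu$) and $\mathscr{S} = \mathbb{R}^{d}$. The one wrinkle is that $V_{E_{0}}$ is a non-compact closed set, so we shall work with the compact sets $K_{x} = V_{\{x\}} \cap \overline{B}(0,1)$, or equivalently $V_{\{x\}} \cap \mathbb{S}^{d-1}$ in the metric $d_{{\rm Gr}}$. The correspondence $V \longmapsto V \cap \mathbb{S}^{d-1}$ is an isometric embedding of $\left( {\rm Gr}(d),\, d_{{\rm Gr}} \right)$ into $\left( 2^{\mathbb{R}^{d}}_{{\rm cpt}},\, d_{H} \right)$ by the very definition of $d_{{\rm Gr}}$. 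Hence measurability of $\mathcal{H}$ is the same as measurability of $x \longmapsto K_{x}$. With this, the argument follows the pattern already used for Theorem \ref{equivalent:3}, with $\Sigma_{N}^{+} \times X$ replaced by $X$.

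\textbf{(1) $\Leftrightarrow$ (2).} By Theorem \ref{equivalent}, $x \mapsto K_{x}$ is measurable if and only if $\Gamma_{K} = \{(x,v) : v \in K_{x}\}$ lies in $\mathcal{B}_{X} \times \mathcal{B}_{\mathbb{R}^{d}}$. We then relate $\Gamma_{K}$ to $\Gamma_{\mathcal{H}}$ using the scaling map $\Phi(x,v) = (x, v/\max\{1,\|v\|\})$, which is measurable. Since $V_{\{x\}}$ is a subspace, $(x,v) \in \Gamma_{\mathcal{H}}$ holds exactly when $\Phi(x,v) \in \Gamma_{K}$, so $\Gamma_{\mathcal{H}} = \Phi^{-1}(\Gamma_{K})$. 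Conversely, $\Gamma_{K} = \Gamma_{\mathcal{H}} \cap (X \times \overline{B}(0,1))$. So each graph is measurable if and only if the other is.

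\textbf{(1)/(2) $\Rightarrow$ (3).} The map ${\rm Gr}(d) \ni V \mapsto \dim V$ is locally constant, since the components ${\rm Gr}(l,d)$ are at positive $d_{H}$-distance from one another. Therefore $\mathcal{X}_{l} = \mathcal{H}^{-1}({\rm Gr}(l,d))$ is measurable. To build a basis we proceed inductively.
\begin{itemize}
\item[(i)] Apply the selection part of Theorem \ref{equivalent} to $x \mapsto V_{\{x\}} \cap \mathbb{S}^{d-1}$ to get a measurable unit vector $u_{1}(x) \in V_{\{x\}}$.
\item[(ii)] Given $u_{1}, \ldots, u_{i}$ with $i < l$, consider $x \mapsto V_{\{x\}} \cap \{u_{1}(x), \ldots, u_{i}(x)\}^{\perp} \cap \mathbb{S}^{d-1}$ on $\mathcal{X}_{l}$. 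This set is nonempty because $i < l$. Its graph is the intersection of $\Gamma_{\mathcal{H}}$ with sets cut out by the measurable functions $\langle v, u_{m}(x) \rangle$ and $\|v\|$, so it is measurable. Selecting again yields $u_{i+1}$.
\end{itemize}
This produces a measurable orthonormal basis.

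\textbf{(3) $\Rightarrow$ (2).} On each $\mathcal{X}_{l}$, a vector $v$ lies in $V_{\{x\}}$ if and only if $\bigl\| v - \sum_{i} \langle v, \tilde u_{i}(x) \rangle \tilde u_{i}(x) \bigr\| = 0$. Here $\tilde u_{i}$ denotes the Gram--Schmidt orthonormalisation of the $u_{i}$, which is measurable because it is a rational expression in inner products. The left-hand side is a measurable function of $(x,v)$, so $\Gamma_{\mathcal{H}} \cap (\mathcal{X}_{l} \times \mathbb{R}^{d})$ is measurable. Taking the finite union over $l$ gives (2).

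The main obstacle is the step (2) $\Rightarrow$ (3). It requires the graph-to-selection direction of Theorem \ref{equivalent}, hence the completeness of $(X,\mu)$, and a careful check that each restricted graph in the induction is still product-measurable. If that check becomes awkward, the fallback is to select $u_{i+1}$ from $\Gamma_{\mathcal{H}}$ through the projection result of Lemma \ref{lem:projmeas}.
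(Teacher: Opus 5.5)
Your proposal is correct and follows the same route as the paper. The paper gives no argument beyond asserting that Theorem \ref{equivalent:4} is a direct consequence of Theorem \ref{equivalent} (citing \cite{pw:1993}), and your reduction supplies exactly the omitted details: the embedding $V \mapsto V \cap \mathbb{S}^{d-1}$, the comparison of graphs via rescaling, and the iterated measurable selections. The only cosmetic point is that you switch between $V \cap \overline{B}(0,1)$ and $V \cap \mathbb{S}^{d-1}$. This is harmless, since both correspondences are homeomorphisms of ${\rm Gr}(d)$ onto their images in $2^{\mathbb{R}^{d}}_{{\rm cpt}}$.
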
 

We now prove that the maps $x \longmapsto r \left( E_{0} \right),\ x \longmapsto \Lambda_{k} \left( E_{0} \right)$ and $x \longmapsto V_{E_{0}}^{k}$ are measurable for every $1 \le k \le r$ where $E_{0} = \left\{ x \right\}$, by induction. Consider the case $k = r$. Then, for $- \infty < a \le 1$ we have $r^{-1} \left( [a,\, \infty) \right) = X$. Moreover, for $\mu$-almost every $x \in X$, we know from Equation \eqref{nestedvs1} that $V_{E_{0}}^{r} = \mathbb{R}^{d}$. Suppose $\mathcal{X}_{l} = \left\{ x \in X : \dim \left( V_{E_{0}}^{r} \right) = l \right\}$, then, $\mathcal{X}_{l} = \emptyset$ for all $1 \le l < d$. Let $\left\{ e_{1}, e_{2}, \cdots, e_{d} \right\}$ be a basis of $\mathbb{R}^{d}$. Define a vector field $v_{i} : \mathcal{X}_{d} \longrightarrow \mathbb{R}^{d}$ as $v_{i} (x) = e_{i}$. Then, by Theorem \ref{equivalent:4}, the map $x \longrightarrow V_{E_{0}}^{r}$ is measurable. 

Suppose that the basis vectors of $\mathbb{R}^{d}$ are written in such an order that the collection of vectors $\left\{ e_{1},\, e_{2}, \cdots, e_{k} \right\}$ spans the vector subspace $V_{E_{0}}^{k}$. Then, for any $v \in V_{E_{0}}^{k} \setminus V_{E_{0}}^{k - 1}$, we have $\Lambda \left( E_{0},\, \left\{ v \right\} \right) = \Lambda_{k} \left( E_{0} \right)$. This implies $\displaystyle{\max \left\{ \Lambda \left( E_{0},\, \left\{ e_{i} \right\} \right) : 1 \le i \le k \right\} = \Lambda_{k} \left( E_{0} \right)}$ for every $1 \le k \le r$. Thus, the map $x \longmapsto \Lambda_{k} \left( E_{0} \right)$ is measurable for every $1 \le k \le r$, where $E_{0} = \left\{ x \right\}$. 

Further, the set $\left\{ (x,\, v) \in X \times \mathbb{R}^{d} : \Lambda \left( \left\{ x \right\},\, \left\{ v \right\} \right) < \Lambda_{k} \left( \left\{ x \right\} \right) \right\}$ is a measurable subset of $X \times \mathbb{R}^{d}$. Then, using Lemma \ref{lem:projmeas}, we have that \[ \big\{ x \in X : \Lambda \left( \left\{ x \right\},\, \left\{ v \right\} \right) < \Lambda_{r} \left( \left\{ x \right\} \right) \big\} = \big\{ x \in X : r \left( \left\{ x \right\} \right) \ge 2 \big\} \] 
is a measurable subset of $X$. Moreover, 
\begin{eqnarray*} 
& & \left\{ (x,\, v) \in X \times \mathbb{R}^{d}\ :\ v \in V_{\left\{ x \right\}}^{r - 1} \right\} \\ 
& = & \left\{ (x,\, v) \in X \times \mathbb{R}^{d}\ :\ \Lambda \left( \left\{ x \right\},\, \left\{ v \right\} \right) \le \Lambda_{r - 1} \left( \left\{ x \right\} \right) \right\}\ \cup\ \left( X \times \{ 0 \} \right) 
\end{eqnarray*} 
is a measurable subset of $X \times \mathbb{R}^{d}$. Thus, applying Lemma \ref{equivalent:4}, we obtain the map $x \longmapsto V_{E_{0}}^{r - 1}$ is measurable, where $E_{0} = \left\{ x \right\}$. The proof of part $B$ of Theorem \ref{thm:oselnoomega} is then concluded by appealing to an inductive argument on $r$, that decreases over the set of positive integers. 

\section{Proof of Part $C$ of Theorem \ref{thm:oselnoomega}} 
\label{sec:potnoomegac} 

Since the ideas of the proof of part $C$ of Theorem \ref{thm:oselnoomega} follows the same as the proof of part $C$ of Theorem \ref{thm:oselomega}, we compress the arguments made in Sections \ref{markov}, \ref{invariantmsb}, \ref{action} and \ref{partcomega} as necessary and complete the proof in this section. 

Denote by $\mathcal{M}' \left( X \times \mathscr{K} \right)$ the space of all real-valued measurable functions such that for $\mu$-almost every $x \in X$, we have $\Phi \left( x, \cdot \right) \in \mathcal{C} (\mathscr{K})$ and the map $x \longmapsto \left\| \Phi \left( x, \cdot \right) \right\|_{\infty}$ is $\mu$-integrable, where we recall from Section \ref{markov} that $\mathscr{K}$ is a compact metric space. Define a norm $\| \cdot \|_{\mathcal{M}'}$ on $\mathcal{M}' \left( X \times \mathscr{K} \right)$ given by $\displaystyle{ \left\| \Phi \right\|_{\mathcal{M'}} = \int \left\| \Phi \left( x, \cdot \right) \right\|_{\infty} \mathrm{d} \mu}$. Then, one can verify that $\mathcal{M}' \left( X \times \mathscr{K} \right)$ is complete with respect to $\left\| \cdot \right\|_{\mathcal{M}'}$. Let $\mathbb{M}' \left( \mu \right)$ be the set of all probability measures $\nu'$ supported on $X \times \mathscr{K}$ such that ${\rm proj}_{*} \nu' = \nu' \circ {\rm proj}^{-1} = \mu$, where ${\rm proj} : X \times \mathscr{K} \longrightarrow X$ is the canonical projection map. Then, by the Banach-Alaoglu theorem, we obtain $\mathbb{M}' \left( \mu \right)$ to be a compact metrizable space.

\begin{proposition}
\label{Ginvariantmeasure} 
Consider a simultaneous dynamical system $R$ defined using finitely many maps $T_{1}, T_{2}, \cdots, T_{N}$ on subsets of $X \times \mathscr{K}$ given by $\displaystyle{R \left( E,\, F \right) = \bigcup_{\omega_{1}\, =\, 1}^{N} \bigcup_{x\, \in\, E} \bigcup_{v\, \in\, F} \big\{ \left( T_{\omega_{1}} x,\, \mathcal{R}_{x} v \right) \big\}}$, where $\mathcal{R}_{x} : \mathcal{C} \left( \mathscr{K} \right) \longrightarrow \mathcal{C} \left( \mathscr{K} \right)$ is a continuous function for $\mu$-almost every $x \in X$. Fix a $\mu$-typical point $x\in X$ and consider $E_{0} = \{ x \}$. Let $\{ E_{n} \}$ be a sequence of subsets of $X$, as defined in Theorem \ref{thm:oselnoomega}. Then, given any $\Phi \in \mathcal{M}' \left( X \times \mathscr{K} \right)$, the real sequences 
\[ \left\{ \frac{1}{n} \frac{1}{N^{n}} \left( \Phi_{n} \right)_{*} (x) \right\}_{n\, \in\, \mathbb{Z}_{+}}\ \ \ \ \text{and}\ \ \ \ \left\{ \frac{1}{n} \frac{1}{N^{n}} \left( \Phi_{n} \right)^{*} (x) \right\}_{n\, \in\, \mathbb{Z}_{+}} \] 
converge, where 
\begin{eqnarray} 
\label{Phinl*} 
\left( \Phi_{n} \right)_{*} (x) & = & \inf_{ v\, \in\, \mathcal{K}}\ \sum_{\omega^{n}\, \in\, \Sigma_{N}^{n}}\ \sum_{j\, =\, 0}^{n - 1}\ \Phi \left( T_{\pi_{j} \left( \omega^{n} \right)} x,\, \mathcal{R}_{T_{\pi_{j - 1} \left( \omega^{n} \right)} x} \circ \cdots \circ \mathcal{R}_{T_{\pi_{0} \left( \omega^{n} \right)} x} v \right)\ \ \text{and} \nonumber \\ 
& & \\ 
\label{Phinu*} 
\left( \Phi_{n} \right)^{*} (x) & = & \sup_{ v\, \in\, \mathcal{K}}\ \sum_{\omega^{n}\, \in\, \Sigma_{N}^{n}}\ \sum_{j\, =\, 0}^{n - 1}\ \Phi \left( T_{\pi_{j} \left( \omega^{n} \right)} x,\, \mathcal{R}_{T_{\pi_{j - 1} \left( \omega^{n} \right)} x} \circ \cdots \circ \mathcal{R}_{T_{\pi_{0} \left( \omega^{n} \right)} x} v \right). \nonumber \\ 
& & 
\end{eqnarray} 
Moreover, there exist probability measures $(\nu')_{*}$ and $(\nu')^{*}$ supported on $X \times \mathscr{K}$ such that 
\begin{eqnarray}
\label{Phimeasureu}
\int \Phi \mathrm{d} (\nu')_{*} & = & \int \Phi_{*} \mathrm{d} \mu\ \ =\ \ \int \lim_{n\, \to\, \infty} \frac{1}{n} \frac{1}{N^{n}} \left( \Phi_{n} \right)_{*} \mathrm{d}\mu \\ 
\label{Phimeasurel}
\int \Phi \mathrm{d} (\nu')^{*} & = & \int \Phi^{*} \mathrm{d} \mu\ \ =\ \ \int \lim_{n\, \to\, \infty} \frac{1}{n} \frac{1}{N^{n}} \left( \Phi_{n} \right)^{*} \mathrm{d}\mu. 
\end{eqnarray}
\end{proposition}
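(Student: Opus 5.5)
The plan is to transport Proposition \ref{nustar} to the simultaneous setting. Averaging over $\omega^{n}$ plays the role that fixing $\omega$ played there, and Theorem \ref{kingmanomegafreep} replaces Theorem \ref{thm:kingmanomega}.

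\textbf{Step 1: measurability and a countable supremum.} Since $\Phi(y,\cdot)$ is continuous and each $\mathcal{R}_{y}$ is continuous, the map
\[ v \longmapsto \sum_{\omega^{n}} \sum_{j} \Phi\Big( T_{\pi_{j}(\omega^{n})}x,\ \mathcal{R}_{T_{\pi_{j-1}(\omega^{n})}x} \circ \cdots \circ \mathcal{R}_{T_{\pi_{0}(\omega^{n})}x}\, v \Big) \]
is continuous on $\mathscr{K}$. Hence the infimum and supremum in \eqref{Phinl*} and \eqref{Phinu*} may be taken over a countable dense set $\Delta \subset \mathscr{K}$, so $(\Phi_{n})^{*}$ and $(\Phi_{n})_{*}$ are measurable in $x$.

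\textbf{Step 2: subadditivity.} Split each word $\omega^{n+p}$ into its first $n$ letters and its last $p$ letters. The inner sum then becomes a sum over $j<n$ plus a sum over $n \le j < n+p$. Taking the supremum of each piece separately gives
\[ (\Phi_{n+p})^{*}(x) \ \le\ N^{p}\,(\Phi_{n})^{*}(x) \ +\ \sum_{\omega^{n}} (\Phi_{p})^{*}\big(T_{\omega^{n}}x\big), \]
with the reverse inequality holding for the infimum version. Dividing by $N^{n+p}$ shows that $a_{n}(x) = N^{-n}(\Phi_{n})^{*}(x)$ satisfies
\[ a_{n+p}(x) \ \le\ a_{n}(x) + N^{-n}\sum_{\omega^{n}} a_{p}\big(T_{\omega^{n}}x\big). \]
This is exactly the averaged subadditive structure underlying Theorem \ref{kingmanomegafreep}. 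Its integrability follows from $|a_{1}| \le \|\Phi(\cdot,\cdot)\|_{\infty}$, applied along each branch.

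\textbf{Step 3: convergence.} Invoking (the subadditive version of) Theorem \ref{kingmanomegafreep} gives almost-everywhere convergence of $a_{n}/n$ to $\Phi^{*}$, with $\Phi^{*}$ invariant under every $T_{i}$ and $\int \Phi^{*}\,\mathrm{d}\mu = \lim_{n} \frac{1}{n}\int a_{n}\,\mathrm{d}\mu$. Applying the same argument to $-\Phi$ yields $\Phi_{*}$.

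\textbf{Step 4: the measures.} Following Section \ref{markov}:
\begin{enumerate}
\item The set of $v$ where the supremum is attained is a nonempty compact set depending measurably on $x$, so Theorem \ref{equivalent} provides a measurable selection $w_{n}(x)$ of a maximising $v$.
\item Define $\gamma_{n}$ from the Dirac kernel $\kappa_{n}(x,\cdot) = \delta_{w_{n}(x)}$ via Theorem \ref{markovthm}, so that $\gamma_{n} \in \mathbb{M}'(\mu)$.
\item Set
\[ \nu'_{n} \ =\ \frac{1}{n}\,\frac{1}{N^{n}}\sum_{\omega^{n}}\sum_{j=0}^{n-1} \big(\mathcal{G}_{\omega^{n},j}\big)_{*}\gamma_{n}, \quad \text{where} \quad \mathcal{G}_{\omega^{n},j}(y,v) = \Big(T_{\pi_{j}(\omega^{n})}y,\ \mathcal{R}_{T_{\pi_{j-1}(\omega^{n})}y} \circ \cdots \circ \mathcal{R}_{y}\, v\Big). \]
Each $T_{i}$ preserves $\mu$, so $\mathrm{proj}_{*}\nu'_{n} = \mu$.
\item By construction, $\int \Phi\,\mathrm{d}\nu'_{n} = \frac{1}{n}\int a_{n}\,\mathrm{d}\mu$.
\item Compactness of $\mathbb{M}'(\mu)$ gives a convergent subsequence with limit $(\nu')^{*}$. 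Passing to the limit yields \eqref{Phimeasurel}, and the infimum version gives $(\nu')_{*}$ and \eqref{Phimeasureu}.
\end{enumerate}
Note that the statement does not require invariance, so I would not try to prove it.

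\textbf{Main obstacle.} The delicate step is Step 4(v): passing $\int \Phi\,\mathrm{d}\nu'_{n_{k}} \to \int \Phi\,\mathrm{d}(\nu')^{*}$ when $\Phi$ is only in $\mathcal{M}'$ and not continuous in $x$. I would handle this as in Section \ref{markov}, taking the topology on $\mathbb{M}'(\mu)$ to be the weak$^{*}$ topology dual to $\mathcal{M}'$, which is why completeness of $\mathcal{M}'$ is needed. A second concern is that the limit must hold for the given $\Phi$ along a subsequence possibly depending on $\Phi$. This is acceptable, since the statement is for a fixed $\Phi$.
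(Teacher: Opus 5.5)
Your proposal is correct and follows essentially the same route as the paper. Both arguments use the averaged subadditivity, an appeal to the averaged Kingman-type theorem (which the paper states inside its proof as Theorem \ref{kingmanomegafreel}), a measurable selection of maximisers, Dirac kernels, averaged push-forwards of $\gamma_{n}'$ under the maps $\mathcal{G}_{\omega^{n}}^{j}$, and compactness of $\mathbb{M}'(\mu)$. Your subadditivity inequality, with the factor $N^{p}$ on $(\Phi_{n})^{*}(x)$ and the normalisation $a_{n} = N^{-n}(\Phi_{n})^{*}$, is in fact the correct form: the paper's display \eqref{Nsubadd} omits that factor, and taking $\Phi \equiv 1$ with $N \ge 2$ shows the displayed inequality fails.
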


Since the ideas to prove Proposition \ref{Ginvariantmeasure} are similar to that of the proof of Proposition \ref{nustar}, we shall only state the important steps in the proof. 

\begin{proof} 
We start by observing that 
\begin{eqnarray*} 
& & \left( \Phi_{n + p} \right)^{*} (x) \nonumber \\ 
& = & \sup_{v\, \in\, \mathcal{K}}\ \sum_{\omega^{n + p}\, \in\, \Sigma_{N}^{n + p}} \sum_{j\, =\, 0}^{n + p - 1}\ \Phi \left( T_{\pi_{j} \left( \omega^{n + p} \right)} x,\, \mathcal{R}_{T_{\pi_{j - 1} \left( \omega^{n + p} \right)} x} \circ \cdots \circ \mathcal{R}_{T_{\pi_{0} \left( \omega^{n + p} \right)} x} v \right) \nonumber \\ 
& \le & \sup_{v\, \in\, \mathcal{K}}\ \sum_{\omega^{n + p}\, \in\, \Sigma_{N}^{n + p}}\ \sum_{j\, =\, 0}^{n - 1}\ \Phi \left( T_{\pi_{j} \left( \omega^{n} \right)}x,\, \mathcal{R}_{T_{\pi_{j - 1} \left( \omega^{n + p} \right)} x} \circ \cdots \circ \mathcal{R}_{T_{\pi_{0} \left( \omega^{n + p} \right)} x} v \right) \nonumber \\ 
& & \hspace{+2cm} +\ \sup_{v\, \in\, \mathcal{K}}\ \sum_{\omega^{n + p}\, \in\, \Sigma_{N}^{n + p}}\ \sum_{j\, =\, n}^{n + p - 1}\ \Phi \left( T_{\pi_{j} \left( \omega^{n + p} \right)} x, \mathcal{R}_{T_{\pi_{j - 1} \left( \omega^{n + p} \right)} x} \circ \cdots \circ \mathcal{R}_{T_{\pi_{0} \left( \omega^{n + p} \right)} x} v \right) \nonumber \\ 
\end{eqnarray*} 

\begin{eqnarray} 
\label{Nsubadd} 
& \le & \sup_{v\, \in\, \mathcal{K}}\ \sum_{\omega^{n}\, \in\, \Sigma_{N}^{n}} \sum_{j\, =\, 0}^{n - 1}\ \Phi \left( T_{\pi_{j} \left( \omega^{n} \right)} x,\, \mathcal{R}_{T_{\pi_{j-1} \left( \omega^{n} \right)} x} \circ \cdots \circ \mathcal{R}_{T_{\pi_{0} \left( \omega^{n} \right)} x} v \right) \nonumber \\ 
& & +\ \sum_{\omega^{n}\, \in\, \Sigma_{N}^{n}}\ \sup_{v\, \in\, \mathcal{K}}\ \sum_{\omega^{p}\, \in\, \Sigma_{N}^{p}}\ \sum_{j\, =\, 0}^{p - 1}\ \Phi \left( T_{\pi_{j} \left( \omega^{p} \right)} \left( T_{\omega^{n}} x \right),\, \mathcal{R}_{T_{\pi_{j - 1} \left( \omega^{p} \right)} \left( T_{\omega^{n}} x \right)} \circ \cdots \circ \mathcal{R}_{T_{\pi_{0} \left( \omega^{p} \right)} \left( T_{\omega^{n}} x \right)} v \right) \nonumber \\ 
& = & \left( \Phi_{n} \right)^{*} (x) + \sum_{\omega^{n}\, \in\, \Sigma_{N}^{n}} \left( \Phi_{p} \right)^{*} \left( T_{\omega^{n}} x \right). 
\end{eqnarray}  

We now state a theorem, as in \cite{ts:pp} due to the authors, that plays a vital role in proving the convergence of the real sequences mentioned in Proposition \ref{Ginvariantmeasure}. 

\begin{theorem}\cite{ts:pp} 
\label{kingmanomegafreel} 
For a sequence of functions, say $\left\{ (\Phi_{n})^{*} \right\}_{n\, \ge\, 1}$ that satisfies Equation \eqref{Nsubadd}, suppose the positive part of $(\Phi_{1})^{*}$, denoted by $\left[ \left( \Phi_{1} \right)^{*} \right]^{+} \in \mathscr{L}^{1} (\mu)$. Then, the sequence $\displaystyle{\left\{ \dfrac{1}{n} \dfrac{1}{N^{n}} \left( \Phi_{n} \right)^{*} \right\}_{n\, \ge\, 1}}$ converges to some measurable function $\Phi^{*} : X \longrightarrow [-\infty, \infty)\ $ for $\mu$-almost every $x \in X$. Moreover, the positive part of the limit function is integrable and 
\[ \int \Phi^{*} \mathrm{d}\mu\ \ =\ \ \lim_{n\, \to\, \infty} \frac{1}{n} \frac{1}{N^{n}} \int \left( \Phi_{n} \right)^{*} \mathrm{d}\mu\ \ =\ \ \inf_{n\, \ge\, 1} \frac{1}{n} \frac{1}{N^{n}} \int \left( \Phi_{n} \right)^{*} \mathrm{d}\mu\ \ \in\ \ [-\infty, \infty). \] 
\end{theorem}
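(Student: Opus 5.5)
The plan is to reduce the statement to a Kingman-type theorem for one measure-preserving map. A natural candidate is the lifted system $\widehat{T} : \Sigma_{N}^{+} \times X \circlearrowright$ given by $\widehat{T} (\omega, x) = \left( \sigma \omega, T_{\pi_{1} (\omega)} x \right)$. When $\mathfrak{B}$ is the uniform Bernoulli measure, $\widehat{T}$ preserves $\mathfrak{B} \times \mu$, because each $T_{i}$ preserves $\mu$. For a function $\Psi$ on $X$, define $\widehat{\Psi} (\omega, x) = \Psi (x)$. Then averaging over the first $n$ letters gives
\[ \frac{1}{N^{n}} \sum_{\omega^{n}\, \in\, \Sigma_{N}^{n}} \Psi \left( T_{\omega^{n}} x \right)\ \ =\ \ \int_{\Sigma_{N}^{+}} \widehat{\Psi} \left( \widehat{T}^{n} (\omega, x) \right) \mathrm{d}\mathfrak{B} (\omega). \]
This suggests setting $a_{n} (x) = \frac{1}{N^{n}} \left( \Phi_{n} \right)^{*} (x)$ and normalising Equation \eqref{Nsubadd} by $N^{n + p}$. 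The expected outcome is the subadditive relation with respect to the averaging (Markov) operator $P \Psi (x) = \frac{1}{N} \sum_{i} \Psi (T_{i} x)$:
\[ a_{n + p}\ \le\ a_{n} + P^{n} a_{p}. \]
The main caveat concerns the normalisation. Equation \eqref{Nsubadd} has first term $\left( \Phi_{n} \right)^{*}$ but total weight $N^{n + p}$, so one must check that the factors of $N$ balance. I would resolve this by reading $\left( \Phi_{n} \right)^{*}$ as a sum over $\Sigma_{N}^{n}$ of length-$n$ orbits, which makes the word counts multiply correctly. $P$ is a positive contraction on $\mathscr{L}^{1} (\mu)$ that fixes constants and preserves $\int \cdot \, \mathrm{d}\mu$.

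First I would establish the integral identity. The sequence $c_{n} = \int a_{n} \mathrm{d}\mu$ is subadditive because $\int P^{n} a_{p} \mathrm{d}\mu = \int a_{p} \mathrm{d}\mu$. By Fekete's lemma $\lim c_{n} / n = \inf c_{n} / n \in [-\infty, \infty)$. The hypothesis $\left[ \left( \Phi_{1} \right)^{*} \right]^{+} \in \mathscr{L}^{1} (\mu)$ together with $a_{n} \le \sum_{j < n} P^{j} a_{1}$ shows that each $a_{n}^{+}$ is integrable. Next comes the pointwise convergence, which is the main obstacle. Here I would follow the Kingman route in its operator form (Akcoglu--Sucheston), using a Riesz-type decomposition $a_{n} = \sum_{j < n} P^{j} g + h_{n}$ with $g = a_{1}$ and $h_{n} \le 0$ superadditive in the same sense. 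The additive part is handled by Theorem \ref{kingmanomegafreep}, the Birkhoff analogue for simultaneous dynamics: applied to $g$, it gives a.e. convergence of $\frac{1}{n} \sum_{j < n} P^{j} g$ to an invariant $G$.

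For the remainder $h_{n}$, which is non-positive, I would prove that $\liminf \frac{1}{n} h_{n}$ and $\limsup \frac{1}{n} h_{n}$ coincide a.e. The route is the standard upper bound obtained by iterating the inequality along blocks of length $m$. Writing $n = km + r$ gives
\[ a_{n}\ \le\ \sum_{i < k} P^{im} a_{m} + P^{km} a_{r}. \]
Applying Theorem \ref{kingmanomegafreep} to the $m$-step averaged system, whose operator is $P^{m}$, yields
\[ \limsup \frac{a_{n}}{n}\ \le\ \frac{1}{m} \widetilde{(a_{m})}, \]
where $\widetilde{(a_{m})}$ is the invariant limit from that theorem. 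Integrating and letting $m \to \infty$ bounds $\int \limsup a_{n} / n$ by $\inf c_{m} / m$. The matching lower bound $\int \liminf a_{n} / n \ge \lim c_{n} / n$ would come from a Fatou-type argument on the truncations $\max \{ a_{n} / n, -M \}$, as in the classical proof. I expect the delicate points to be the step from the $m$-block system back to the full system, and the control of the $P^{km} a_{r}$ tail; a Borel--Cantelli argument like the one in Lemma \ref{lem:estimates}(c) should handle the tail.

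Combining the two bounds gives $\liminf = \limsup$ almost everywhere, which yields the measurable limit $\Phi^{*}$ with integrable positive part and the stated chain of equalities for $\int \Phi^{*} \mathrm{d}\mu$. If the reduction to a Markov-operator Kingman theorem works, an alternative is to lift to $\Sigma_{N}^{+} \times X$ and apply Theorem \ref{thm:kingmanomega}, then integrate out $\omega$. That route, however, only gives convergence of the $\omega$-averages in mean, so I would prefer the operator approach above.
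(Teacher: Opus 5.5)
The paper gives no proof of this statement (it is imported from \cite{ts:pp}), so your argument has to stand on its own. It does not yet, because the decisive step is missing. Your block argument gives $\int \limsup_n a_n/n \,\mathrm{d}\mu \le \gamma := \lim_n c_n/n$. What remains is $\int \liminf_n a_n/n \,\mathrm{d}\mu \ge \gamma$, which is the real content of Kingman's theorem, and no Fatou-type argument yields it. Fatou's lemma bounds $\int \liminf$ from above, and its reverse form bounds $\int \limsup$ from below; neither gives a lower bound for $\int \liminf$.

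Passing to the non-positive remainder $h_n = a_n - \sum_{j<n} P^j a_1$ does not address this; note also that $h_n$ is subadditive, not superadditive. Nor can you transplant the usual stopping-time covering argument, which needs $\liminf_n a_n/n$ to be invariant. Here $a_{n+1} \le a_1 + P a_n$ gives $\limsup_n a_n/n \le P(\limsup_n a_n/n)$. For the $\liminf$, however, the pointwise inequality $\liminf_n P g_n \ge P(\liminf_n g_n)$ points the wrong way.

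What closes the proof is Kingman's decomposition in the operator form of Akcoglu and Sucheston. If $\gamma > -\infty$, there is $f \in \mathscr{L}^{1}(\mu)$ with $\int f \,\mathrm{d}\mu = \gamma$ and $R_n := a_n - \sum_{j<n} P^j f \ge 0$. Then:
\begin{enumerate}
\item $R_n$ is subadditive with time constant $0$;
\item your block bound gives $\int \limsup_n R_n/n \,\mathrm{d}\mu \le 0$, so $R_n/n \to 0$ a.e.;
\item Theorem \ref{kingmanomegafreep} gives $a_n/n \to \widetilde{f}$ a.e., with $\int \widetilde{f} \,\mathrm{d}\mu = \gamma$.
\end{enumerate}
Producing $f$ is where the work lies. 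Set $d_i = a_i - P a_{i-1}$ with $a_0 = 0$. Then $d_i \le a_1$, and $\sum_{j<n} P^j d_{i+n-j} = a_{n+i} - P^n a_i \le a_n$ for every $i \ge 0$. The function $f$ is obtained as a Koml\'os-type limit of Ces\`aro means of the $d_i$. The case $\gamma = -\infty$ follows by applying this to $\max\{a_n, -Mn\}$. That sequence stays subadditive because $\max\{P^n g, c\} \le P^n \max\{g, c\}$ for constants $c$.

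Two further points. First, your suspicion about the normalisation is right, but it is a correction of the hypothesis rather than a reading of it. In the derivation of Equation \eqref{Nsubadd}, each term with $j < n$ occurs $N^p$ times. So the functions in Equation \eqref{Phinu*} satisfy $(\Phi_{n+p})^* \le N^p (\Phi_n)^* + \sum_{\omega^n \in \Sigma_N^n} (\Phi_p)^* \circ T_{\omega^n}$, which is exactly $a_{n+p} \le a_n + P^n a_p$.

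With Equation \eqref{Nsubadd} as printed, the statement is false. Take $N = 2$, $T_1 = T_2 = \mathrm{id}$ and $(\Phi_n)^* \equiv -2^n n\, g(n)$ with $g(n) = \frac{3}{2} - \frac{1}{2}\cos(\log n)$. Equation \eqref{Nsubadd} then reduces to $(n+p)\, g(n+p) \ge 2^{-p} n\, g(n) + p\, g(p)$. This holds for all $n, p \ge 1$: use $g(1) = 1$ when $p = 1$, and $|g(n+p) - g(p)| \le n/(2p)$ when $p \ge 2$. Also $[(\Phi_1)^*]^+ = 0$. Yet $\frac{1}{n} \frac{1}{N^n} (\Phi_n)^* = -g(n)$ does not converge.

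Second, the tail $\frac{1}{n} P^{km} a_r$ cannot be handled by a Borel--Cantelli argument as in Lemma \ref{lem:estimates}. That argument relies on equidistribution under a measure-preserving map, which $P^{km}$ lacks. Instead, $\frac{1}{k} (P^m)^k a_r^+ \to 0$ a.e., since it is a difference of consecutive ergodic averages, and those converge by Theorem \ref{kingmanomegafreep}.
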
 

An application of Theorem \ref{kingmanomegafreel} to the sequence of functions $\left\{ (\Phi_{n})^{*} \right\}_{n\, \ge\, 1}$, as defined in Equation \eqref{Phinu*} then assures the existence of the limit function $\Phi^{*}$ for $\mu$-almost every $x \in X$. Similarly, one works with the sequence of functions $\left\{ (\Phi_{n})_{*} \right\}_{n\, \ge\, 1}$, as defined in Equation \eqref{Phinl*} to obtain the limit function $\Phi_{*}$ for $\mu$-almost every $x \in X$. 

Considering 
\begin{eqnarray*} 
\Delta_{n}' & = & \Bigg\{ \left( x,\, v \right) \in X \times \mathscr{K} : \left( \Phi_{n} \right)^{*} (x) \\ 
& & \hspace{+3cm} =\ \sum_{\omega^{n}\, \in\, \Sigma_{N}^{n}}\ \sum_{j\, =\, 0}^{n - 1}\ \Phi \left( T_{\pi_{j} \left( \omega^{n} \right)} x,\, \mathcal{R}_{T_{\pi_{j - 1} \left( \omega^{n} \right)} x} \circ \cdots \circ \mathcal{R}_{T_{\pi_{0} \left( \omega^{n} \right)} x} v \right) \Bigg\} \\ 
\Delta_{n}' (x) & = & \big\{ v \in \mathscr{K} : \left( x,\, v \right) \in \Delta_{n}' \big\}\ \ \ \text{for each}\ x \in X, 
\end{eqnarray*} 
we get a measurable map from $X \longrightarrow 2^{\mathscr{K}}_{{\rm cpt}}$, that in turn leads us to a measurable selection, say $u_{n}' : X \longrightarrow \mathscr{K}$ by Theorem \ref{equivalent}, that satisfies $u_{n}' (x) \in \Delta_{n}'(x)$. 

For $B = B_{X} \times B_{\mathscr{K}} \in \mathcal{B}_{X} \times \mathcal{B}_{\mathscr{K}}$, define  a sequence of measures $\left\{ \gamma_{n}' \right\}_{n\, \in\, \mathbb{Z}_{+}}$ supported on $X \times \mathscr{K}$ given by 
\[ \gamma_{n}' (B)\ \ =\ \ \int_{B_{X}} \kappa_{n}' \left( x,\, B_{\mathscr{K}} \right) \mathrm{d} \mu\ \ \ \text{where}\ \ \ \kappa_{n}' \left( x,\, B_{\mathscr{K}} \right)\ \ =\ \ \begin{cases} 1 & \text{if}\ u_{n}' (x) \in B_{\mathscr{K}}; \\ 0 & \text{otherwise}. \end{cases} \] 

Suppose for a fixed $\omega^{n} \in \Sigma_{N}^{n}$, we define the action of a map $\mathcal{G}_{\omega^{n}} : X \times \mathscr{K} \longrightarrow X \times \mathscr{K}$ in such a fashion that for any $0 \le j \le n$, we have $\mathcal{G}_{\omega^{n}}^{j} \left( x,\, v \right) = \left( T_{\pi_{j} \left( \omega^{n} \right)} x,\, \mathcal{R}_{T_{\pi_{j - 1} \left( \omega^{n} \right)} x} \circ \cdots \circ \mathcal{R}_{T_{\pi_{0} \left( \omega^{n} \right)} x} v \right)$, then, we consider the sequence of measures $\nu_{n}'$ on $X \times \mathscr{K}$ defined by
\[ \nu_{n}' (B)\ \ =\ \ \frac{1}{n} \frac{1}{N^{n}} \sum_{\omega^{n}\, \in\, \Sigma_{N}^{n}}\ \sum_{j\, =\, 0}^{n - 1}\ \left( \mathcal{G}_{\omega^{n}} \right)_{*}^{j} \gamma_{n}' (B)\ \ =\ \ \frac{1}{n} \frac{1}{N^{n}} \sum_{\omega^{n}\, \in\, \Sigma_{N}^{n}}\ \sum_{j\, =\, 0}^{n - 1}\ \gamma_{n}' \left( \mathcal{G}_{\omega^{n}}^{-j} (B) \right). \] 

Clearly, $\left\{ \gamma_{n}' \right\}_{n\, \in\, \mathbb{Z}_{+}}$ and thus, $\left\{ \nu_{n}' \right\}_{n\, \in\, \mathbb{Z}_{+}}$ are sequences of measures in $\mathbb{M}'(\mu)$. Since $\mathbb{M}'(\mu)$ is a compact metric space, there exists a subsequence $\left\{ \nu_{n_{k}}' \right\}$ that converges to some some measure $\left( \nu' \right)^{*} \in  \mathbb{M}'(\mu)$. Furthermore, applying Theorem \ref{kingmanomegafreel}, we get $\displaystyle{\int \Phi \mathrm{d} \left( \nu' \right)^{*} = \int \Phi^{*} \mathrm{d} \mu}$. Analogously, one can prove $\displaystyle{\int \Phi \mathrm{d} \left( \nu' \right)_{*} = \int \Phi_{*} \mathrm{d} \mu}$. 
\end{proof}

Hence, for $\mu$-almost every $x \in X$, there exist functions $\left(u'\right)_{*}, \left(u'\right)^{*} : X \longrightarrow \mathscr{K}$ such that 
\begin{eqnarray*} 
\Phi_{*} \left( x \right) & = & \lim_{n\, \to\, \infty} \frac{1}{n} \frac{1}{N^{n}} \sum_{\omega^{n}\, \in\, \Sigma_{N}^{n}}\ \sum_{j\, =\, 0}^{n - 1}\ \Phi \left( T_{\pi_{j} \left( \omega^{n} \right)} x,\, \mathcal{R}_{T_{\pi_{j - 1} \left( \omega^{n} \right)} x} \circ \cdots \circ \mathcal{R}_{T_{\pi_{0} \left( \omega^{n} \right)} x} \left( u' \right)_{*} (x) \right); \\ 
\Phi^{*} \left( x \right) & = & \lim_{n\, \to\, \infty} \frac{1}{n} \frac{1}{N^{n}} \sum_{\omega^{n}\, \in\, \Sigma_{N}^{n}}\ \sum_{j\, =\, 0}^{n - 1}\ \Phi \left( T_{\pi_{j} \left( \omega^{n} \right)} x,\, \mathcal{R}_{T_{\pi_{j - 1} \left( \omega^{n} \right)} x} \circ \cdots \circ \mathcal{R}_{T_{\pi_{0} \left( \omega^{n} \right)} x} \left( u' \right)^{*} (x) \right). 
\end{eqnarray*} 

Since the remainder of the current section is only, but an argument analogous to the ones made in Sections \ref{invariantmsb} and \ref{action}, we merely state the necessary definitions and results. Readers may find it interesting to prove the same, using the same ideas. 

\begin{definition}
We say that the map $\mathcal{H} : X \longrightarrow \mathbb{R}^{d}$ given by $\mathcal{H} (x) = V_{E_{0}}$ where $E_{0} = \{ x \}$ is a \emph{measurable sub-bundle} if one of the statements in Theorem \ref{equivalent:4} holds. Further, a measurable sub-bundle is called an \emph{$\omega$-free measurable sub-bundle} if $L(x) V_{E_{0}} = V_{E_{1}}$, where $E_{0} =\{ x \}$ and $E_{1} = \bigcup\limits_{\omega^{1}\, \in\, \Sigma_{N}^{1}} T_{\omega^{1}} x$, for $\mu$- almost every $x \in X$. 
\end{definition}

\begin{proposition} 
\label{operatoronvx}
For the simultaneous dynamical system defined on $X \times \mathbb{R}^{d}$, as given in Equation \eqref{actionofR} and considered in Theorem \ref{thm:oselnoomega}, let $\mathcal{H} : X \longrightarrow \mathbb{R}^{d}$ be an $\omega$-free measurable sub-bundle. Then, for $\mu$-almost every $x \in X$, we have 
\begin{eqnarray*} 
\lim_{n\, \to\, \infty} \frac{1}{n} \frac{1}{N^{n}} \sum_{\omega^{n}\, \in\, \Sigma_{N}^{n}} \log \left\| \left. L_{\omega^{n}}^{n} (x) \right|_{V_{E_{0}}} \right\|_{{\rm op}} & = & \max \left\{ \Lambda \left( E_{0},\, \{ v \} \right)\ :\ v \in V_{E_{0}} \setminus \left\{ 0 \right\} \right\} \\ 
\lim_{n\, \to\, \infty} \frac{1}{n} \frac{1}{N^{n}} \sum_{\omega^{n}\, \in\, \Sigma_{N}^{n}} \log \left\| \left. \left( L_{\omega^{n}}^{n} (x) \right)^{-1} \right|_{V_{E_{0}}} \right\|^{-1}_{{\rm op}} & = & \min \left\{ \Lambda \left( E_{0},\, \{ v \} \right)\ :\ v \in V_{E_{0}} \setminus \left\{ 0 \right\} \right\},
\end{eqnarray*} 
where $E_{0} = \{ x \}$ and $L_{\omega^{n}}^{n} (x) = \prod \limits_{j\, =\, 0}^{n - 1} L \left( T_{\pi_{j} \left( \omega^{n} \right)} (x) \right)$. 
\end{proposition}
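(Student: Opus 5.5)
The plan is to imitate the proof of Proposition \ref{prop:usein10}, replacing the pathwise tools by their simultaneous counterparts. First I reduce to a cocycle on a fixed-dimensional space. By Theorem \ref{equivalent:4}, the set $\mathcal{X}_{l}=\{x : \dim V_{E_{0}} = l\}$ is measurable and there are measurable vector fields spanning $V_{E_{0}}$. Applying Gram--Schmidt gives a measurable orthonormal frame, and hence a measurable isometry $V_{E_{0}}\cong\mathbb{R}^{l}$. Using the $\omega$-free invariance $L(x)V_{E_{0}}=V_{E_{1}}$, I define the restricted cocycle $L_{l}(x)=L(x)|_{V_{E_{0}}}\in{\rm GL}_{l}(\mathbb{R})$, written in these frames. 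Since $\|L_{l}(x)^{\pm1}\|_{\rm op}\le\|L(x)^{\pm1}\|_{\rm op}$, the integrability hypothesis passes to $L_{l}$. Theorem \ref{thm:fkomegafree} then yields, for $\mu$-a.e. $x$, the limits $H^{\pm}(x)$ of $\frac{1}{n}\frac{1}{N^{n}}\sum_{\omega^{n}}\log\|(L^{n}_{\omega^{n}}(x)|_{V_{E_{0}}})^{\pm1}\|^{\pm1}_{\rm op}$. These are exactly the two left-hand sides in the statement, so it remains to identify $H^{+}$ with the maximum and $H^{-}$ with the minimum of $\Lambda(E_{0},\{v\})$ over $v\in V_{E_{0}}\setminus\{0\}$.

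For the easy inequalities I argue as in Lemma \ref{lambdapmbound}, applied word by word. For every $\omega^{n}$ and every $v\in V_{E_{0}}\setminus\{0\}$,
\[ \|(L^{n}_{\omega^{n}}(x)|_{V})^{-1}\|^{-1}\|v\|\le\|L^{n}_{\omega^{n}}(x)v\|\le\|L^{n}_{\omega^{n}}(x)|_{V}\|\,\|v\|. \]
Averaging $\frac{1}{n}\log$ of this over $\Sigma_{N}^{n}$ and taking $\limsup$ gives $H^{-}(x)\le\Lambda(E_{0},\{v\})\le H^{+}(x)$, by the definition in Equation \eqref{eqn:Lxv}.

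For the reverse inequalities I projectivise, replacing $\mathbb{R}^{l}$ by $\mathscr{K}={\rm P}\mathbb{R}^{l}$, with $\mathcal{R}_{x}[v]=[L_{l}(x)v]$ and $\Phi(x,[v])=\log(\|L_{l}(x)v\|/\|v\|)$. Integrability of $\log^{+}\|L_{l}^{\pm1}\|$ gives $\Phi\in\mathcal{M}'(X\times\mathscr{K})$. The cocycle identity telescopes the inner sum, so that $\sum_{j}\Phi(\cdot)=\log(\|L^{n}_{\omega^{n}}(x)v\|/\|v\|)$.

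The main obstacle is the following interchange. The quantity $(\Phi_{n})^{*}(x)$ of Equation \eqref{Phinu*} is a supremum over $v$ of a sum over words. The operator norm, by contrast, is a sum over words of suprema. Hence one only has $\frac{1}{N^{n}}(\Phi_{n})^{*}\le\sum_{\omega^{n}}\frac{1}{N^{n}}\log\|L^{n}_{\omega^{n}}|_{V}\|$, and the inequality runs the wrong way.

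To close this gap I would exploit the comparison $\|L^{n}_{\omega^{n}}|_{V}\|\le\sqrt{l}\,\max_{i}\|L^{n}_{\omega^{n}}e_{i}\|$ for the measurable orthonormal frame $e_{1},\dots,e_{l}$. Then I would argue that, along a subsequence, each word's maximum is attained by a fixed index in the frame. I would partition $\Sigma_{N}^{n}$ according to which $e_{i}$ realises the maximum and bound the average by $\max_{i}$ of the partial averages plus $\frac{\log\sqrt{l}}{n}$. Combining this with Lemma \ref{fonrd}(a), the finitely many values of $\Lambda(E_{0},\cdot)$ and the ultrametric inequality \eqref{Lamdav1v2}, I would try to show $H^{+}(x)\le\max_{i}\Lambda(E_{0},\{e_{i}\})$.

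Failing that, I would fall back on Proposition \ref{Ginvariantmeasure} and the selections $(u')^{*}(x)$, $(u')_{*}(x)$ stated after it. These produce a vector $w^{*}$ with $\Lambda(E_{0},\{w^{*}\})=\Phi^{*}(x)$. I would then need to justify $\Phi^{*}=H^{+}$, which is precisely the interchange above and is where I expect the real work to lie. The lower bound, identifying $H^{-}$ with the minimum through $(u')_{*}$, is handled symmetrically.
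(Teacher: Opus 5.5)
You have found the real difficulty. The paper gives no separate proof of this proposition; it defers to the argument of Proposition \ref{prop:usein10}. That argument rests on identifying $\Phi^{*},\Phi_{*}$ with $H^{+},H^{-}$. Pathwise this is the trivial identity $\sup_{v}\log(\|L^{n}_{\omega}v\|/\|v\|)=\log\|L^{n}_{\omega}\|$, but here it requires exchanging $\sup_{v}$ (resp.\ $\inf_{v}$) with the average over words.

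\textbf{The minimum formula is false.} Your proposal does not close this gap, and for the second identity it cannot be closed: the step you call ``handled symmetrically'' fails because the minimum formula is false. Take $X=\{a,b\}$ with $\mu$ uniform, $N=2$, $T_{1}=\mathrm{id}$, $T_{2}$ the swap, $L(a)=\begin{pmatrix}2&0\\0&1\end{pmatrix}$, $L(b)=\begin{pmatrix}2&1\\0&1\end{pmatrix}$, and $\mathcal{H}\equiv\mathbb{R}^{2}$. Start from $x=a$ and let $b_{j}=1$ if $T_{\pi_{j}(\omega^{n})}a=b$, and $b_{j}=0$ otherwise. Then $b_{0}=0$, and $b_{j}$ changes exactly when $\omega_{j}=2$. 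Hence under the uniform average over $\Sigma_{2}^{n}$ the bits $b_{1},\dots,b_{n-1}$ are i.i.d.\ fair, and
\[ L^{n}_{\omega^{n}}(a)=\begin{pmatrix}2^{n}&2^{n}u_{n}\\0&1\end{pmatrix},\qquad u_{n}=\sum_{j=1}^{n-1}b_{j}2^{-1-j}\in[0,\tfrac12]. \]
The determinant is $2^{n}$ and $2^{n}\le\|L^{n}_{\omega^{n}}(a)\|_{\rm op}\le 2^{n+1}$, so the smallest singular value lies in $[\frac12,1]$ for every word. The left side of the second identity is therefore $0$.

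Now fix $v=(v_{1},v_{2})\neq0$. Along almost every infinite word, $\frac1n\log\|L^{n}_{\omega^{n}}(a)v\|\to\log 2$:
\begin{itemize}
\item if $v_{2}=0$, this is trivial;
\item if $v_{2}\neq0$, then $u_{n}\to u$ with $u$ uniform on $[0,\frac12]$, so $v_{1}+uv_{2}\neq0$ almost surely.
\end{itemize}
These quantities are uniformly bounded (between $\frac1n\log|v_{2}|$ and $\frac1n\log(2^{n+1}\|v\|)$), so dominated convergence gives $\Lambda(\{a\},\{v\})=\log2$ for every $v\neq0$. Hence $\min_{v}\Lambda(E_{0},\{v\})=\log 2\neq 0$.

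Here $r(E_{0})=1$ and $V^{1}_{E_{0}}=\mathbb{R}^{2}$, so this is exactly the sub-bundle to which the base case of part $C$ would apply the proposition. The mechanism is general. The pathwise slowest direction (here the line through $(-u,1)$) depends on future letters, so no single $v$ realises the bottom exponent along almost every word, and $\inf_{v}$ does not commute with averaging over words.

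\textbf{The maximum formula is correct, but your frame argument does not prove it.} In the example above both sides equal $\log 2$. Your argument breaks for three reasons:
\begin{itemize}
\item The index maximising $\|L^{n}_{\omega^{n}}(x)e_{i}\|$ varies with the word.
\item The cell sums $N^{-n}\sum_{\omega^{n}\in P_{i}}\log\|L^{n}_{\omega^{n}}(x)e_{i}\|$ are not controlled by the full averages defining $\Lambda(E_{0},\{e_{i}\})$, since $e_{i}$ may grow more slowly on words outside $P_{i}$.
\item \eqref{Lamdav1v2} only bounds $\Lambda$ of a sum from above, so it cannot supply the lower bound you need.
\end{itemize}

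You already have $\Lambda(E_{0},\{v\})\le H^{+}(x)$ for all $v$, so it suffices to find one $v$ with $\Lambda(E_{0},\{v\})\ge H^{+}(x)$. The missing idea is genericity with respect to the pathwise filtrations. Apply the classical theorem to the skew product $(\omega,x)\mapsto(\sigma\omega,T_{\omega_{1}}x)$, which preserves $\mathfrak{B}\times\mu$ for the uniform Bernoulli measure. For almost every $(\omega,x)$, the vectors of $V_{E_{0}}$ whose pathwise exponent is strictly below the top exponent of $L|_{V_{E_{0}}}$ form a proper subspace. By Fubini, Lebesgue-almost every $v\in V_{E_{0}}$ therefore attains the top pathwise exponent for $\mathfrak{B}$-almost every $\omega$.

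Turning these almost-sure statements into statements about the averages over $\Sigma_{N}^{n}$, on both sides, needs a uniform-integrability argument at fixed $x$; this is immediate when $L$ is bounded. No appeal to $\Phi^{*}$ or to the selections following Proposition \ref{Ginvariantmeasure} is needed.
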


Given a $\mu$-typical point $x \in X$, consider $E_{0} = \{ x \}$. Let $\mathcal{H}$ be an $\omega$-free measurable sub-bundle. Consider a linear map 
\[ L(x)\ \ :\ \ \mathbb{R}^{d}\ =\ V_{E_{0}}^{\perp} \oplus V_{E_{0}}\ \longrightarrow\ \mathbb{R}^{d} = V_{E_{1}}^{\perp} \oplus V_{E_{1}} \]
given by 
\begin{eqnarray} 
\label{eqn:decompLomegafee}
L (x)\ \ =\ \ \begin{pmatrix} L_{11} (x) & 0 \\ L_{21} (x) & L_{22} (x) \end{pmatrix},\ \ \ \text{where}\ \ \ L_{11} (x) & : & V_{E_{0}}^{\perp} \longrightarrow V_{E_{1}}^{\perp}, \nonumber \\ 
L_{21} (x) & : & V_{E_{0}}^{\perp} \longrightarrow V_{E_{1}}\ \ \text{and} \nonumber \\ 
L_{22} (x) & : & V_{E_{0}} \longrightarrow V_{E_{1}}. 
\end{eqnarray} 
Moreover, we have $\log^{+} \left\| L_{11}(x)^{\pm} \right\|_{{\rm op}},\ \log^{+} \left\| L_{21}(x)^{\pm} \right\|_{{\rm op}}$ and $\log^{+} \left\| L_{22}(x)^{\pm} \right\|_{{\rm op}}$ to be $\mu$-integrable, since $\log^{+} \left\| L(x)^{\pm} \right\|_{{\rm op}}$ is $\mu$-integrable, by our hypothesis in Theorem \ref{thm:oselnoomega}. Then, for $\mu$-almost every $x \in X$, taking $E_{0} = \{ x \}$, we have 
\begin{enumerate} 
\item For every $u \in V_{E_{0}}^{\perp} \setminus \{ 0 \}$ and $v \in V_{E_{0}}$, we have 
\begin{eqnarray*} 
& & \limsup_{n\, \to\, \infty} \frac{1}{n} \frac{1}{N^{n}} \sum_{\omega^{n}\, \in\, \Sigma_{N}^{n}} \log \left\| \prod\limits_{j\, =\, 0}^{n - 1} \left( L_{11} \left( T_{\pi_{j} \left( \omega^{n} \right)} x \right) \right) u \right\| \\ 
& = & \limsup_{n\, \to\, \infty} \frac{1}{n} \frac{1}{N^{n}} \sum_{\omega^{n}\, \in\, \Sigma_{N}^{n}} \log \left\| \prod\limits_{j\, =\, 0}^{n - 1} \left( L \left( T_{\pi_{j} \left( \omega^{n} \right)} x \right) \right) \left( u + v \right) \right\|. 
\end{eqnarray*} 
\item Suppose $\displaystyle{\lim_{n\, \to\, \infty} \frac{1}{n} \frac{1}{N^{n}} \sum_{\omega^{n}\, \in\, \Sigma_{N}^{n}} \log \left\| \prod\limits_{j\, =\, 0}^{n - 1} \left( L_{11} \left( T_{\pi_{j} \left( \omega^{n} \right)} x \right) \right) u \right\|}$ exists for some vector $u \in V_{E_{0}}^{\perp} \setminus \{ 0 \}$, then so does $\displaystyle{\lim_{n\, \to\, \infty} \frac{1}{n} \frac{1}{N^{n}} \sum_{\omega^{n}\, \in\, \Sigma_{N}^{n}} \log \left\| \prod\limits_{j\, =\, 0}^{n - 1} \left( L \left( T_{\pi_{j} \left( \omega^{n} \right)} x \right) \right) \left( u + v \right) \right\|}$ for every $v \in V_{E_{0}} \setminus \{ 0 \}$. Moreover, the limits are equal. 
\end{enumerate} 

One can then follow the ideas mentioned in Section \ref{partcomega}, use the results we have thus far proved in the case of the set-valued dynamics $R$ to complete the proof of part $C$ of Theorem \ref{thm:oselnoomega}.

\end{document}